\newtheorem{theorem}{Theorem}[section]
\newtheorem{lemma}[theorem]{Lemma}
\newtheorem{proposition}[theorem]{Proposition}
\newtheorem{corollary}[theorem]{Corollary}
\theoremstyle{definition}
\newtheorem{definition}[theorem]{Definition}
\newtheorem{example}[theorem]{Example}
\newtheorem{remark}[theorem]{Remark}
\newcommand{\<}{\langle}
\renewcommand{\>}{\rangle}
\newcommand{\minus}{\smallsetminus}
\newcommand{\grain}{\mathsf{G}}
\newcommand{\N}{\mathbb{N}} 
\newcommand{\Z}{\mathbb{Z}} 
\newcommand{\Q}{\mathbb{Q}}
\newcommand{\R}{\mathbb{R}} 
\newcommand{\field}{\Bbbk} 
\newcommand{\affinesemigp}{Q} 
\newcommand{\genset}{A}
\newcommand{\realize}{\R_{\geq0}} 
\newcommand{\dimd}{d} 
\newcommand{\diml}{\ell}
\newcommand{\facF}{F} 
\newcommand{\facG}{G} 
\newcommand{\facH}{H}
\newcommand{\virtual}[1]{\widehat{#1}} 
\DeclareMathOperator{\relint}{RelInt}
\newcommand{\polyhedron}{P} 
\newcommand{\polytope}{\mathcal{P}} 
\newcommand{\crosssection}{K}
\newcommand{\midI}{I} 
\newcommand{\idI}{T}
\newcommand{\maxid}{\mathfrak{m}} 
\newcommand{\module}{M} 
\newcommand{\localcoho}[3]{H_{#1}^{#2}(#3)} 
\newcommand{\setS}{S} 
\newcommand{\setT}{T}
\newcommand{\smalln}{n} 
\newcommand{\smallm}{m}
\newcommand{\smallk}{k} 
\newcommand{\smalli}{i}
\newcommand{\smalla}{a}
\newcommand{\hplane}{\mathscr{H}} 
\newcommand{\veca}{\mathbf{a}}
\newcommand{\vecb}{\mathbf{b}} 
\newcommand{\vecc}{\mathbf{c}}
\newcommand{\vecf}{\mathbf{f}} 
\newcommand{\vecg}{\mathbf{g}}
\newcommand{\vect}{\mathbf{t}}
\newcommand{\vecx}{\mathbf{x}}
\newcommand{\varx}{x} 
\newcommand{\vary}{y} 
\newcommand{\varz}{z} 
\newcommand{\vart}{t} 
\newcommand{\ishida}{L} 
\newcommand{\rchaincpx}[1]{\tilde{\mathcal{C}}(#1)}
\newcommand{\facelattice}{\mathcal{F}} 
\newcommand{\incidence}{\epsilon} 
\newcommand{\hilbt}{\mathbf{t}} 
\newcommand{\hilb}[2]{\operatorname{Hilb}(#1,#2)} 
\newcommand{\tensor}[1]{
  \mathbin{\mathop{\otimes}\displaylimits_{#1}}
}
\newcommand{\holes}{\mathcal{H}} 
\newcommand{\arngmt}{\mathcal{A}}
\newcommand{\regionr}[1]{\mathfrak{r}(#1)} 
\newcommand{\regionR}[1]{\mathfrak{R}(#1)} 
\newcommand{\regr}{\mathfrak{r}} 
\newcommand{\regR}{\mathfrak{R}}
\newcommand{\catname}[1]{{\normalfont\textbf{#1}}}
\DeclareMathOperator{\Hom}{Hom} 
\DeclareMathOperator{\Span}{Span} 
\DeclareMathOperator{\nat}{nat} 
\DeclareMathOperator{\degp}{deg.p} 
\DeclareMathOperator{\grainset}{\mathcal{G}} 
\DeclareMathOperator{\degs}{\bigcup\deg} 
\DeclareMathOperator{\degpo}{\overline{deg.p}} 
\DeclareMathOperator{\voidm}{Holes}
\DeclareMathOperator{\rest}{res} 
\DeclareMathOperator{\face}{face}
\colorlet{ignored}{gray!50}
\colorlet{idealcolor}{black}
\colorlet{stdmcolor}{blue!60}
\colorlet{idealregioncolor}{gray!20}
\begin{document}
\title{Graded local cohomology of modules over semigroup rings}

\author[Laura Felicia Matusevich]{Laura Felicia Matusevich}
\address[Laura Felicia Matusevich]{Department of Mathematics \\
Texas A\&M University \\ College Station, TX 77843.}
\email[Laura Felicia Matusevich]{matusevich@tamu.edu}
\author[Byeongsu Yu]{Byeongsu Yu}
\address[Byeongsu Yu]{Department of Mathematics \\
Texas A\&M University \\ College Station, TX 77843.}
\email[Byeongsu Yu]{byeongsu.yu@tamu.edu}

\date{}

\begin{abstract}
We give a combinatorial description of local cohomology modules of a
graded module over a semigroup ring, with support
at the graded maximal ideal. This combinatorial framework yields
Hochster-type formulas for the Hilbert series of such local cohomology
modules in terms of the homology of finitely many polyhedral cell
complexes. A Cohen--Macaulay criterion immediately follows. We also
provide an alternative proof of a result of~\cite{MR857437}
characterizing Cohen--Macaulay affine semigroup rings. 
\end{abstract}

\subjclass[2020]{Primary 13D45, 13F65, 05E40, 20M25; Secondary 13C14, 13F55, 14M25, 52B20.}
  
\keywords{}

\maketitle

\section{Introduction}
\label{sec:intro}

An affine semigroup ring is a subalgebra of the Laurent polynomial
ring (over a field) that is finitely generated by monomials. Such
rings are the coordinate rings of affine toric varieties, and have
received much attention for this reason.

Local cohomology is a fundamental notion in homological commutative
algebra. In this article, we make a detailed study of the graded
structure of local cohomology (supported at the graded maximal ideal) 
for modules over affine semigroup rings. In this case, one computes
local cohomology using the \emph{Ishida complex}, which takes into
account the combinatorics of the underlying affine
semigroup.

For a given graded module (over an affine semigroup ring), we need to study all localizations by
monomials. To do this, we generalize a tool introduced in the context of monomial
ideals in polynomial rings, namely the standard pairs of Sturmfels,
Trung and Vogel~\cites{STV95,STDPAIR}. In the general
context of graded modules, we use the name \emph{degree pairs}. These pairs 
organize the supporting degrees of a module according to the faces of the
underlying semigroup. 
Since localization is also controlled by 
faces, the degree pairs naturally control the information associated
to localization.

Putting all degree pairs of all localizations together, and suitably
topologizing, we can partition the relevant supporting degrees that
appear in the Ishida complex. The parts consist of lattice points in
carefully constructed polyhedra. Understanding these polyhedra (and
associated polyhedral cell complexes) we can classify the graded
pieces of the Ishida complex.

The key point of these constructions is that there are only \emph{finitely
many} polyhedra involved. In particular, we may follow foundational
ideas from Stanley--Reisner rings, and write Hilbert series for local
cohomology as finite sums involving lattice point generating
functions. When the semigroup is pointed, we write these Hilbert
series as finite sums of rational functions, thus obtaining
Hochster-type formulas in this case. Cohen--Macaulayness
criteria directly follow. 

Our tools also provide alternative proofs of celebrated results for
affine semigroup rings, namely Hochster's theorem that normal affine
semigroup rings are Cohen--Macaulay, and the Cohen--Macaulayness
criterion for affine semigroup rings by Trung and Hoa~\cite{MR857437}.

\subsection*{Outline}
Polyhedral geometry, affine semigroups, hyperplane arrangements, and the Ishida complex are discussed in~\cref{sec:preliminary}.~\cref{sec:stdpairs} introduces degree pairs and develops an injective map from the set of overlap classes of degree pairs of a localization to that of an original module.~\cref{sec:stdpair_top} introduces the concept of \emph{degree space}, which is composed of all degrees of nonzero elements in any localizations of an affine semigroup. The degree space is endowed with a special topology called \emph{degree pair topology}. ~\cref{sec:Hochster} derives a Hochster-type formula for the local cohomology from the degree pair topology and proposes combinatorial Cohen--Macaulay criteria for quotients of affine semigroup rings by monomial ideals. Finally,~\cref{sec:Cohen--Macaulay} proves the combinatorial Cohen–Macaulay condition for affine semigroup rings in a different way.

\subsection*{Acknowledgments}
We are grateful to Aida Maraj, Aleksandra Sobieska, Alexander Yong, Bernd Siebert, Catherine Yan, Christine Berkesch, Christopher Eur, Erika Ordog, Ezra Miller, Frank Sotille, Galen Dorpalen-Barry, Heather Harrington, Jaeho Shin, Jennifer Kenkel, Jonathan Monta{\~n}o, Joseph Gubeladze, Kenny Easwaran, Mahrud Sayrafi, Mateusz Michalek, Melvin Hochster, Patricia Klein, Sarah Witherspoon, Semin Yoo, Serkan Ho{\c s}ten, Yupeng Li for inspiring conversations we had while working on this project. 

\subsection*{Notation}
We adopt the convention that $\N=\{ 0,1,2, \cdots\}$ is the set of nonnegative integers, $\field$ is an arbitrary infinite field, and $\R$ is a field of real numbers. Throughout this article, $a,b,c,\cdots$ refer to integers, while $\veca,\vecb,\vecc,\cdots$ refer to integer vectors. For any set $\setS$, $\setS^{c}$ is the complement of the set. Given a Laurent polynomial ring $\field[t_{1}^{\pm1},t_{2}^{\pm1},\cdots, t_{n}^{\pm1}]$, let $\hilbt^{\veca}:= \vart_{1}^{\veca_{1}}\vart_{2}^{\veca_{2}} \cdots \vart_{n}^{\veca_{n}}$. When the context is obvious, $\varx, \vary$ and $\varz$ denote, respectively, $\vart_{1},\vart_{2},$ and $\vart_{3}$.

\section{Preliminaries}
\label{sec:preliminary}

\subsection{Polyhedral geometry}
\label{subsec:polytope}

A \emph{polyhedron} is an intersection of finitely many
closed halfspaces in $\R^{\dimd}$. A \emph{convex polytope} is a
bounded polyhedron. Equivalently, a convex polytope is the convex hull
of a finite set of points in $\R^{\dimd}$~\cite{Ziegler95}*{Theorem
  1.1}.
A \emph{polyhedral cone} is a polyhedron $\polytope$ which is also a
cone, meaning that $\lambda \vecx \in \polytope$ for all nonnegative
real $\lambda$ and all $\vecx \in \polytope$.

A nonempty subset $\facF$ of a polyhedron $\polytope$ in $\R^\dimd$ is called a
\emph{face} of $\polytope$ if there exists $\vecc\in\R^{\dimd}$ such
that $F$ is the set where the dot product with $\vecc$ over
$\polytope$ achieves its maximum value. 
In other words, 
$\facF = \face_{\vecc}(\polytope):=\{ \veca \in \polytope\mid \vecc \cdot \veca
\geq \< \vecc,\vecx \> \text{ for all } \vecx \in \polytope\}$. 
The faces of a polyhedron are also polyhedra. A face of $\polytope$ is
\emph{proper} if it is a proper subset of $\polytope$. By convention,
$\varnothing$ is a face of every polyhedron.

The \emph{affine hull} of a finite set \
$\veca_{1},\veca_{2},\cdots, \veca_{m} \in \R^\dimd$ is the set of all real linear
combinations $\sum_{i=1}^{m}\lambda_{i}\veca_{i}$ 
for which $\sum_{i=1}^{m}\lambda_{i}=1$. The \emph{relative interior} $\relint(\polytope)$ of a
polyhedron $\polytope$ is the interior of $\polytope$ with respect to its
affine hull. If $\facF$ is a face of $\polytope$ then $\relint(\facF)$
can also be described as the set of all points in $\facF$ that do not
lie in any other proper face of $\polytope$.
The dimension of a nonempty polyhedron is defined to be the dimension of its
affine hull; we set $\dim(\varnothing) = -1$.
A polyhedron is \emph{pointed} if
it has a unique zero-dimensional face.  It is easy to see that a
pointed polyhedron whose unique zero-dimensional face is the origin is
a (pointed) polyhedral cone.

The collection  $\facelattice(\polytope)$  of all faces in $\polytope$ is called
the \emph{face lattice of $\polytope$}. It is a lattice with respect to
the partial order given by inclusion.
Two polyhedra are \emph{combinatorially
  equivalent} if their face lattices are order-isomorphic.
If $\polytope$ is a pointed
polyhedral cone, then there exists a convex
polytope whose face lattice is order-isomorphic to
$\facelattice(\polytope) \smallsetminus \{ \varnothing
\}$~\cite{Ziegler95}*{Proposition 1.12, Exercise 2.19}.

A \emph{polyhedral complex} $\Delta$ is a collection of polyhedra
satisfying
\begin{itemize}
  \item for $\facF \in \Delta$ and $\facG \in \facelattice(\facF)$, we
    have $\facG \in \Delta$, and
  \item for $\facF, \facG \in \Delta$, $\facF\cap\facG$ is a common
      face of both $\facF$ and $\facG$.
\end{itemize}
    The face lattice of a polyhedron is an
example of a polyhedral complex. Any simplicial complex on $n$
vertices can be realized as a polyhedral subcomplex of the simplex
obtained by taking the convex hull of the standard basis vectors in $\R^n$.

\subsection{Affine semigroups}
\label{subsec:affinesemigp}

An \emph{affine semigroup} is a finitely generated submonoid of
$\Z^\dimd$. Throughout this article, we denote $\genset=\{ \veca_{1},\dots,
\veca_{n}\} \subset \Z^{\dimd} \smallsetminus \{ 0\}$ and work with
the affine monoid $\affinesemigp = \N\genset$ consisting of all nonnegative linear
combinations of the elements of $\genset$. Where it causes no
confusion, we also denote by $\genset$ the $d \times n$ integer matrix
with columns $\veca_{1},\dots, \veca_{n}$. The set
$\realize{\affinesemigp} = \realize{\genset}$ of nonnegative real
combinations of elements of $\affinesemigp$ (or $\genset$) is a
polyhedral cone, called the \emph{underlying cone of $\affinesemigp$}.
The \emph{dimension} of an affine semigroup $\affinesemigp$
is defined to be dimension of its underlying cone.

A subset
$\idI$ of an affine semigroup $\affinesemigp$ is called an \emph{ideal} if
$\affinesemigp+\idI \subseteq \idI$.
For any subset $\setS$ of
$\affinesemigp$, the ideal $\< \setS\>$ generated by $\setS$ is the
smallest ideal in $\affinesemigp$ that contains $\setS$. An ideal
$\idI$ is \emph{prime} if for any two elements $\veca,\vecb \in
\affinesemigp$, $\veca+\vecb \in \idI$ implies $\veca \in \idI$ or
$\vecb \in \idI$. A subset of $Q$ is called a \emph{face} of $Q$ if
its complement is a prime ideal; the collection of faces of
$\affinesemigp$ is denoted by $\facelattice(\affinesemigp)$.
It can be shown~\cite{CCA}*{Lemma~7.12} that there is a one to one
correspondence between $\facelattice(\realize{\affinesemigp})$ and
$\facelattice(\affinesemigp)$, given by intersecting the faces of
$\realize{\affinesemigp}$ with $\affinesemigp$.
The \emph{relative interior} of a face of $\affinesemigp$ is defined
to be the intersection of the relative interior of the corresponding
face of $\realize{\affinesemigp}$ with $\affinesemigp$.
If the context is clear, we may abuse notation and refer to a subset $\facF
\subset \genset$ as a face of $\N\genset$, to indicate that $\N\facF$ is a
face of $\N\genset$.

The set $\holes(\affinesemigp) := \left(\Z\affinesemigp \cap
  \realize\affinesemigp\right) \smallsetminus \affinesemigp$ is called
the set of \emph{holes} of $\affinesemigp$. Here $\Z\affinesemigp$
denotes the set of integer combinations of the elements of $\affinesemigp$.
If an affine semigroup contains no holes, it is said
to be \emph{normal}.

An \emph{affine semigroup ring}
$\field[\affinesemigp]=\field[\vart^{\veca_{1}},\cdots,
\vart^{\veca_{n}}]$ is a subring of the Laurent polynomial ring
$\field[\hilbt^{\pm}] =\field[\vart_{1}^{\pm1},\cdots,
\vart_{d}^{\pm1}]$. There is a natural bijection between the elements
of an affine semigroup $\affinesemigp$ and the monomials of the
corresponding affine semigroup ring $\field[\affinesemigp]$. This
establishes a one to one correspondence between monomial ideals of
$\field[\affinesemigp]$ and ideals of $\affinesemigp$. If $\idI$ is an
ideal of $\affinesemigp$, we denote the corresponding monomial ideal of
$\field[\affinesemigp]$ by $\midI$; more precisely, $\midI= \<
\varx^{\veca}\mid \veca \in \idI \>.$ If $F$ is a face of $Q$, the
ideal $P_F := \< t^\veca \mid \veca \notin F\>$ is a corresponding to
the complement of $F$ is a prime monomial ideal of
$\field[\affinesemigp]$; all prime monomial ideals of
$\field[\affinesemigp]$ arise in this way.

A set $\setS \subseteq \affinesemigp$ is called \emph{additively
  closed} if it contains $0$ and is closed under addition. The
\emph{localization} $\affinesemigp-\N\setS$ of $\affinesemigp$ by an
additively closed set $\setS$ is defined as
$\affinesemigp-\N\setS:=\affinesemigp + \Z \setS$. The localization of
$\affinesemigp$ by $\setS$ is equal to the localization of
$\affinesemigp$ by the minimal additively closed set containing
$\setS$ whose complement is a prime ideal~\cite{CHWW15}*{Lemma~1.1}.  

\begin{lemma}[\cite{CHWW15}*{Lemma 1.1}]
\label{lem:localization_proposition}
Let $\setS \subseteq \field[\affinesemigp]$ be a set of monomials that
is multiplicatively closed and let $\N\facF$ be the minimal face of $\affinesemigp$ containing $\{ \veca \in \affinesemigp\mid x^{\veca} \in \setS \}$. Then,
\[
\setS^{-1}\field[\affinesemigp] \cong \field[\affinesemigp - \N\facF].
\]
\end{lemma}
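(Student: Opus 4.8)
The plan is to split the lemma into a ring-theoretic reduction and a purely combinatorial identity about the monoid $\affinesemigp$, and then to establish that identity using the conductor of $\affinesemigp$. First I would set $R=\field[\affinesemigp]\subseteq L=\field[\hilbt^{\pm}]$ and $\setT=\{\veca\in\affinesemigp\mid\hilbt^{\veca}\in\setS\}$; since $\setS$ is multiplicatively closed, $\setT$ is closed under addition, and as inverting $\setS$ agrees with inverting $\setS\cup\{1\}$ I may assume $0\in\setT$, so that $\setT$ is a submonoid of $\affinesemigp$. Every monomial of $L$ is a unit, so $\setS^{-1}R$ is a subring of $L$, spanned as a $\field$-vector space by the monomials $\hilbt^{\veca}/\hilbt^{\vect}=\hilbt^{\veca-\vect}$ (with $\veca\in\affinesemigp$, $\vect\in\setT$), whose exponents range over $\affinesemigp+\Z\setT$. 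Distinct Laurent monomials being $\field$-linearly independent, it follows that $\setS^{-1}R=\field[\affinesemigp+\Z\setT]$ as subrings of $L$, so the lemma reduces to the monoid identity $\affinesemigp+\Z\setT=\affinesemigp+\Z\facF$, where $\N\facF$ denotes the minimal face of $\affinesemigp$ containing $\setT$ (this face exists because the faces of $\affinesemigp$ are finite in number and closed under intersection).

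For the identity, the inclusion $\affinesemigp+\Z\setT\subseteq\affinesemigp+\Z\facF$ is immediate from $\setT\subseteq\N\facF$. For the reverse inclusion, since $\affinesemigp+\Z\facF=\affinesemigp+(\N\facF-\N\facF)$, it suffices to show each $\hilbt^{\vecb}$ with $\vecb\in\N\facF$ is invertible in $\setS^{-1}R$, i.e.\ that for each $\vecb\in\N\facF$ there is $\vect\in\setT$ with $\vect-\vecb\in\affinesemigp$. Two reductions: writing a general element of $\N\facF$ as an $\N$-combination of a finite generating set, I may take $\vecb$ to be a generator; and since $\vect\in\N\facF$ together with $\vect-\vecb\in\affinesemigp$ forces $\vect-\vecb\in\N\facF$ (elements of $\affinesemigp$ summing to a member of a face themselves lie in that face), the statement does not change upon replacing $\affinesemigp$ by $\N\facF$, and, faces of faces being faces, I may therefore assume the minimal face of $\affinesemigp$ containing $\setT$ is $\affinesemigp$ itself — equivalently, that no facet normal of the cone $\realize\affinesemigp$ (taken inside the linear span of $\affinesemigp$) vanishes identically on $\setT$.

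The crux, and the only point where non-normality of $\affinesemigp$ enters, is the remaining step: given a generator $\veca$, produce $\vect\in\setT$ with $\vect-\veca\in\affinesemigp$, where it will \emph{not} be enough to know that $\vect-\veca$ lies in the cone and in the lattice. I would use a \emph{conductor} element $\vecc^{*}\in\affinesemigp$ with $\vecc^{*}+\bigl(\Z\affinesemigp\cap\realize\affinesemigp\bigr)\subseteq\affinesemigp$, which exists because the normalization $\Z\affinesemigp\cap\realize\affinesemigp=\affinesemigp\cup\holes(\affinesemigp)$ is a finitely generated $\affinesemigp$-module, say $\bigcup_{k}(\vech_{k}+\affinesemigp)$ with finitely many $\vech_{k}$: pick $\vecq_{k}\in\affinesemigp$ with $\vech_{k}+\vecq_{k}\in\affinesemigp$ (possible since $\vech_{k}\in\Z\affinesemigp=\affinesemigp-\affinesemigp$) and take $\vecc^{*}=\sum_{k}\vecq_{k}$. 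Now, using the reduction above, for each facet normal $\vecc$ of $\realize\affinesemigp$ choose $\vect_{\vecc}\in\setT$ with $\vecc\cdot\vect_{\vecc}\geq1$, and put $\vect=N\sum_{\vecc}\vect_{\vecc}\in\setT$ with $N\geq\max_{\vecc}\vecc\cdot(\veca+\vecc^{*})$; then $\vecc\cdot\vect\geq N$, so $\vecc\cdot(\vect-\veca-\vecc^{*})\geq0$ for every facet normal $\vecc$, whence $\vect-\veca-\vecc^{*}$ lies in $\Z\affinesemigp$ and satisfies all the facet inequalities of $\realize\affinesemigp$, i.e.\ lies in $\Z\affinesemigp\cap\realize\affinesemigp$. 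Therefore $\vect-\veca=\vecc^{*}+(\vect-\veca-\vecc^{*})\in\affinesemigp$, which completes the proof. (When $\affinesemigp$ is normal one takes $\vecc^{*}=0$ and this last argument is immediate; the conductor is exactly what salvages it in general.)
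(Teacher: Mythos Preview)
Your argument is correct. Note, however, that the paper does not actually supply its own proof of this lemma: it is quoted directly from \cite{CHWW15}*{Lemma~1.1} and used as a black box. There is therefore no ``paper's proof'' to compare against here.

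That said, your write-up is a clean self-contained verification. The reduction to the monoid identity $\affinesemigp+\Z\setT=\affinesemigp+\Z\facF$ via the embedding into the Laurent polynomial ring is the natural first move, and your two successive reductions (to generators of $\N\facF$, then to the case where the minimal face containing $\setT$ is all of $\affinesemigp$) are sound; the face argument showing $\vect-\vecb\in\N\facF$ whenever $\vect\in\N\facF$ and $\vect-\vecb\in\affinesemigp$ is exactly the primeness of the complement of a face. The conductor step is the right way to handle the non-normal case: your construction of $\vecc^{*}$ from a finite module generating set of the normalization is standard, and the final inequality argument with $N\sum_{\vecc}\vect_{\vecc}$ correctly lands $\vect-\veca-\vecc^{*}$ in $\Z\affinesemigp\cap\realize\affinesemigp$, after which the conductor pushes $\vect-\veca$ into $\affinesemigp$. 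One small remark: when you take facet normals ``inside the linear span of $\affinesemigp$'', you are implicitly working modulo the lineality space of $\realize\affinesemigp$ if $\affinesemigp$ is not pointed; this is harmless since both the conductor and the facet inequalities behave well under that quotient, but it is worth a sentence if you write this up formally.
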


If $T \subset Q$ is an ideal corresponding to the monomial ideal
$\midI$ in $\field[\affinesemigp]$, and $F$ is a face of $Q$, the
localization of $I$ at the prime ideal $P_F$, denoted $\midI_F$
corresponds to the ideal $\idI_{\facF}:= \idI -\N\facF$ of the semigroup $\affinesemigp-\N\facF$.

A graded module $\module = \oplus_{\veca \in \Z^d} \module_\veca$ is
\emph{finely graded} if $\dim_{\field}\module_{\veca} \leq 1$ for all
degrees $\veca \in \Z^d$. For any face $\facF \in
\facelattice(\affinesemigp)$, $\field[\affinesemigp-\N\facF]$ is
\emph{finely} $\Z^{d}$-graded as follows. 

\begin{lemma} 
\label{lem:Zd-graded_property_of_affine_semigroup_ring}
$\dim_{\field}( \field[\affinesemigp-\N\facF])_{\veca}=1$ if $\veca \in \affinesemigp - \N\facF$. Otherwise, $\dim_{\field}( \field[\affinesemigp-\N\facF])_{\veca}=0$.
\end{lemma}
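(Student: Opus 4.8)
The statement to prove is Lemma~\ref{lem:Zd-graded_property_of_affine_semigroup_ring}: that $\dim_\field(\field[\affinesemigp-\N\facF])_\veca = 1$ if $\veca \in \affinesemigp - \N\facF$ and $0$ otherwise. This should be a short, essentially formal argument.

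\textbf{Plan.} The ring $\field[\affinesemigp - \N\facF]$ sits inside the Laurent polynomial ring $\field[\hilbt^{\pm}]$, and by definition it is the $\field$-span of the monomials $\hilbt^\vecb$ with $\vecb \in \affinesemigp - \N\facF = \affinesemigp + \Z\facF$. The key point is that distinct monomials $\hilbt^\vecb$ are $\field$-linearly independent in the Laurent polynomial ring, so the set $\{\hilbt^\vecb : \vecb \in \affinesemigp - \N\facF\}$ is a $\field$-basis of $\field[\affinesemigp - \N\facF]$ that is homogeneous with respect to the $\Z^d$-grading (each $\hilbt^\vecb$ sitting in degree $\vecb$). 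Hence the graded piece in degree $\veca$ is spanned by $\hilbt^\veca$ if $\veca \in \affinesemigp - \N\facF$, giving dimension $1$, and is $\{0\}$ otherwise, giving dimension $0$.

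\textbf{Steps.} First I would recall that $\affinesemigp - \N\facF$ is an affine semigroup (it is generated by $\genset$ together with $\facF$ and $-\facF$, hence finitely generated, and it is a submonoid of $\Z^d$), so that $\field[\affinesemigp - \N\facF]$ is itself an affine semigroup ring and the remarks in Subsection~\ref{subsec:affinesemigp} about the bijection between semigroup elements and monomials apply: the monomials $\hilbt^\vecb$, $\vecb \in \affinesemigp - \N\facF$, are pairwise distinct. Second, I would invoke the standard fact that the monomials $\{\hilbt^\vecb : \vecb \in \Z^d\}$ form a $\field$-basis of $\field[\hilbt^{\pm}]$, so any subset of them is linearly independent. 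Third, since $\hilbt^\vecb$ has $\Z^d$-degree exactly $\vecb$, the decomposition $\field[\affinesemigp - \N\facF] = \bigoplus_{\vecb \in \affinesemigp - \N\facF} \field\cdot \hilbt^\vecb$ is precisely the $\Z^d$-graded decomposition, and reading off the degree-$\veca$ component yields the claim. This also confirms, as asserted in the sentence preceding the lemma, that $\field[\affinesemigp - \N\facF]$ is finely graded.

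\textbf{Main obstacle.} There is essentially no obstacle here; the only thing to be careful about is the bookkeeping that ``$\veca \in \affinesemigp - \N\facF$'' is genuinely the condition for $\hilbt^\veca$ to lie in the ring, which is immediate from the definition $\affinesemigp - \N\facF = \affinesemigp + \Z\facF$ and Lemma~\ref{lem:localization_proposition} identifying this with the localization. If one wants to be scrupulous, one notes that no nontrivial $\field$-linear relation among the $\hilbt^\vecb$ can exist precisely because the exponent vectors are distinct elements of $\Z^d$, which is where the ambient Laurent polynomial ring does all the work.
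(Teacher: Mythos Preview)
Your proposal is correct and follows essentially the same approach as the paper: both embed $\field[\affinesemigp-\N\facF]$ into a larger ring where the monomials are manifestly a $\field$-basis (the paper uses $\field[\Z\affinesemigp]$, you use the full Laurent ring $\field[\hilbt^{\pm}]$) and read off the graded pieces. The paper's write-up is more terse, simply noting that it suffices to bound $\dim_\field(\field[\Z\affinesemigp])_\veca \le 1$, but the content is the same.
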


\begin{proof}
As $\field[\affinesemigp - \N\facF] \subseteq
\field[\Z\affinesemigp]$, it suffices to show that $\dim_{\field}(
\field[\Z\affinesemigp])_{\veca} \leq 1$. If $\veca \not\in
\Z\affinesemigp$, then $\field[\Z\affinesemigp]_{\veca} = \{ 0 \}$,
otherwise $\field[\Z\affinesemigp]_{\veca} = \Span_\field(t^\veca)$.
\end{proof}

An affine semigroup $\affinesemigp$ is \emph{pointed} if its corresponding cone
$\realize{\affinesemigp}$ is a pointed polyhedron.

\begin{example}
\label{ex:affine_semigp}
\
\begin{enumerate}[leftmargin=*]
\item \label{enum:frob_affine_semigp} (Monomial curves)
Let $\affinesemigp=\N\genset$ with $\genset=\left[\begin{smallmatrix}
    1 & 1 & \cdots & 1 \\ 0 & a_{1}  & \cdots &
    a_{\smalln-1}\end{smallmatrix}\right]$ such that $0<a_{1}< \cdots
< a_{\smalln-1}$ are relatively prime integers. If $0,
a_1,a_2,\dots,a_{n-1}$ are consecutive integers, then
$\field[\affinesemigp]$ is the coordinate ring of a rational normal
curve; otherwise, $\field[\affinesemigp]$ is not normal. 

Figure~\ref{fig:frob_coin} illustrates the example where $a_1=1, a_2=3,
a_3=4$. Elements of the semigroup $Q$ are represented by filled dots. Since
$\left[\begin{smallmatrix} 1 \\ 2 \end{smallmatrix}\right]$ is a hole
of $Q$ (in fact, it is the only hole of $Q$), it is depicted as an empty circle.
Let $\idI=\left\<\left[\begin{smallmatrix} 1 \\
      1 \end{smallmatrix}\right]\right\>$. The elements of $\idI$ are
colored black, while the elements in $Q$ but not in $\idI$ are colored
blue. This includes $\left[\begin{smallmatrix} 2 \\
    2 \end{smallmatrix}\right]$, which is not in $\idI$ because 
$\left[\begin{smallmatrix} 1 \\ 2 \end{smallmatrix}\right]$ is a hole
of $Q$.
\item 
\label{enum:3d_affine_semigp} (Segre embedding of $\mathbb{P}^{1} \times \mathbb{P}^{1}$)
Let $\affinesemigp=\N\genset$ with $A = \left[\begin{smallmatrix}0 & 1
    & 1 & 0 \\ 0 & 0 & 1 & 1 \\ 1 & 1 & 1 &
    1 \end{smallmatrix}\right]$. The affine semigroup ring
$\field[\affinesemigp]$ is isomorphic to $\field[z,xz,yz,xyz] \cong
\field[a,b,c,d]/\< ac-bd \>$. The exponent vectors of monomials in the
ideal $\midI=\< x^2 z^2, x^2yz^2, x^2y^3z^3,$ $x^3y^3z^3 \> \subset
\field[\affinesemigp]$ are depicted by black dots in~\cref{fig:3d}. 
\end{enumerate}
\end{example}

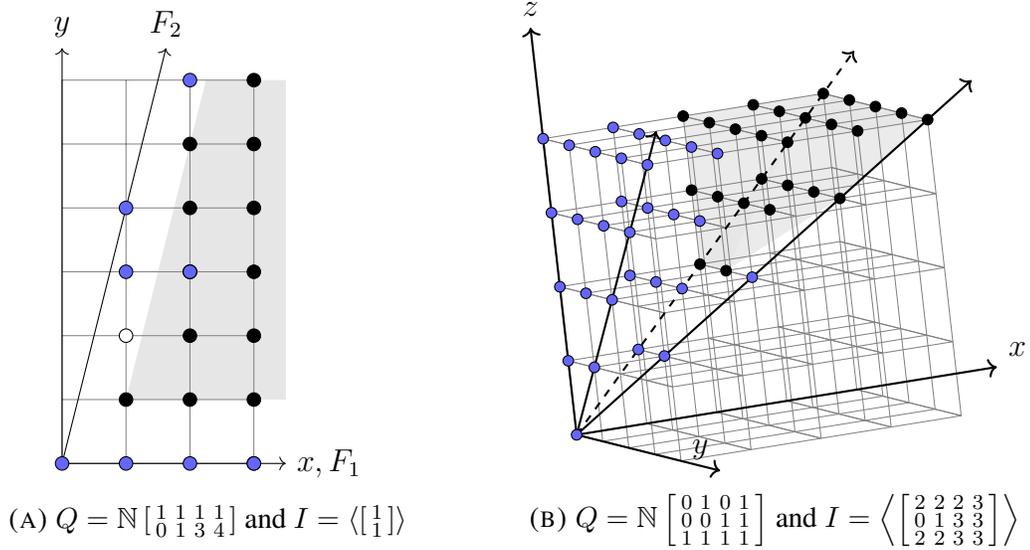
\begin{figure*}[t!]
\centering
\begin{subfigure}[c]{0.45\linewidth}
\centering
\begin{tikzpicture}[scale=0.85]
\fill[idealregioncolor] (1,1) -- (2.25,6) -- (3.5,6) -- (3.5,1) -- cycle ;

  \draw[step=1cm,gray,very thin] (0,0) grid (3,6);
  \draw [<->] (0,6.5) node (yaxis) [above] {$y$}
        |- (3.5,0) node (xaxis) [right] {$x,\facF_{1}$};
\draw[->](0,0) -- (1.625,6.5);

\node[above] at (1.625,6.5) {$\facF_{2}$};
\draw[black,fill=stdmcolor] (0,0) circle (3pt);

\draw[black,fill=stdmcolor] (2,0) circle (3pt);
\draw[black,fill=stdmcolor] (3,0) circle (3pt);
\draw[black,fill=stdmcolor] (3,0) circle (3pt);
\draw[black,fill=stdmcolor] (2,6) circle (3pt);

\foreach \y in {0,3,4}
\draw[black,fill=stdmcolor] (1,\y) circle (3pt);

\foreach \y in {0,3,4,5}
\draw[black,fill=stdmcolor] (2,3) circle (3pt);
\draw[black,fill=white] (1,2) circle (3pt);

\draw[black,fill=idealcolor] (3,6) circle (3pt);
\draw[black,fill=idealcolor] (1,1) circle (3pt);
\foreach \x in {2}
\foreach \y in {1,2,4,5}
\draw[black,fill=idealcolor] (\x,\y) circle (3pt);

\foreach \x in {3}
\foreach \y in {1,...,5}
\draw[black,fill=idealcolor] (\x,\y) circle (3pt);

\end{tikzpicture}
\caption{$Q = \N \left[\begin{smallmatrix}1 & 1 & 1 & 1 \\ 0 & 1 & 3 & 4  \end{smallmatrix}\right]$ and $I=\left\< \left[\begin{smallmatrix} 1 \\ 1 \end{smallmatrix}\right]\right\>$}
\label{fig:frob_coin}
\end{subfigure}
\begin{subfigure}[c]{0.45\linewidth}
        \centering
	 \tdplotsetmaincoords{90}{90} 
        \begin{tikzpicture}[tdplot_main_coords, scale=1]
	\tdplotsetrotatedcoords{40}{-10}{30}
	\fill[idealregioncolor, fill opacity=0.8,tdplot_rotated_coords] (2,0,2) -- (2,1,2) -- (3,3,3) -- (4,4,4) -- (4,0,4) -- cycle;
	\fill[idealregioncolor, fill opacity=0.8,tdplot_rotated_coords] (2,0,2) -- (2,1,2) -- (2,4,4) -- (2,0,4) -- cycle;
	\fill[idealregioncolor, fill opacity=0.8,tdplot_rotated_coords] (2,4,4) -- (2,0,4) -- (4,0,4) -- (4,4,4)-- cycle;

	\foreach \x in {0,...,4}
	{
		\draw[step=1cm,gray,very thin, tdplot_rotated_coords] (\x,0,0) -- (\x,4,0);
		\draw[step=1cm,gray,very thin, tdplot_rotated_coords] (0,\x,0) -- (4,\x,0);
		\draw[step=1cm,gray,very thin, tdplot_rotated_coords] (\x,0,1) -- (\x,4,1);
		\draw[step=1cm,gray,very thin, tdplot_rotated_coords] (0,\x,1) -- (4,\x,1);
		\draw[step=1cm,gray,very thin, tdplot_rotated_coords] (\x,0,2) -- (\x,4,2);
		\draw[step=1cm,gray,very thin, tdplot_rotated_coords] (0,\x,2) -- (4,\x,2);
		\draw[step=1cm,gray,very thin, tdplot_rotated_coords] (\x,0,3) -- (\x,4,3);
		\draw[step=1cm,gray,very thin, tdplot_rotated_coords] (0,\x,3) -- (4,\x,3);
		\draw[step=1cm,gray,very thin, tdplot_rotated_coords] (\x,0,4) -- (\x,4,4);
		\draw[step=1cm,gray,very thin, tdplot_rotated_coords] (0,\x,4) -- (4,\x,4);

		\draw[step=1cm,gray,very thin, tdplot_rotated_coords] (0,\x,0) -- (0,\x,4);
		\draw[step=1cm,gray,very thin, tdplot_rotated_coords] (0,0,\x) -- (0,4,\x);
		\draw[step=1cm,gray,very thin, tdplot_rotated_coords] (1,\x,0) -- (1,\x,4);
		\draw[step=1cm,gray,very thin, tdplot_rotated_coords] (1,0,\x) -- (1,4,\x);
		\draw[step=1cm,gray,very thin, tdplot_rotated_coords] (2,\x,0) -- (2,\x,4);
		\draw[step=1cm,gray,very thin, tdplot_rotated_coords] (2,0,\x) -- (2,4,\x);
		\draw[step=1cm,gray,very thin, tdplot_rotated_coords] (3,\x,0) -- (3,\x,4);
		\draw[step=1cm,gray,very thin, tdplot_rotated_coords] (3,0,\x) -- (3,4,\x);
		\draw[step=1cm,gray,very thin, tdplot_rotated_coords] (4,\x,0) -- (4,\x,4);
		\draw[step=1cm,gray,very thin, tdplot_rotated_coords] (4,0,\x) -- (4,4,\x);
	};
	
	\draw[thick,->,tdplot_rotated_coords] (0,0,0) -- (6,0,0) node[anchor=south west]{$x$}; 
	\draw[thick,->,tdplot_rotated_coords] (0,0,0) -- (0,5.5,0) node[anchor=south east]{$y$}; 
	\draw[thick,->,tdplot_rotated_coords] (0,0,0) -- (0,0,5.5) node[anchor=south]{$z$};
	\draw[thick,->,tdplot_rotated_coords] (0,0,0) -- (0,4.5,4.5);
	\draw[thick,dashed,->,tdplot_rotated_coords] (0,0,0) -- (4.5,0,4.5);
	\draw[thick, ->,tdplot_rotated_coords] (0,0,0) -- (4.5,4.5,4.5);

	\foreach \y in {0,...,4}
	\draw[black,fill=stdmcolor,tdplot_rotated_coords] (0,0,\y) circle (2 pt);
	\draw[black,fill=stdmcolor,tdplot_rotated_coords] (0,1,1) circle (2 pt);
	\draw[black,fill=stdmcolor,tdplot_rotated_coords] (0,1,2) circle (2 pt);
	\draw[black,fill=stdmcolor,tdplot_rotated_coords] (0,2,2) circle (2 pt);
	\draw[black,fill=stdmcolor,tdplot_rotated_coords] (0,1,3) circle (2 pt);
	\draw[black,fill=stdmcolor,tdplot_rotated_coords] (0,2,3) circle (2 pt);
	\draw[black,fill=stdmcolor,tdplot_rotated_coords] (0,3,3) circle (2 pt);
	\draw[black,fill=stdmcolor,tdplot_rotated_coords] (0,1,4) circle (2 pt);
	\draw[black,fill=stdmcolor,tdplot_rotated_coords] (0,2,4) circle (2 pt);
	\draw[black,fill=stdmcolor,tdplot_rotated_coords] (0,3,4) circle (2 pt);
	\draw[black,fill=stdmcolor,tdplot_rotated_coords] (0,4,4) circle (2 pt);

	\foreach \y in {0,...,3}
	\draw[black,fill=stdmcolor,tdplot_rotated_coords] (1,0,1+\y) circle (2 pt);

	\foreach \y in {0,...,3}
	\draw[black,fill=stdmcolor,tdplot_rotated_coords] (1,1+\y,1+\y) circle (2 pt);
	
	\draw[black,fill=stdmcolor,tdplot_rotated_coords] (2,2,2) circle (2 pt);

	\draw[black,fill=stdmcolor,tdplot_rotated_coords] (1,1,2) circle (2 pt);
	\draw[black,fill=stdmcolor,tdplot_rotated_coords] (1,1,3) circle (2 pt);
	\draw[black,fill=stdmcolor,tdplot_rotated_coords] (1,2,3) circle (2 pt);
	\draw[black,fill=stdmcolor,tdplot_rotated_coords] (1,1,4) circle (2 pt);
	\draw[black,fill=stdmcolor,tdplot_rotated_coords] (1,2,4) circle (2 pt);
	\draw[black,fill=stdmcolor,tdplot_rotated_coords] (1,3,4) circle (2 pt);

	\draw[black,fill=idealcolor,tdplot_rotated_coords] (2,0,2) circle (2 pt);
	\draw[black,fill=idealcolor,tdplot_rotated_coords] (2,1,2) circle (2 pt);
	
	\draw[black,fill=idealcolor,tdplot_rotated_coords] (2,0,3) circle (2 pt);
	\draw[black,fill=idealcolor,tdplot_rotated_coords] (2,1,3) circle (2 pt);
	\draw[black,fill=idealcolor,tdplot_rotated_coords] (2,2,3) circle (2 pt);
	\draw[black,fill=idealcolor,tdplot_rotated_coords] (2,3,3) circle (2 pt);
	\draw[black,fill=idealcolor,tdplot_rotated_coords] (3,0,3) circle (2 pt);
	\draw[black,fill=idealcolor,tdplot_rotated_coords] (3,1,3) circle (2 pt);
	\draw[black,fill=idealcolor,tdplot_rotated_coords] (3,2,3) circle (2 pt);
	\draw[black,fill=idealcolor,tdplot_rotated_coords] (3,3,3) circle (2 pt);
	\draw[black,fill=idealcolor,tdplot_rotated_coords] (2,0,4) circle (2 pt);
	\draw[black,fill=idealcolor,tdplot_rotated_coords] (2,1,4) circle (2 pt);
	\draw[black,fill=idealcolor,tdplot_rotated_coords] (2,2,4) circle (2 pt);
	\draw[black,fill=idealcolor,tdplot_rotated_coords] (2,3,4) circle (2 pt);
	\draw[black,fill=idealcolor,tdplot_rotated_coords] (2,4,4) circle (2 pt);
	\draw[black,fill=idealcolor,tdplot_rotated_coords] (3,0,4) circle (2 pt);
	\draw[black,fill=idealcolor,tdplot_rotated_coords] (3,1,4) circle (2 pt);
	\draw[black,fill=idealcolor,tdplot_rotated_coords] (3,2,4) circle (2 pt);
	\draw[black,fill=idealcolor,tdplot_rotated_coords] (3,3,4) circle (2 pt);
	\draw[black,fill=idealcolor,tdplot_rotated_coords] (3,4,4) circle (2 pt);
	\draw[black,fill=idealcolor,tdplot_rotated_coords] (4,0,4) circle (2 pt);
	\draw[black,fill=idealcolor,tdplot_rotated_coords] (4,1,4) circle (2 pt);
	\draw[black,fill=idealcolor,tdplot_rotated_coords] (4,2,4) circle (2 pt);
	\draw[black,fill=idealcolor,tdplot_rotated_coords] (4,3,4) circle (2 pt);
	\draw[black,fill=idealcolor,tdplot_rotated_coords] (4,4,4) circle (2 pt);

	\end{tikzpicture}
	 \caption{$Q = \N \left[\begin{smallmatrix}0&1&0&1\\0&0&1&1\\ 1&1&1&1 \end{smallmatrix}\right]$ and $I=\left\< \left[\begin{smallmatrix} 2& 2 & 2 & 3 \\ 0 & 1 & 3 & 3 \\ 2 & 2 & 3 & 3 \end{smallmatrix}\right]\right\>$}
	 \label{fig:3d}
    \end{subfigure}
\caption{Monomials and ideals in affine semigroups}
\label{fig:affine_semigp}
\end{figure*}

\subsection{Hyperplane arrangements}
\label{subsec:hyperplane}

A polyhedron $\polytope$ can be expressed as an intersection of finitely many
open half-spaces $\{\hplane_{\smalli}^{(+)}\}_{\smalli=1}^{\smalln}$,
where $\arngmt:=\{\hplane_{\smalli}\}_{\smalli=1}^{\smalln}$  is a
collection of hyperplanes and $\smalln \in \N$. This is known as the \emph{$\hplane$-representation} of $\polytope$.
The  collection $\arngmt$ is called the \emph{hyperplane arrangement}
of $\polytope$ in $\R^{d}$. Throughout this article, we assume that
$\arngmt$ is \emph{linear}, meaning that all hyperplanes in $\arngmt$
contain the origin. A \emph{region} is a connected component $\regr$
of $\R^{d}-\bigcup_{\hplane \in \arngmt} \hplane$. Let
$\regionr{\arngmt}$ be a set containing all regions over $\arngmt$.
These are standard notions in combinatorics, see for instance~\cite{Stanley07}.  

Since our arrangement is linear, all regions in $\regionr{\arngmt}$
are unbounded~\cite{Stanley07}, constituting rational polyhedral
cones. Moreover, any region $\regr$ in $\regionr{\arngmt}$ can be
expressed as  
\[
\regr_{\setS}:=\left(\bigcap_{\smalli \in \setS}
  \hplane_{\smalli}^{(+)}\right) \cap \left(\bigcap_{\smalli \in
    [n]\minus\setS} \hplane_{\smalli}^{(-)}\right)
\] 
for a subset $\setS \subseteq [n]$, where $\hplane_{\smalli}^{(-)}$ is
the complement of $\hplane_{\smalli}^{(+)} \cup
\hplane_{\smalli}$. The collection $\regionr{\arngmt}$ is partially
ordered by reverse inclusion; $\regr_{\setS_{1}} \leq
\regr_{\setS_{2}} \text{ if } \setS_{1} \supseteq \setS_{2}.$ The
poset $\regionr{\arngmt}$ is called the \emph{poset of region
  $\regionr{\arngmt}$}. Under the order given by inclusion, this object
has received much attention~\cites{Edelman84,BEZ90}.

We can label faces of the affine semigroup $\affinesemigp$ using the
corresponding indices of its supporting hyperplanes. This gives rise
to a natural embedding
$\facelattice(\affinesemigp) \to
\regionr{\arngmt}$~\cite{Edelman84}*{Lemma 1.3}. To partition
$\R^{n}$, we modify the definition of $\regr_{\setS}$ as follows
\[
\regr_{\setS}:=\overline{\left(\bigcap_{\smalli \in \setS}
    \hplane_{\smalli}^{(+)}\right)} \cap \left(\bigcap_{\smalli \in
    [n]\minus\setS} \hplane_{\smalli}^{(-)}\right),
\]
where $\overline{(-)}$ denotes the closure of the set in the standard $\R^{n}$-topology. 

Similarly, given a region $\regr_{\setS} \in \regionr{\arngmt}$, a
\emph{cumulative region} $\regR_{\setS}$ is the closure of the union
of all regions less than $\regr_{\setS}$ in the partial order. In other words,
\[
\regR_{\setS} := \bigcup_{\substack{\setS' \supseteq \setS \\
    \regr_{\setS'} \in \regionr{\arngmt}}}
\regr_{\setS'}=\overline{\left(\bigcap_{\smalli \in \setS}
    \hplane_{\smalli}^{(+)}\right)}.
\]
The \emph{poset of cumulative regions} $\regionR{\arngmt}$ is the
collection of all cumulative regions ordered by inclusion. As posets,
$\regionR{\arngmt} \cong \regionr{\arngmt}$. It is worth noting that
every element in $\regionR{\arngmt}$ is a rational polyhedral
cone. The following result shows that
$\regionR{\arngmt}$ is the set of all polyhedral cones corresponding
to localizations of $\affinesemigp$. 

\begin{proposition}
\label{pro:localization_cone_structure}
Given a face $\facF \in \facelattice(\affinesemigp)$, let $\setS$ be a set of indices of hyperplanes whose half-space contains $\facF$. Then, $\realize(\affinesemigp -\N\facF) =  \regR_{\setS}$.
\end{proposition}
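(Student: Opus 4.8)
The plan is to show the two cones $\realize(\affinesemigp - \N\facF)$ and $\regR_{\setS}$ are equal by a double inclusion, using the $\hplane$-representation of $\realize\affinesemigp$. Write $\realize\affinesemigp = \bigcap_{\smalli=1}^{\smalln} \hplane_{\smalli}^{(+)}$, so that each $\hplane_{\smalli}$ is a supporting hyperplane of the cone and $\facelattice(\realize\affinesemigp)$ is obtained by intersecting with various subcollections of the $\hplane_{\smalli}$. By definition, $\setS$ is the set of indices $\smalli$ for which $\facF \subseteq \hplane_{\smalli}^{(+)}$; equivalently, since $\facF$ is a face and each $\hplane_{\smalli}^{(+)}$ is one of the defining halfspaces, $\setS$ is exactly the set of indices with $\facF \subseteq \hplane_{\smalli}$ (a face meeting the open halfspace $\hplane_{\smalli}^{(+)} \smallsetminus \hplane_{\smalli}$ would force that index to drop out), together with the fact that $\regR_{\setS} = \overline{\bigcap_{\smalli \in \setS} \hplane_{\smalli}^{(+)}}$.

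First I would establish $\realize(\affinesemigp - \N\facF) \subseteq \regR_{\setS}$. A general element of $\affinesemigp - \N\facF = \affinesemigp + \Z\facF$ has the form $\vecq + \vecz$ with $\vecq \in \affinesemigp$ and $\vecz \in \Z\facF$, so its nonnegative-real span consists of points $\vecp + \vecv$ with $\vecp \in \realize\affinesemigp$ and $\vecv \in \Span_\R(\facF)$. For each $\smalli \in \setS$ we have $\facF \subseteq \hplane_{\smalli}$, hence $\Span_\R(\facF) \subseteq \hplane_{\smalli}$, so $\vecv \in \hplane_{\smalli}$; since $\vecp \in \hplane_{\smalli}^{(+)}$, the sum lies in $\hplane_{\smalli}^{(+)}$ (here using that $\hplane_{\smalli}^{(+)}$ is a halfspace bounded by $\hplane_{\smalli}$, closed under adding vectors of $\hplane_{\smalli}$). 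Thus $\realize(\affinesemigp - \N\facF) \subseteq \bigcap_{\smalli \in \setS}\hplane_{\smalli}^{(+)} \subseteq \regR_{\setS}$. For the reverse inclusion, I would take a point $\vecx \in \bigcap_{\smalli\in\setS}\hplane_{\smalli}^{(+)}$ in the relative interior of that cone and show it lies in $\realize(\affinesemigp - \N\facF)$: pick a point $\vecr$ in the relative interior of $\realize\facF$ (which is in the relative interior of $\realize\affinesemigp$, intersected with $\hplane_{\smalli}$ for $\smalli \in \setS$ and in the open side $\hplane_{\smalli}^{(+)}\smallsetminus\hplane_{\smalli}$ for $\smalli \notin \setS$), and observe that $\vecx + N\vecr$ lies in the interior of $\realize\affinesemigp$ for $N \gg 0$, because adding a large multiple of $\vecr$ moves strictly into every halfspace $\hplane_{\smalli}^{(+)}$, $\smalli\notin\setS$, while staying inside $\hplane_{\smalli}^{(+)}$ for $\smalli\in\setS$. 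Hence $\vecx \in \realize\affinesemigp - \realize\facF = \realize(\affinesemigp - \N\facF)$ after scaling and approximation; taking closures gives $\regR_{\setS} \subseteq \realize(\affinesemigp-\N\facF)$, since the latter is a closed cone (it is a finitely generated, hence closed, rational polyhedral cone).

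The main obstacle I anticipate is the bookkeeping around which inclusion statements are open versus closed, and making precise the claim that $\setS$ "is" the set of indices with $\facF \subseteq \hplane_{\smalli}$ — this is where the Edelman-style correspondence $\facelattice(\affinesemigp) \hookrightarrow \regionr{\arngmt}$ cited in the text does the real work, identifying the face $\facF$ with the region/cumulative region cut out by exactly the hyperplanes containing it. Once that identification is in hand, the inclusion $\realize(\affinesemigp-\N\facF) \subseteq \regR_{\setS}$ is essentially formal, and the reverse inclusion reduces to the standard fact that localizing a cone at a face means "removing the facet inequalities through that face," i.e. the resulting cone is $\{\vecx : \hplane_{\smalli}\text{-inequality holds for all }\smalli\in\setS\}$. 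I would also double-check the edge cases $\facF = \{0\}$ (where $\setS = [\smalln]$ if $\affinesemigp$ is pointed, so $\regR_{\setS} = \realize\affinesemigp$ and no localization occurs) and $\facF = \affinesemigp$ (where $\setS = \varnothing$, $\regR_{\varnothing} = \R^{\dimd}$, matching $\realize\Z\affinesemigp$ when $\affinesemigp$ spans).
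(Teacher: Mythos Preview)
Your proposal is correct and follows essentially the same route as the paper: both arguments establish the forward inclusion by observing that $\affinesemigp$ and $\Z\facF$ (hence their Minkowski sum and its real cone) lie in every closed halfspace indexed by $\setS$, and both prove the reverse inclusion by choosing a point $\vecf$ (your $\vecr$) in the relative interior of $\facF$ and adding a sufficiently large multiple of it to an arbitrary $\vecx \in \regR_{\setS}$ so that the translate lands in $\realize\affinesemigp$, whence $\vecx$ is recovered as an element of $\realize\affinesemigp + \Span_\R(\facF) = \realize(\affinesemigp - \N\facF)$. The paper carries this out pointwise for every $\vecx \in \regR_{\setS}$ (so no closure step is needed at the end), but the underlying geometric idea is identical to yours.
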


\begin{proof}
From $\Z\facF \cup \affinesemigp \subset \regR_{\setS}$,
$\realize(\affinesemigp -\N\facF) \subseteq
\regR_{\setS}$. Conversely, for any $\varx \in \regR_{\setS}$, $\<
\vecc_{\smalli}, x\> \geq 0$ when $\smalli \in \setS$. Pick $\vecf \in
\relint(\N\facF)$ and let $\varx'= \varx + \left(\sum_{\smalli \not\in
    \setS} \smalla_{\smalli}\right)\vecf$ where $\smalla_{\smalli}$ is
a non-negative real number such that $\< \vecc_{\smalli},
\varx+\smalla_{\smalli}\vecf\> \geq 0$. Then, $\varx=\varx' +
(\varx-\varx')$ with $\varx' \in \realize{\affinesemigp}$ and
$(\varx-\varx') \in \text{span}(\facF)$. Thus, $\realize(\affinesemigp
-\N\facF) =  \realize(\Z\facF \cup \affinesemigp)\supseteq
\regR_{\setS}$.  
\end{proof}

Let $\catname{Cat}_{\affinesemigp}$ be the poset containing all
localizations of $\affinesemigp$ with an order by inclusion. 
Below, we describe all posets that arise in this section using a
commutative diagram. 
\[
\begin{tikzcd}
\facelattice(\affinesemigp) \ar[r,"\affinesemigp-\N(-)","\cong"'] &\catname{Cat}_{\affinesemigp} \ar[r,hook,"\realize(-)"] & \regionR{\arngmt}\arrow[r,"\cong"] & \regionr{\arngmt}\arrow[r,hook,"\phi"] & (2^{\arngmt})^{\text{op}}  \\
\end{tikzcd}
\]
Note that the embedding $\facelattice(\affinesemigp) \to
\regionr{\arngmt}$ in~\cite{Edelman84}*{Lemma 1.3} is split into the
diagram above. Moreover, all posets are indexed by a subposet of
$(2^{\arngmt})^{op}$, a poset of subsets of $\arngmt$ by reverse
inclusion. The inclusion map $\phi$ returns the set of indices of
positive half-spaces containing the given element. 

\begin{example}[Continuation of \cref{ex:affine_semigp}]
\label{ex:hplane_arngmt}
\
\begin{enumerate}[leftmargin=*]
\item  \label{enum:frob_hplane_arngmt}
Let $\polytope = \realize\affinesemigp$ with $\affinesemigp =
\N\left[\begin{smallmatrix} 1 & 1 & 1& \cdots & 1 \\ 0 & a_{1} & a_{2}
    & \cdots & a_{\smalln-1}\end{smallmatrix}\right]$. Then
$\polytope$ is a 2-dimensional cone with facets (rays)
$\realize\left[\begin{smallmatrix} 1 \\ 0\end{smallmatrix}\right]$ and
$\realize\left[\begin{smallmatrix} 1 \\
    a_{\smalln-1}\end{smallmatrix}\right]$. Hence $\arngmt =
\{\hplane_{1}:=\R\left[\begin{smallmatrix} 1 \\
    0\end{smallmatrix}\right],\hplane_{2}:=\R\left[\begin{smallmatrix}
    1 \\ a_{\smalln-1}\end{smallmatrix}\right]\}$. Since $\polytope$
is a homogenization of the 1-simplex, $\regionr{\arngmt},
\regionR{\arngmt}, \facelattice(\affinesemigp)$ and
$\catname{Cat}_{\affinesemigp}$ are all isomorphic as posets.  
\[
\begin{tabular}{|c|c|c|c|c|}
\hline
 $\facelattice(\affinesemigp)$ & $\catname{Cat}_{Q}$ & $\regionR{\arngmt}$ & $\regionr{\arngmt}$ & $2^{\arngmt}$  \\
\hline
 $0$ & $\affinesemigp$ & $\polytope$ & $\polytope=\hplane_{1}^{(+)} \cap \hplane_{2}^{(+)}$ & $\{\hplane_{1},\hplane_{2}\}$  \\
\hline
 $\facF_{1}$ & $\affinesemigp - \N \facF_{1}$ & $\hplane_{1}^{(+)} = \{ y \geq 0\}$ & $\hplane_{1}^{(+)} \cap \hplane_{2}^{(-)} = \{ y \geq 0, y> a_{\smalln-1}x \}$ & $\{\hplane_{1}\}$  \\
\hline
 $\facF_{2}$ & $\affinesemigp - \N \facF_{2}$ & $\hplane_{2}^{(+)} = \{ y\leq a_{\smalln-1}x\}$ & $\hplane_{1}^{(-)} \cap \hplane_{2}^{(+)} = \{ y < 0, y\leq a_{\smalln-1}x \}$ & $\{\hplane_{2}\}$  \\
\hline
 $Q$ & $\Z^{2}$ & $\R^{2}$ & $\hplane_{1}^{(-)} \cap \hplane_{2}^{(-)} = \{ y < 0, y> a_{\smalln-1}x \}$ & $\varnothing$  \\
\hline
\end{tabular}
\]
\item \label{enum:3d_hplane_arngmt}
Given $\affinesemigp =\N \left[\begin{smallmatrix}0 & 1 & 1 & 0 \\ 0 & 0 & 1 & 1 \\ 1 & 1 & 1 & 1 \end{smallmatrix}\right],$ denote $\veca_{i}$ for the $i$-th column of $\left[\begin{smallmatrix}0 & 1 & 1 & 0 \\ 0 & 0 & 1 & 1 \\ 1 & 1 & 1 & 1 \end{smallmatrix}\right].$  Then, as seen below, we may represent facets $\facF_{\smalli}$ and the hyperplane arrangement $\arngmt=\{ \hplane_{1},\hplane_{2},\hplane_{3},\hplane_{4}\}$.
\begin{align*}
\facF_{1}&:= \langle \veca_{1},\veca_{2}\rangle, & \facF_{2}&:= \langle \veca_{2},\veca_{3}\rangle, & \facF_{3}&:= \langle \veca_{3},\veca_{4}\rangle, &\facF_{4}&:= \langle \veca_{4},\veca_{1}\rangle \\
\hplane_{1}^{(+)}&:= \{ y > 0\}, & \hplane_{2}^{(+)}&:= \{ z > x\}, & \hplane_{3}^{(+)}&:= \{ z > y\},& \hplane_{4}^{(+)}&:= \{ x > 0\}.
\end{align*}
Observe that $\realize{(\affinesemigp-\facF_{\smalli})}= \regR_{\{\smalli\}}$ for any $\smalli \in [n]$ and
\begin{align*}
\realize{(\affinesemigp-\langle \veca_{1}\rangle)} &=\regR_{1,4},& 
\realize{(\affinesemigp-\langle \veca_{2}\rangle)} &= \regR_{1,2}, \\
\realize{(\affinesemigp-\langle \veca_{3}\rangle)} & =\regR_{2,3},&
\realize{(\affinesemigp-\langle \veca_{4}\rangle)} &=\regR_{3,4}.
\end{align*}
Thus, $\regionR{\arngmt}\supsetneq
\realize{(\catname{Cat}_{\affinesemigp})}$, for example, because
localization cannot generate affine semigroups in
$\regR_{1,2,3}$. This illustrates the nontrivial injection
$\regionr{\arngmt} \cong \regionR{\arngmt} \supsetneq
\facelattice(\affinesemigp) \cong\catname{Cat}_{\affinesemigp}$,
described in~\cref{fig:3d_hasse_diagram}. 
\end{enumerate}
\end{example}

\begin{figure*}[t!]
\centering
\begin{tikzpicture}[scale=0.8]
  \node (max) at (0,4) {$\Z\affinesemigp$};
  \node (abd) at (-3,2) {$\affinesemigp-\facF_{1}$};
  \node (abc) at (-1,2) {$\affinesemigp-\facF_{2}$};
  \node (bcd) at (1,2) {$\affinesemigp-\facF_{3}$};
  \node (acd) at (3,2) {$\affinesemigp-\facF_{4}$};
  \node (ad) at (-3,0) {$\affinesemigp-\langle \veca_{1}\rangle$};
  \node (ab) at (-1,0) {$\affinesemigp-\langle \veca_{2}\rangle$};
  \node (bc) at (1,0) {$\affinesemigp-\langle \veca_{3}\rangle$};
  \node (cd) at (3,0) {$\affinesemigp-\langle \veca_{4}\rangle$};
  \node (min) at (0,-4) {$\affinesemigp$};
  \draw (max) -- (abc) (max) -- (abd) (max) -- (acd) (max) -- (bcd);
  \draw (abc)--(ab) (abc) -- (bc) (bcd) -- (bc) (bcd) -- (cd) (abd) -- (ab) (abd) -- (ad);
\draw[preaction={draw=white, -,line width=6pt}] (acd) -- (ad) (acd) --(cd);
  \draw (ad) -- (min) (ab) -- (min)  (bc)--(min) (cd) -- (min);

\draw[->] (4,0) -- (6,0);
\node[above] at (5,0) {$\realize{(-)}$};

  \node (zmax) at (10,4) {$\Z\affinesemigp$};
  \node (zabd) at (7,2) {$\regR_{1}$};
  \node (zabc) at (9,2) {$\regR_{2}$};
  \node (zbcd) at (11,2) {$\regR_{3}$};
  \node (zacd) at (13,2) {$\regR_{4}$};
  \node (zad) at (7,0) {$\regR_{1,4}$};
  \node (zab) at (9,0) {$\regR_{1,2}$};
  \node (zbc) at (11,0) {$\regR_{2,3}$};
  \node (zcd) at (13,0) {$\regR_{3,4}$};
  \node (zd) at (7,-2) {$\regR_{1,3,4}$};
  \node (za) at (9,-2) {$\regR_{1,2,4}$};
  \node (zb) at (11,-2) {$\regR_{1,2,3}$};
  \node (zc) at (13,-2) {$\regR_{2,3,4}$};
  \node (zmin) at (10,-4) {$\affinesemigp$};
  \draw (zmax) -- (zabc) (zmax) -- (zabd) (zmax) -- (zacd) (zmax) -- (zbcd);
  \draw (zabc)--(zab) (zabc) -- (zbc) (zbcd) -- (zbc) (zbcd) -- (zcd) (zabd) -- (zab) (zabd) -- (zad);
\draw[preaction={draw=white, -,line width=6pt}] (zacd) -- (zad) (zacd) --(zcd);
  \draw (zad) -- (za) (zad) -- (zd) (zab) -- (za) (zab) -- (zb) (zbc)--(zb) (zbc) -- (zc);
    \draw[preaction={draw=white, -,line width=6pt}] (zcd)--(zd) (zcd) -- (zc);
      \draw (zmin) -- (za) (zmin) -- (zb) (zmin) -- (zc) (zmin) -- (zd);
\end{tikzpicture}
\caption{Hasse diagrams of $\catname{Cat}_{\affinesemigp}$ and $\regionR{\arngmt}$ from~\cref{enum:3d_hplane_arngmt} of~\cref{ex:hplane_arngmt}}
\label{fig:3d_hasse_diagram}
\end{figure*}

\subsection{Ishida complex}
\label{subsec:Ishida}

Using the polyhedral cone structure of an affine semigroup, the Ishida
complex was developed to compute the local cohomology of modules over
pointed affine semigroup rings supported on the graded maximal
ideal~\cite{MR977758}. Given a pointed affine semigroup
$\affinesemigp$, $\realize{\affinesemigp}$ has a \emph{transverse
  section} $\crosssection$~\cite{Ziegler95}*{Exercise 2.19}, which is
a closed polytope generated by intersecting $\realize\affinesemigp$
with a hyperplane $\hplane$ that intersects all unbounded faces of
$\realize{\affinesemigp}$. This results in the following canonical
isomorphism $\virtual{-}:\facelattice(\crosssection) \to
\facelattice(\affinesemigp)$ where $\virtual{\facF}$ is the minimal
face of $\affinesemigp$ such that $\realize\virtual{\facF} \supseteq
\facF$. The vertex of $\affinesemigp$ corresponds to the
(-1)-dimensional face $\varnothing$ of
$\crosssection$. $\crosssection$ is a CW complex with an incidence
function 
\[
\incidence:
\bigoplus_{\smalli=-1}^{\dimd-1}\facelattice(\crosssection)
^{\smalli}\times  \facelattice(\crosssection)^{\smalli-1}   \to
\{0,\pm1 \}.
\] 
in which $\incidence$ is determined by an orientation of
$\crosssection$ and has a nonzero value when two faces are
incident~\cite{Massey80}*{IV.~\S~5}.   

\begin{definition}[Ishida complex~\cite{MR977758}]
\label{def:ishida_cpx}
Let $\maxid$ be the maximal monomial ideal of
$\field[\affinesemigp]$. The set of all $k$-dimensional faces in
$\facelattice(\crosssection)$ is denoted by
$\facelattice(\crosssection)^{k}$. Let $\ishida^{\bullet}$ be the
chain complex  
\[
\begin{tikzcd}
\ishida^{\bullet}: 0\arrow[r] & \ishida^{0} \arrow[r,"\partial"] &\ishida^{1} \arrow[r,"\partial"] & \cdots \arrow[r,"\partial"] & \ishida^{d} \arrow[r,"\partial"] & 0, \qquad L^{\smallk} := \bigoplus\limits_{\facF \in \facelattice(\crosssection)^{\smallk-1}}\field[\affinesemigp-\N\virtual{\facF}]
\end{tikzcd}
\]
where the differential $\partial :L^{\smallk} \to L^{\smallk+1}$ is induced by componentwise map $\partial_{\facF,\facG}$ with $\facF \in  \facelattice(\crosssection)^{\smallk-1}$, $\facG \in  \facelattice(\crosssection)^{\smallk}$ such that
\[
\partial_{\facF,\facG}: \field[\affinesemigp-\N\virtual{\facF}] \to
\field[\affinesemigp-\N\virtual{\facG}] \text{ to be } \begin{cases} 0
  & \text{ if } \facF \not\subset \facG \\
  \incidence(\facF,\facG)\cdot\nat & \text{ if } \facF \subset
  \facG  \end{cases}
\]
with $\nat$, the canonical injection
$\field[\affinesemigp-\N\virtual{\facF}] \to
\field[\affinesemigp-\N\virtual{\facG}]$ when $\facF \subseteq
\facG$. We say that $\ishida^{\bullet}
\otimes_{\field[\affinesemigp]}\module$ is the \emph{Ishida complex}
of a $\field[\affinesemigp]$-module $\module$ supported at the maximal
monomial ideal. 
\end{definition}

The Ishida complex indeed computes local cohomology as follows.

\begin{theorem}[\cite{MR977758}*{Theorem 6.2.5}]
\label{thm:Ishida}
For any graded $\field[\affinesemigp]$-module $\module$, and all $\smallk \geq 0$,
\[
\localcoho{\maxid}{\smallk}{\module} \cong
H^{\smallk}(\ishida^{\bullet} \tensor{\field[\affinesemigp]}\module).
\]
\end{theorem}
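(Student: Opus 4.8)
The plan is to compare $\ishida^{\bullet}$ with the extended \v{C}ech (stable Koszul) complex. Since every monomial $\neq 1$ is divisible by some $\vart^{\veca_{i}}$, we have $\maxid=(\vart^{\veca_{1}},\dots,\vart^{\veca_{n}})$, so for the extended \v{C}ech complex $\check{C}^{\bullet}$ on $\vart^{\veca_{1}},\dots,\vart^{\veca_{n}}$ one has $H^{\bullet}(\check{C}^{\bullet}\otimes_{\field[\affinesemigp]}\module)\cong\localcoho{\maxid}{\bullet}{\module}$ for every graded $\field[\affinesemigp]$-module $\module$; here the $p$-th term is $\bigoplus_{|J|=p}\module_{J}$, with $J$ ranging over $p$-element subsets of $\{1,\dots,n\}$ and $\module_{J}$ the localization of $\module$ inverting $\prod_{j\in J}\vart^{\veca_{j}}$. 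By \cref{lem:localization_proposition}, inverting $\prod_{j\in J}\vart^{\veca_{j}}$ in $\field[\affinesemigp]$ yields $\field[\affinesemigp-\N\facF_{J}]$, where $\facF_{J}$ is the smallest face of $\affinesemigp$ containing $\{\veca_{j}:j\in J\}$ (the minimal face $\{0\}$ when $J=\varnothing$), so $\module_{J}=\field[\affinesemigp-\N\facF_{J}]\otimes_{\field[\affinesemigp]}\module$. First I would filter $\check{C}^{\bullet}\otimes_{\field[\affinesemigp]}\module$ by the subcomplexes spanned by the summands with $\dim\facF_{J}\le s$: the \v{C}ech differential only deletes indices, and deleting an index can only shrink $\facF_{J}$, so these are genuine subcomplexes, each a $\field[\affinesemigp]$-module direct summand, and hence the filtration is compatible with $\otimes_{\field[\affinesemigp]}\module$.

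The core step is to identify the associated graded. It decomposes as $\bigoplus_{\facG}\mathrm{gr}_{\facG}$ over the faces $\facG$ of $\affinesemigp$, where $\mathrm{gr}_{\facG}$ collects the summands with $\facF_{J}=\facG$; all of these are canonically $\module_{\facG}:=\field[\affinesemigp-\N\facG]\otimes_{\field[\affinesemigp]}\module$, and the induced differential is the \v{C}ech differential on index sets, so $\mathrm{gr}_{\facG}\cong\module_{\facG}\otimes_{\field}D_{\facG}^{\bullet}$ with $D_{\facG}^{\bullet}$ the $\field$-linear cochain complex on those subsets $J\subseteq\{j:\veca_{j}\in\facG\}$ whose smallest containing face is $\facG$. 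The key point is that, up to a degree shift by one, $D_{\facG}^{\bullet}$ computes the relative simplicial cohomology of the pair $(\subcpx^{\mathrm{full}}_{\facG},\subcpx_{\facG})$, where $\subcpx^{\mathrm{full}}_{\facG}$ is the full simplex on the generators lying in $\facG$ and $\subcpx_{\facG}$ its subcomplex of faces contained in a proper face of $\facG$. Since $\subcpx^{\mathrm{full}}_{\facG}$ is contractible and $|\subcpx_{\facG}|$ has the homotopy type of the boundary of a transverse section of $\facG$, i.e.\ of a sphere of dimension $\dim\facG-2$, the cohomology $H^{\bullet}(D_{\facG}^{\bullet})$ is one dimensional, concentrated in degree $\dim\facG$ (the case $\facG=\{0\}$ being the degenerate one, contributing $\field$ in degree $0$). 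Because $D_{\facG}^{\bullet}$ is a complex of $\field$-vector spaces, $\module_{\facG}\otimes_{\field}(-)$ is exact, so $H^{\bullet}(\mathrm{gr}_{\facG})=\module_{\facG}$ placed in degree $\dim\facG$ --- exactly the summand of $\ishida^{\bullet}\otimes_{\field[\affinesemigp]}\module$ indexed by the face $\facF\in\facelattice(\crosssection)$ with $\virtual{\facF}=\facG$, which lives in homological degree $\dim\facF+1=\dim\facG$.

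Finally I would run the spectral sequence of this filtration. By the previous paragraph its $E_{1}$ page is supported on a single row --- the term for $\facG$ sits in total degree $\dim\facG$ --- so $E_{1}$ is, as a graded object, $\ishida^{\bullet}\otimes_{\field[\affinesemigp]}\module$, and a comparison of the connecting homomorphisms identifies the $E_{1}$ differential with $\partial\otimes\mathrm{id}_{\module}$; this is precisely the bookkeeping role of the incidence function $\incidence$, which reconciles the signs produced by the \v{C}ech differential with a fixed orientation of $\crosssection$. Being concentrated on one row, the spectral sequence degenerates at $E_{2}$, and therefore $\localcoho{\maxid}{\smallk}{\module}\cong H^{\smallk}(\check{C}^{\bullet}\otimes_{\field[\affinesemigp]}\module)\cong H^{\smallk}(\ishida^{\bullet}\otimes_{\field[\affinesemigp]}\module)$ for all $\smallk\ge 0$. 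I expect the main obstacle to be the topological input above --- that $|\subcpx_{\facG}|$ is homotopy equivalent to the boundary sphere of a transverse section of $\facG$, which I would establish by a nerve-lemma argument for the cover of that boundary by its facets (whose nonempty intersections are faces of a polytope, hence contractible) --- together with the sign bookkeeping needed to check that the induced differentials really do reassemble into the Ishida differential built from $\incidence$.
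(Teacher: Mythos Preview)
The paper does not supply its own proof of this theorem; it is quoted from the literature (Ishida's result, cited there as Theorem~6.2.5) and used as a black box throughout. So there is nothing in the paper to compare your argument against.

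Your approach is the standard one and is correct in outline, but there is one slip to fix. The \v{C}ech differential on $\check{C}^{p}=\bigoplus_{|J|=p}\module_{J}$ goes from $\check{C}^{p}$ to $\check{C}^{p+1}$ and therefore \emph{adds} an index rather than deleting one; hence $\dim\facF_{J}$ can only increase along the differential, and the pieces with $\dim\facF_{J}\le s$ are quotient complexes, not subcomplexes. The repair is immediate: use instead the decreasing filtration $F^{s}=\bigoplus_{\dim\facF_{J}\ge s}$, which does consist of subcomplexes. The associated graded, the identification $H^{\bullet}(\mathrm{gr}_{\facG})\cong\module_{\facG}$ concentrated in degree $\dim\facG$, and the spectral-sequence degeneration at $E_{2}$ are unaffected; the $E_{1}$ page is still supported on the single row $t=0$. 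Your topological input---that $|\subcpx_{\facG}|$ is homotopy equivalent to $S^{\dim\facG-2}$ via the nerve of the facet cover of the boundary of a transverse section of $\facG$---is correct (note this uses that every positive-dimensional face of $\affinesemigp$ contains some generator $\veca_{j}$, so the relevant intersections of simplices are nonempty exactly when the corresponding faces of the transverse section meet). Your caveat about the sign bookkeeping matching the $d_{1}$ differential to the Ishida differential built from $\incidence$ is well placed: that is indeed where the residual work lies.
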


\begin{example}[Continuation of \cref{ex:affine_semigp}]
\label{ex:ishida}
\
\begin{enumerate}[leftmargin=*]
\item \label{enum:frob_ishida}
Given $\affinesemigp/\idI = \N\left[\begin{smallmatrix} 1 & 1 & 1 & 1
    \\ 0 & 1 & 3 &
    4\end{smallmatrix}\right]/\left\<\left[\begin{smallmatrix} 1 \\
      1\end{smallmatrix}\right]\right\>$, let
$S=\field[\affinesemigp]/\midI$, where $\midI$ is a monomial ideal
corresponding to $\idI$. The transverse section of
$\realize{\affinesemigp}$ is a line segment with vertices
$\facF_{1}=\left[\begin{smallmatrix} 1 \\ 0 \end{smallmatrix}\right]$
and $\facF_{2}=\left[\begin{smallmatrix} 1 \\
    4 \end{smallmatrix}\right]$ respectively. Thus, the corresponding
Ishida complex of $S$ with the maximal ideal support is 
\[
\ishida^{\bullet}:0 \to S \to S_{x} \oplus S_{xy^4} \to 0 \to 0
\]
\item \label{enum:3d_ishida}
Given $\affinesemigp/\idI =\N \left[\begin{smallmatrix}0 & 1 & 1 & 0
    \\ 0 & 0 & 1 & 1 \\ 1 & 1 & 1 &
    1 \end{smallmatrix}\right]/\left\<\left[\begin{smallmatrix} 2& 2 &
      2 & 3 \\ 0 & 1 & 3 & 3 \\ 2 & 2 & 3 &
      3 \end{smallmatrix}\right]\right\>$, let
$S=\field[\affinesemigp]/\midI$, where $\midI$ is a monomial ideal
corresponding to $\idI$. $\realize{\affinesemigp}$'s transverse
section is rectangular. Due to the fact that all other localizations
of $S$ are zero except for the localizations by monomial prime ideals
corresponding to $\virtual{\veca_{1}}:=\left[\begin{smallmatrix} 0 \\
    0
    \\1 \end{smallmatrix}\right],\virtual{\veca_{4}}:=\left[\begin{smallmatrix}0
    \\ 1 \\ 1 \end{smallmatrix}\right],$ and
$\facF_{4}:=\left[\begin{smallmatrix} 0 & 0 \\ 0 & 1 \\ 1 &
    1 \end{smallmatrix}\right]$, the Ishida complex of $S$ with the
maximal ideal support is as follows;  
\[
\ishida^{\bullet}:0 \to S \to S_{z} \oplus S_{yz} \to S_{z,yz} \to 0
\]
\end{enumerate}
\end{example}

\section{Multidegrees and localization for graded \texorpdfstring{$\field[\N\genset]$}--module}
\label{sec:stdpairs}

Let $\affinesemigp = \N\genset$ be an affine semigroup. We know that the
faces of $\affinesemigp$ govern the localizations of any $\Z^{\dimd}$-graded
$\field[\affinesemigp]$-module $\module$ by monomials. In this section, we examine the
effect of localization on the supporting multidegrees of $\module$,
defined as follows.

\begin{definition}
  \label{def:degreePairs}
Let $\module$ be a $\Z^{\dimd}$-graded
$\field[\affinesemigp]$-module.
\begin{enumerate}
\item The \emph{degree set} of $\module$ is
defined to be
\[
\deg(\module):=\{ \veca \in \Z^{\dimd}\mid \module_{\veca} \neq
0\}.\]
\item
A \emph{proper pair} of $\module$ is a pair $(\veca,\facF)$
where $\veca \in \affinesemigp$ and $\facF \in
\facelattice(\affinesemigp)$ such that $\veca +\N\facF \subseteq
\deg(\module)$.
\item
If $(\veca,\facF)$ and $(\vecb,\facG)$ are proper
pairs, we say $(\veca,\facF) < (\vecb,\facG)$ if $\veca + \N\facF
\subseteq \vecb +\N\facG$. A proper pair $(\veca,\facF)$ of $M$ is
called a \emph{degree pair} of $\module$ if it is maximal among
proper pairs in this partial order.
\end{enumerate}
\end{definition}

\begin{lemma}
\label{lem:finiteness_of_degree_pairs}
Any finitely generated $\Z^{\dimd}$-graded
$\field[\affinesemigp]$-module $\module$ has finitely many degree
pairs. 
\end{lemma}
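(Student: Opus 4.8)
The plan is to reduce the finiteness of the set of degree pairs to two facts: first, that $\deg(\module)$ is a finite union of translated faces of $\affinesemigp$ (because $\module$ is finitely generated and finely graded over a semigroup ring), and second, that for a fixed face $\facF$, the ``maximal'' translates $\veca + \N\facF$ contained in $\deg(\module)$ cannot proliferate indefinitely once $\deg(\module)$ has that structure. So first I would fix a finite generating set $m_{1},\dots,m_{r}$ of $\module$ with $m_{j}$ homogeneous of degree $\vecb_{j}$. Since $\module = \sum_{j} \field[\affinesemigp]\cdot m_{j}$, we get $\deg(\module) = \bigcup_{j}\{\vecb_{j} + \veca : \veca \in \affinesemigp,\ \module_{\vecb_j + \veca}\neq 0\}$, a subset of $\bigcup_j (\vecb_j + \affinesemigp)$. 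I do not expect $\deg(\module)$ itself to be a clean union of translated faces in general, but it is contained in a finite union of translates of $\affinesemigp$, and that containment is what bounds things from above.

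Next I would set up the counting. Fix a face $\facF \in \facelattice(\affinesemigp)$ and consider the degree pairs $(\veca,\facF)$ with that particular second coordinate. I claim that each such pair determines, and is determined by, a maximal coset-like piece of $\deg(\module)$ parallel to $\Span(\facF)$. Concretely, project along $\Z\facF$: let $\pi_{\facF}\colon \Z^{d}\to \Z^{d}/\Z\facF$. A degree pair $(\veca,\facF)$ gives the ray $\veca + \N\facF$, whose image under $\pi_\facF$ is the single point $\pi_\facF(\veca)$. Two degree pairs $(\veca,\facF)$ and $(\veca',\facF)$ with $\pi_\facF(\veca)=\pi_\facF(\veca')$ would have $\veca' - \veca \in \Z\facF$; using maximality and the fact that $\N\facF$ is (up to the group it generates) a full-dimensional cone in $\Z\facF$, I would argue that either one ray contains the other — contradicting maximality unless they are equal — or their union is again contained in a single translate $\vecc + \N\facF \subseteq \deg(\module)$, again contradicting maximality. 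Hence distinct degree pairs with second coordinate $\facF$ have distinct images in $\Z^{d}/\Z\facF$. It then suffices to bound the number of such images. Because $\deg(\module)\subseteq \bigcup_{j}(\vecb_{j}+\affinesemigp)$, the image $\pi_\facF(\deg(\module))$ lies in $\bigcup_{j}\big(\pi_\facF(\vecb_j) + \pi_\facF(\affinesemigp)\big)$, and $\pi_\facF(\affinesemigp)$ is itself a finitely generated submonoid of $\Z^{d}/\Z\facF$; but this does not yet give finiteness, since an affine semigroup has infinitely many elements.

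The real point — and the step I expect to be the main obstacle — is to show the relevant images form a \emph{bounded} (hence finite) subset of the quotient lattice. For this I would use that $(\veca,\facF)$ being a degree pair forces $\veca + \N\facF$ to be \emph{maximal}: in particular $\veca$ cannot be moved ``into'' $\affinesemigp$ further in any direction transverse to $\facF$ while keeping the whole ray inside $\deg(\module)$. Combined with $\deg(\module)$ being sandwiched between $\affinesemigp$-translates, this should confine $\pi_\facF(\veca)$ to a region that is bounded in the directions spanned by the generators $\veca_i$ with $i$ indexing facets not containing $\facF$ — essentially, each such coordinate is pinned between $0$ and the corresponding coordinate of some $\vecb_j$ (up to a bounded error coming from the non-normality of $\affinesemigp$, controlled by the finitely many holes $\holes(\affinesemigp)$). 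A clean way to package this: choose linear functionals $\ell$ cutting out the facets of $\realize{\affinesemigp}$ not containing $\facF$; on $\veca + \N\facF$ these $\ell$ are constant, and maximality of the pair forces each $\ell(\veca)$ to be minimal subject to $\veca+\N\facF\subseteq \deg(\module)$, which — using the generator bound — means $0 \le \ell(\veca) \le \max_j \ell(\vecb_j) + C$ for a constant $C$ depending only on $\affinesemigp$. Finitely many integer points satisfy all these inequalities modulo $\Z\facF$, so there are finitely many degree pairs with second coordinate $\facF$. Summing over the finitely many faces $\facF \in \facelattice(\affinesemigp)$ gives the result.

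Alternatively, and perhaps more efficiently, one can bypass the coordinate bookkeeping by a Noetherian argument: the set of proper pairs is partially ordered, and one shows that any strictly ascending chain of proper pairs stabilizes, because ascending in the order either enlarges the face (only finitely many possibilities) or, with the face fixed, strictly moves $\veca$ in a direction transverse to $\facF$ toward the ``boundary'' of $\deg(\module)$ relative to the $\affinesemigp$-translates containing it — and that can happen only finitely often before exiting $\deg(\module)$. Then every proper pair sits below a maximal one, and a standard argument shows the maximal ones are finite in number provided the module is finitely generated. I would present the first, more explicit approach in the paper, since it also yields effective bounds, and keep the Noetherian phrasing as the conceptual backbone. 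The main obstacle, in either route, is the same: translating ``maximality of the pair'' into an honest \emph{finite} box of possibilities in the transverse directions, for which the finiteness of $\holes(\affinesemigp)$ is the essential input that handles the non-normal case.
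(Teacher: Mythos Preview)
Your projection argument in step 3 has a genuine gap. You claim that two distinct degree pairs $(\veca,\facF)$ and $(\veca',\facF)$ with $\pi_\facF(\veca)=\pi_\facF(\veca')$ would force one ray to contain the other, or else their union to lie in a single translate of $\N\facF$ inside $\deg(\module)$, contradicting maximality. Neither alternative holds in general: overlapping but distinct degree pairs with the same face are explicitly allowed (and occur in the paper's own \cref{ex:stdpairs}(\ref{enum:3d_stdpairs}), where $\bigl((1,0,1),\facF_4\bigr)$ and $\bigl((1,1,1),\facF_4\bigr)$ are distinct degree pairs whose difference lies in $\Z\facF_4$). Maximality is with respect to the partial order on \emph{pairs}, not on sets $\veca+\N\facF$; two maximal pairs can overlap without either dominating the other and without their union being a single translated face. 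So ``distinct degree pairs with face $\facF$ project to distinct points'' is simply false, and the counting collapses. The fallback Noetherian sketch has the same defect at a different place: ACC on the poset of proper pairs (even if you establish it) does not by itself bound the number of maximal elements---an infinite antichain is a counterexample.

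The paper takes a different and shorter route. Rather than bounding $\deg(\module)$ from above by translates of $\affinesemigp$, it uses a graded prime filtration $0=\module_0\subset\cdots\subset\module_l=\module$ with $\module_i/\module_{i-1}\cong\bigl(\field[\affinesemigp]/P_{\facF_i}\bigr)(-\veca_i)$ to write $\deg(\module)$ \emph{exactly} as a finite union $\bigcup_i(\veca_i+\N\facF_i)$ of translated faces. This is a finite ``pair cover'' in the sense of \cite{STDPAIR}, and their Theorem~3.16 then gives finitely many maximal pairs directly. The point you are missing is precisely this exact decomposition: containment in $\bigcup_j(\vecb_j+\affinesemigp)$ is too coarse to control maximal pairs, whereas the filtration gives you the faces themselves. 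If you want to repair your approach, the natural move is to replace your generating-set bound by the prime filtration and then argue (as in \cite{STDPAIR}) that any degree pair must coincide, as a set, with a union of pieces of the cover sharing the same face---which is finite combinatorics.
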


\begin{proof}
Let $0=\module_{0} \subset \module_{1} \subset \cdots \module_{l}
=\module$ be a chain of submodules of $\module$ such that
$\module_{i-1}/\module_{i}\cong \field[\N\genset]/P_{i}$ where $P_{i}$
is a graded prime ideal of $\field[\N\genset]$.

Thus, $\deg\left(\module_{i}/\module_{i-1}\right) =
\veca_{i}+\N\facF_{i}$ for some $\veca_{i} \in \Z^{\dimd}$ and a face
$\facF_{i}$ corresponding to $P_{i}$.
To see that
\[
\deg\left(\module\right) = \deg\left(\module_{0} \oplus
  \module_{1}/\module_{0} \oplus \cdots \oplus
  \module_{l}/\module_{l-1}\right) =
\bigcup_{i=1}^{l}\veca_{i}+\N\facF_{i},
\]
note that for any homogeneous element $m \in \module$, there exists
$i$ such that $\overline{m} \in \module_{i}/\module_{i-1}$ is
nonzero. Hence, $\deg(m) \in \veca_{i}+\N\facF_{i}$. Conversely, we
may lift any graded element in the direct sum to $\module$.
This says we have a finite pair cover, from here to finitely 
many standard pairs as demonstrated by~\cite{STDPAIR}*{Theorem 3.16}.
\end{proof}

Two degree pairs $(\veca,\facF)$ and $(\vecb,\facF)$ with the same
face $\facF$ \emph{overlap} if the intersection $(\veca +
\N\facF)\cap(\vecb+\N\facF)$ is nonempty. Overlapping is an
equivalence relation. The \emph{overlap class} $[\veca,\facF]$ is the
equivalence class containing the degree pair $(\veca,\facF)$. We
define $\degp(\module)$ (resp. $\degpo(\module)$) as the set of all
(resp. overlap classes of) degree pairs of $\module$. These
definitions are based on~\cite{STDPAIR}*{Definition~3.1 and 3.2},
which apply to general (not necessarily pointed) affine semigroup
rings, but only in the monomial ideal case. 

\begin{example}[Standard pairs and void pairs]
\label{ex:std_void}
\
\begin{enumerate}[leftmargin=*]
\item \label{enum:stdpairs}
Let $\midI$ be a monomial ideal of $\field[\N\genset]$. The degree
pairs of $\module =\field[\N\genset]/\midI$ are the \emph{standard
  pairs} of $\midI$ introduced in~\cite{STV95} and generalized
in~\cite{STDPAIR}*{Definition~3.1 and 3.2}. 
\item \label{enum:voidpairs} 
Given an affine semigroup $\affinesemigp:=\N\genset$, the
\emph{saturation of $\affinesemigp$} is $\affinesemigp_{\text{sat}} = \Z^{\dimd} \cap
\realize{\affinesemigp}$. It is known that the affine semigroup ring
corresponding to the saturation of of $\affinesemigp$ is the
normalization of $\field[\affinesemigp]$.
The set of \emph{holes} of $\affinesemigp$ is defined to be the
difference $\affinesemigp_{\text{sat}} \smallsetminus \affinesemigp$.

\medskip
\noindent The set of holes of 
$\affinesemigp$ is also the degree set
$\deg(\field[\affinesemigp_{\text{sat}}]/\field[\affinesemigp])$. As the $\field[\affinesemigp]$-module
$\module=\field[\affinesemigp_{\text{sat}}]/\field[\affinesemigp]$ is finitely generated by
Noether's normalization lemma, applying ~\cref{lem:finiteness_of_degree_pairs} provides an
alternative algebraic proof of the well-known
combinatorial result~\cite{HTY09}. Namely, the set of holes of a
semigroup $\affinesemigp$ is a finite union of translates of faces of $\affinesemigp$.
Later
on, we refer to degree pairs of $\module$ as \emph{void pairs}.
\end{enumerate}
\end{example}

\begin{example}[Continuation of \cref{ex:ishida}]
\label{ex:stdpairs}
\
\begin{enumerate}[leftmargin=*]
\item \label{enum:frob_stdpairs}
Degree pairs of $\module:=\field[x,xy,xy^3,xy^4]/\<xy\>$ are
\[
\text{(green) }\textcolor{green!80!black}{\left( \left[\begin{smallmatrix} 2\\ 3\end{smallmatrix}\right], \varnothing \right)},\text{(blue) }\textcolor{blue}{\left( \left[\begin{smallmatrix} 0\\ 0\end{smallmatrix}\right], F_{1} \right)}, \text{(red) }\textcolor{red}{\left( \left[\begin{smallmatrix} 0\\ 0\end{smallmatrix}\right], F_{2} \right),\left( \left[\begin{smallmatrix} 1\\ 3\end{smallmatrix}\right], F_{2} \right), \left( \left[\begin{smallmatrix} 2\\ 6\end{smallmatrix}\right], F_{2} \right).}
\]
In \cref{fig:frob_stdpairs}, these are indicated by a green (dotted)
circle, a blue (dashed) line, and red (straight) lines. Each of the
standard pairs forms an overlap class. 
\item \label{enum:3d_stdpairs}
Let $\genset:=\left[\begin{smallmatrix}0 & 1 & 1 & 0 \\ 0 & 0 & 1 & 1 \\ 1 & 1 & 1 & 1 \end{smallmatrix}\right]$. Degree pairs of $\module:=\field\left[\N\genset\right]/\midI$ where $\midI = \left\<x^{2}z^{2}, x^{2}yz^{2},x^{2}y^{3}z^{3},x^{3}y^{3}z^{3}\right\>$ is a monomial ideal of $\field[z,xz,xyz,yz] \cong \field\left[\N\genset\right]$ are
\[
\text{(green) }\textcolor{green!80!black}{\left( \left[\begin{smallmatrix} 2\\ 2 \\ 2\end{smallmatrix}\right], \varnothing \right)}, \text{(blue) } \textcolor{blue}{\left( \left[\begin{smallmatrix} 0\\ 0 \\ 0\end{smallmatrix}\right], F_{4} \right)}, \text{(yellow) }\textcolor{yellow!80!black}{\left( \left[\begin{smallmatrix} 1\\ 0 \\ 1\end{smallmatrix}\right], F_{4} \right)}, \text{(red) }\textcolor{red}{\left( \left[\begin{smallmatrix} 1\\ 1 \\ 1\end{smallmatrix}\right], F_{4} \right)}.
\]
In~\cref{fig:3d_stdpairs}, these are indicated by a green circle, a
blue triangle (in $zy$-plane), a yellow triangle, and a red triangle
(in $x=1$ plane), respectively. As illustrated
in~\cref{fig:3d_stdpairs}, the yellow and red triangles represented by
$\left( \left[\begin{smallmatrix} 1\\ 0 \\ 1\end{smallmatrix}\right],
  F_{4} \right)$ and $\left( \left[\begin{smallmatrix} 1\\ 1 \\
      1\end{smallmatrix}\right], F_{4} \right)$ overlap. Hence,
the overlap classes of $\midI$ are 
\[
\left\{\left( \left[\begin{smallmatrix} 2\\ 2 \\
        2\end{smallmatrix}\right], \varnothing \right)\right\},
\left\{\left( \left[\begin{smallmatrix} 0\\ 0 \\
        0\end{smallmatrix}\right], F_{4} \right)\right\},
\left\{\left( \left[\begin{smallmatrix} 1\\ 0 \\
        1\end{smallmatrix}\right], F_{4} \right),\left(
    \left[\begin{smallmatrix} 1\\ 1 \\ 1\end{smallmatrix}\right],
    F_{4} \right)\right\}.
\]
Notably, the union $\left( \left[\begin{smallmatrix} 1\\ 0 \\ 1\end{smallmatrix}\right] +\N F_{4} \right)\cup\left( \left[\begin{smallmatrix} 1\\ 1 \\ 1\end{smallmatrix}\right]+\N F_{4} \right)$ is a subset of translates of faces represented by standard pairs of $\midI_{\facF_{4}}$, the localization of $\midI$ by a face $\facF_{4}$.
\end{enumerate}
\end{example}

\begin{figure*}[t!]
\centering
\begin{subfigure}[c]{0.45\linewidth}
\centering
\begin{tikzpicture}[scale=0.85]

\fill[idealregioncolor] (1,1) -- (2.25,6) -- (3.5,6) -- (3.5,1) -- cycle ;
  \draw[step=1cm,gray,very thin] (0,0) grid (3,6);
  \draw [<->] (0,6.5) node (yaxis) [above] {$y$}
        |- (3.5,0) node (xaxis) [right] {$x,\facF_{1}$};

\draw[->](0,0) -- (1.625,6.5);

\node[above] at (1.625,6.5) {$\facF_{2}$};
\draw[black,fill=stdmcolor] (0,0) circle (3pt);

\draw[black,fill=stdmcolor] (2,0) circle (3pt);
\draw[black,fill=stdmcolor] (3,0) circle (3pt);
\draw[black,fill=stdmcolor] (3,0) circle (3pt);
\draw[black,fill=stdmcolor] (2,6) circle (3pt);

\foreach \y in {0,3,4}
\draw[black,fill=stdmcolor] (1,\y) circle (3pt);

\foreach \y in {0,3,4,5}
\draw[black,fill=stdmcolor] (2,3) circle (3pt);
\draw[black,fill=white] (1,2) circle (3pt);

\draw[black,fill=idealcolor] (3,6) circle (3pt);
\draw[black,fill=idealcolor] (1,1) circle (3pt);
\foreach \x in {2}
\foreach \y in {1,2,4,5}
\draw[black,fill=idealcolor] (\x,\y) circle (3pt);

\foreach \x in {3}
\foreach \y in {1,...,5}
\draw[black,fill=idealcolor] (\x,\y) circle (3pt);

\draw[red,ultra thick,->] (0,0) -- (1.625,6.5);
\draw[red,ultra thick,->] (1,3) -- (1.875,6.5);
\draw[red,ultra thick,->] (2,6) -- (2.125,6.5);
\draw[blue,loosely dashed,ultra thick,->] (0,0) -- (3.5,0);
\draw[green!80!black,densely dotted,ultra thick] (2,3) circle (10 pt);

\end{tikzpicture}
\caption{$Q = \N \left[\begin{smallmatrix}1 & 1 & 1 & 1 \\ 0 & 1 & 3 & 4  \end{smallmatrix}\right]$ and $\deg(\midI)=\left\< \left[\begin{smallmatrix} 1 \\ 1 \end{smallmatrix}\right]\right\>$}
\label{fig:frob_stdpairs}
\end{subfigure}
\begin{subfigure}[c]{0.45\linewidth}
        \centering
	 \tdplotsetmaincoords{90}{90} 
        \begin{tikzpicture}[tdplot_main_coords, scale=1]
	\tdplotsetrotatedcoords{40}{-10}{30}
	\fill[idealregioncolor, fill opacity=0.8,tdplot_rotated_coords] (2,0,2) -- (2,1,2) -- (3,3,3) -- (4,4,4) -- (4,0,4) -- cycle;
	\fill[idealregioncolor, fill opacity=0.8,tdplot_rotated_coords] (2,0,2) -- (2,1,2) -- (2,4,4) -- (2,0,4) -- cycle;
	\fill[idealregioncolor, fill opacity=0.8,tdplot_rotated_coords] (2,4,4) -- (2,0,4) -- (4,0,4) -- (4,4,4)-- cycle;

	\fill[yellow!80, fill opacity=0.8,tdplot_rotated_coords] (1,0,1) -- (1,3,4) -- (1,0,4) -- cycle;
	\fill[red!80, fill opacity=0.8,tdplot_rotated_coords] (1,1,1) -- (1,4,4) -- (1,1,4) -- cycle;
	\fill[orange!80, fill opacity=0.8,tdplot_rotated_coords] (1,1,2) -- (1,1,4) -- (1,3,4) -- cycle;
	\fill[blue!20,fill opacity=0.8,tdplot_rotated_coords] (0,0,0) -- (0,4,4) -- (0,0,4) -- cycle;
	\draw[green!80!black,ultra thick,tdplot_rotated_coords] (2,2,2) circle (6 pt);
	\foreach \x in {0,...,4}
	{
		\draw[step=1cm,gray,very thin, tdplot_rotated_coords] (\x,0,0) -- (\x,4,0);
		\draw[step=1cm,gray,very thin, tdplot_rotated_coords] (0,\x,0) -- (4,\x,0);
		\draw[step=1cm,gray,very thin, tdplot_rotated_coords] (\x,0,1) -- (\x,4,1);
		\draw[step=1cm,gray,very thin, tdplot_rotated_coords] (0,\x,1) -- (4,\x,1);
		\draw[step=1cm,gray,very thin, tdplot_rotated_coords] (\x,0,2) -- (\x,4,2);
		\draw[step=1cm,gray,very thin, tdplot_rotated_coords] (0,\x,2) -- (4,\x,2);
		\draw[step=1cm,gray,very thin, tdplot_rotated_coords] (\x,0,3) -- (\x,4,3);
		\draw[step=1cm,gray,very thin, tdplot_rotated_coords] (0,\x,3) -- (4,\x,3);
		\draw[step=1cm,gray,very thin, tdplot_rotated_coords] (\x,0,4) -- (\x,4,4);
		\draw[step=1cm,gray,very thin, tdplot_rotated_coords] (0,\x,4) -- (4,\x,4);

		\draw[step=1cm,gray,very thin, tdplot_rotated_coords] (0,\x,0) -- (0,\x,4);
		\draw[step=1cm,gray,very thin, tdplot_rotated_coords] (0,0,\x) -- (0,4,\x);
		\draw[step=1cm,gray,very thin, tdplot_rotated_coords] (1,\x,0) -- (1,\x,4);
		\draw[step=1cm,gray,very thin, tdplot_rotated_coords] (1,0,\x) -- (1,4,\x);
		\draw[step=1cm,gray,very thin, tdplot_rotated_coords] (2,\x,0) -- (2,\x,4);
		\draw[step=1cm,gray,very thin, tdplot_rotated_coords] (2,0,\x) -- (2,4,\x);
		\draw[step=1cm,gray,very thin, tdplot_rotated_coords] (3,\x,0) -- (3,\x,4);
		\draw[step=1cm,gray,very thin, tdplot_rotated_coords] (3,0,\x) -- (3,4,\x);
		\draw[step=1cm,gray,very thin, tdplot_rotated_coords] (4,\x,0) -- (4,\x,4);
		\draw[step=1cm,gray,very thin, tdplot_rotated_coords] (4,0,\x) -- (4,4,\x);
	};
	
	\draw[thick,->,tdplot_rotated_coords] (0,0,0) -- (6,0,0) node[anchor=south west]{$x$}; 
	\draw[thick,->,tdplot_rotated_coords] (0,0,0) -- (0,5.5,0) node[anchor=south east]{$y$}; 
	\draw[thick,->,tdplot_rotated_coords] (0,0,0) -- (0,0,5.5) node[anchor=south]{$z$};
	\draw[thick,->,tdplot_rotated_coords] (0,0,0) -- (0,4.5,4.5);
	\draw[thick,dashed,->,tdplot_rotated_coords] (0,0,0) -- (4.5,0,4.5);
	\draw[thick, ->,tdplot_rotated_coords] (0,0,0) -- (4.5,4.5,4.5);

	\foreach \y in {0,...,4}
	\draw[black,fill=stdmcolor,tdplot_rotated_coords] (0,0,\y) circle (2 pt);
	\draw[black,fill=stdmcolor,tdplot_rotated_coords] (0,1,1) circle (2 pt);
	\draw[black,fill=stdmcolor,tdplot_rotated_coords] (0,1,2) circle (2 pt);
	\draw[black,fill=stdmcolor,tdplot_rotated_coords] (0,2,2) circle (2 pt);
	\draw[black,fill=stdmcolor,tdplot_rotated_coords] (0,1,3) circle (2 pt);
	\draw[black,fill=stdmcolor,tdplot_rotated_coords] (0,2,3) circle (2 pt);
	\draw[black,fill=stdmcolor,tdplot_rotated_coords] (0,3,3) circle (2 pt);
	\draw[black,fill=stdmcolor,tdplot_rotated_coords] (0,1,4) circle (2 pt);
	\draw[black,fill=stdmcolor,tdplot_rotated_coords] (0,2,4) circle (2 pt);
	\draw[black,fill=stdmcolor,tdplot_rotated_coords] (0,3,4) circle (2 pt);
	\draw[black,fill=stdmcolor,tdplot_rotated_coords] (0,4,4) circle (2 pt);

	\foreach \y in {0,...,3}
	\draw[black,fill=stdmcolor,tdplot_rotated_coords] (1,0,1+\y) circle (2 pt);

	\foreach \y in {0,...,3}
	\draw[black,fill=stdmcolor,tdplot_rotated_coords] (1,1+\y,1+\y) circle (2 pt);
	
	\draw[black,fill=stdmcolor,tdplot_rotated_coords] (2,2,2) circle (2 pt);

	\draw[black,fill=stdmcolor,tdplot_rotated_coords] (1,1,2) circle (2 pt);
	\draw[black,fill=stdmcolor,tdplot_rotated_coords] (1,1,3) circle (2 pt);
	\draw[black,fill=stdmcolor,tdplot_rotated_coords] (1,2,3) circle (2 pt);
	\draw[black,fill=stdmcolor,tdplot_rotated_coords] (1,1,4) circle (2 pt);
	\draw[black,fill=stdmcolor,tdplot_rotated_coords] (1,2,4) circle (2 pt);
	\draw[black,fill=stdmcolor,tdplot_rotated_coords] (1,3,4) circle (2 pt);

	\draw[black,fill=idealcolor,tdplot_rotated_coords] (2,0,2) circle (2 pt);
	\draw[black,fill=idealcolor,tdplot_rotated_coords] (2,1,2) circle (2 pt);
	
	\draw[black,fill=idealcolor,tdplot_rotated_coords] (2,0,3) circle (2 pt);
	\draw[black,fill=idealcolor,tdplot_rotated_coords] (2,1,3) circle (2 pt);
	\draw[black,fill=idealcolor,tdplot_rotated_coords] (2,2,3) circle (2 pt);
	\draw[black,fill=idealcolor,tdplot_rotated_coords] (2,3,3) circle (2 pt);
	\draw[black,fill=idealcolor,tdplot_rotated_coords] (3,0,3) circle (2 pt);
	\draw[black,fill=idealcolor,tdplot_rotated_coords] (3,1,3) circle (2 pt);
	\draw[black,fill=idealcolor,tdplot_rotated_coords] (3,2,3) circle (2 pt);
	\draw[black,fill=idealcolor,tdplot_rotated_coords] (3,3,3) circle (2 pt);
	\draw[black,fill=idealcolor,tdplot_rotated_coords] (2,0,4) circle (2 pt);
	\draw[black,fill=idealcolor,tdplot_rotated_coords] (2,1,4) circle (2 pt);
	\draw[black,fill=idealcolor,tdplot_rotated_coords] (2,2,4) circle (2 pt);
	\draw[black,fill=idealcolor,tdplot_rotated_coords] (2,3,4) circle (2 pt);
	\draw[black,fill=idealcolor,tdplot_rotated_coords] (2,4,4) circle (2 pt);
	\draw[black,fill=idealcolor,tdplot_rotated_coords] (3,0,4) circle (2 pt);
	\draw[black,fill=idealcolor,tdplot_rotated_coords] (3,1,4) circle (2 pt);
	\draw[black,fill=idealcolor,tdplot_rotated_coords] (3,2,4) circle (2 pt);
	\draw[black,fill=idealcolor,tdplot_rotated_coords] (3,3,4) circle (2 pt);
	\draw[black,fill=idealcolor,tdplot_rotated_coords] (3,4,4) circle (2 pt);
	\draw[black,fill=idealcolor,tdplot_rotated_coords] (4,0,4) circle (2 pt);
	\draw[black,fill=idealcolor,tdplot_rotated_coords] (4,1,4) circle (2 pt);
	\draw[black,fill=idealcolor,tdplot_rotated_coords] (4,2,4) circle (2 pt);
	\draw[black,fill=idealcolor,tdplot_rotated_coords] (4,3,4) circle (2 pt);
	\draw[black,fill=idealcolor,tdplot_rotated_coords] (4,4,4) circle (2 pt);

	\end{tikzpicture}
	 \caption{$Q = \N \left[\begin{smallmatrix}0&1&0&1\\0&0&1&1\\ 1&1&1&1 \end{smallmatrix}\right]$ and $\deg(\midI)=\left\< \left[\begin{smallmatrix} 2& 2 & 2 & 3 \\ 0 & 1 & 3 & 3 \\ 2 & 2 & 3 & 3 \end{smallmatrix}\right]\right\>$}
	 \label{fig:3d_stdpairs}
    \end{subfigure}
\caption{Degree pairs of $\field[\affinesemigp]/\midI$}
\label{fig:stdpairs}
\end{figure*}

We assert that localization by a monomial prime ideal $P_{\facF}$
generates an injective map between sets of overlap classes of degree
pairs, $\degpo(\module)$ and $\degpo(\module_{P_{\facF}})$. For
economy of notation, let $\module_{\facF}$ be the localization of
$\module$ by a monomial prime ideal $P_{\facF}$ corresponding to a
face $\facF \in \facelattice(\affinesemigp)$. 

To begin, we see that the face lattice
$\facelattice(\affinesemigp-\N\facF)$ of the affine semigroup
$\affinesemigp-\N\facF$ arising from localization by the face $\facF$ can be
identified as a subset of the face lattice
$\facelattice(\affinesemigp)$ as follows. 

\begin{lemma}
\label{lem:localizationFaceBijection}
The maps
\begin{align*}
\facelattice(\affinesemigp-\N\facF) &\to \{ \facG \in \facelattice(\affinesemigp)\mid \facG \supset \facF \} &\text{ given by } & &\N\facG' &\mapsto \N\left(\facG' \cap \genset\right) \\
\{ \facG \in \facelattice(\affinesemigp)\mid \facG \supset \facF \} &\to \facelattice(\affinesemigp-\N\facF)  &\text{ given by } & &\N\facG &\mapsto \N\facG -\N\facF
\end{align*}
are bijective.
\end{lemma}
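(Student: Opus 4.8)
The plan is to reduce to the underlying cones and then transport the result back through the face correspondences $\facelattice(\affinesemigp)\cong\facelattice(\realize\affinesemigp)$ and $\facelattice(\affinesemigp-\N\facF)\cong\facelattice(\realize(\affinesemigp-\N\facF))$ given by intersection with the semigroup. Write $C:=\realize\affinesemigp$ and $F:=\realize\facF$; since $\affinesemigp-\N\facF=\affinesemigp+\Z\facF$ we have $\realize(\affinesemigp-\N\facF)=C+\Span F$, which by \cref{pro:localization_cone_structure} is the cumulative region $\regR_{\setS}$. Everything rests on the purely polyhedral claim: \emph{for a face $F$ of a cone $C$, the assignment $D\mapsto D+\Span F$ is an inclusion-preserving bijection from $\{D\in\facelattice(C)\mid D\supseteq F\}$ onto $\facelattice(C+\Span F)$, with inverse $D'\mapsto D'\cap C$.}

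For the polyhedral claim I would proceed in four short steps. (i) Every face $D'$ of $C+\Span F$ contains $\Span F$: writing $D'=(C+\Span F)\cap\vecc^{\perp}$ for a supporting functional $\vecc$, the fact that $\vecc$ has constant sign on the linear subspace $\Span F$ forces $\vecc|_{\Span F}\equiv 0$, hence $\Span F\subseteq\vecc^{\perp}$ and so $\Span F\subseteq D'$; moreover $D'\cap C=C\cap\vecc^{\perp}$ is a face of $C$, and it contains $F$ since $F\subseteq C$ and $F\subseteq\Span F\subseteq\vecc^{\perp}$. (ii) If $F\subseteq D$ are faces of $C$, pick $\vecc$ with $D=C\cap\vecc^{\perp}$; then $F\subseteq D$ gives $\vecc|_{\Span F}\equiv 0$, so $\vecc$ keeps its sign on $C+\Span F$, and splitting an arbitrary element as $y+z$ with $y\in C$, $z\in\Span F$ and using $\vecc\cdot z=0$ yields $(C+\Span F)\cap\vecc^{\perp}=D+\Span F$; in particular $D+\Span F$ is a face of $C+\Span F$. (iii)--(iv) The identities $(D+\Span F)\cap C=D$ for $F\subseteq D$ faces of $C$, and $(D'\cap C)+\Span F=D'$ for $D'$ a face of $C+\Span F$, follow by the same $x=y+z$ splitting, now using crucially that $F$ is a \emph{face} of $C$ (so $y+z'\in D$ with $y\in C$ and $z'\in F$ forces $y\in D$) and that $\Span F=F-F$. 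Steps (iii)--(iv), where the face property of $F$ is genuinely invoked, are the parts I expect to need the most care to write cleanly.

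It then remains to reconcile the cone statement with the two maps in the lemma. For a face $\N\facG$ of $\affinesemigp$ with $\facG\supseteq\facF$, its underlying cone is $\realize\facG$, so $\realize(\N\facG-\N\facF)=\realize\facG+\Span F$ is a face of $C+\Span F$ by the claim; hence $\N\facG-\N\facF$ is a face of $\affinesemigp-\N\facF$, and the second map is well defined. For the first map, let $\N\facG'$ be a face of $\affinesemigp-\N\facF$ and let $D'$ be its underlying cone, so that $\facG'\cap\genset$ is the set of generators of $\affinesemigp$ lying in $D'$. Since $D'\cap C$ is a face of $C=\realize\genset$, it equals the cone on the generators it contains, i.e.\ $D'\cap C=\realize(\facG'\cap\genset)$; and every face of $\affinesemigp=\N\genset$ is recovered as $\N$ of the generators it contains, so the face of $\affinesemigp$ associated with $D'\cap C$ is precisely $\N(\facG'\cap\genset)$, and it contains $\facF=F\cap\affinesemigp$ because $D'\cap C\supseteq F$; thus the first map is well defined. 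That the two maps are mutually inverse---and hence bijective---then follows at once from identities (iii) and (iv) combined with the two face correspondences. (One could instead work entirely inside the arrangement $\arngmt$: by \cref{pro:localization_cone_structure} the faces of $\regR_{\setS}$ are exactly the cumulative regions $\regR_{\setS'}$, $\setS'\supseteq\setS$, that occur, and via the embedding $\facelattice(\affinesemigp)\hookrightarrow\regionr{\arngmt}$ these index precisely the faces of $\affinesemigp$ containing $\facF$; the cone-span formulation above seems cleaner to write.)
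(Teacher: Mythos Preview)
Your proof is correct and rests on the same key observation as the paper's: a linear functional $\vecc$ supporting a face of the localized object must vanish on $\Span\facF$, so faces of $\affinesemigp-\N\facF$ correspond exactly to faces $\facG\supseteq\facF$ of $\affinesemigp$. The paper argues directly at the semigroup level: it fixes $\facG=\face_{\vecc}(\affinesemigp)$ and computes $\face_{\vecc}(\affinesemigp-\N\facF)$, showing it is empty when $\facF\not\subseteq\facG$ (because $\langle\vecc,-m\vecf\rangle$ diverges for $\vecf\in\facF\smallsetminus\facG$) and equals $\facG\cup(-\facF)$ otherwise. You instead isolate a clean polyhedral statement about cones---that $D\mapsto D+\Span F$ is a bijection from $\{D\in\facelattice(C)\mid D\supseteq F\}$ onto $\facelattice(C+\Span F)$---and then transport it through the correspondences $\facelattice(\affinesemigp)\cong\facelattice(\realize\affinesemigp)$ and $\facelattice(\affinesemigp-\N\facF)\cong\facelattice(\realize(\affinesemigp-\N\facF))$. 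Your packaging is more modular and makes the mutual-inverse verification explicit (your steps (iii)--(iv), where the face property of $F$ is genuinely used), which the paper leaves implicit; the paper's version is shorter and stays inside the semigroup throughout. Either route is fine.
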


\begin{proof}
It suffices to show that $\facelattice(\affinesemigp -
\N\facF)$ and  $\{\facG \in \facelattice(\affinesemigp)\mid
\facG \supseteq \facF\}$ are in bijection.
Fix $\facG \in
\facelattice(\affinesemigp)$. Let $\vecc$ be an outer normal vector so
that ${\facG} =
\text{face}_{\vecc}({\affinesemigp})$. Recall that the absolute maximum of the functional $\<
\vecc,-\>$ on $\realize{\affinesemigp}$ is zero since every face of
$\affinesemigp$ contains the origin. 

We claim
\[
\text{face}_{\vecc}(\affinesemigp-\N\facF) = \begin{cases}
  \varnothing & \text{ if } \facF\not\subseteq \facG \\
  \facG\cup(-\facF) & \text{ if } \facF \subseteq
  \facG.  \end{cases}
\]

If $\facF$ is not a face of $\facG$, there exists a nonzero element $\vecf \in \facF \minus \facG$ such that $\< \vecc,\vecf\> < \< \vecc,\vecg \>=0$ for any $\vecg \in \facG$. Since $\< \vecc,-\smallm\vecf\>$ diverges when $\smallm \to \infty$, $\text{face}_{\vecc}(\affinesemigp-\N\facF)=\varnothing$. If $\facF$ is a face of $\facG$, $\facG\cup(-\facF) \subseteq face_{\vecc}(\affinesemigp-\N\facF)$. Pick $\vecb \in face_{\vecc}(\affinesemigp-\N\facF) \cap (\affinesemigp-\N\facF)$. Then, $\vecb = \veca - \vecf$ for some $\veca \in \affinesemigp$ and $\vecf \in \N\facF$. Since $0=\< \vecc,\vecb\> = \< \vecc,\veca\>$, $\veca \in \N\facG$. Thus, $\vecb\in\facG\cup(-\facF)$. 
\end{proof}

As a consequence of this result, we can express any face of
$\affinesemigp-\N\facF$ as $\N\facG -\N\facF$ for some face $\facG \in
\facelattice(\affinesemigp)$ such that $\facG \supset
\facF$. Likewise, $(\veca,\facG \cup (-\facF))$ denotes a degree pair
of a $\field[\affinesemigp-\N\facF]$-module $\module_{\facF}$.

Our next step is to 
show that each degree pair of a localization of $\module$ can be
lifted to a degree pair of $\module$. 

\begin{lemma}
\label{lem:stdPairEquivalentDefinition}
Suppose $\facG \supseteq \facF \in \facelattice(\affinesemigp)$. Given a degree pair $(\veca,\facG \cup(-\facF))$ of $\module_{\facF}$, there exists $\veca' \in \deg(\module)$ such that $(\veca', \facG \cup(-\facF)) = (\veca,\facG \cup(-\facF))$ and $(\veca',\facG)$ is a degree pair of $\module$.
\end{lemma}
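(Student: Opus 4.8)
The plan is to lift the degree pair in two stages. \textbf{Stage 1} produces $\vecz\in\Z\facF$ with $\veca+\vecz+\N\facG\subseteq\deg(\module)$; this is the substantive step. \textbf{Stage 2} takes the resulting proper pair $(\veca+\vecz,\facG)$ of $\module$, enlarges it to a degree pair of $\module$, and uses the maximality of $(\veca,\facG\cup(-\facF))$ in $\module_\facF$ to verify that the enlargement alters neither the face $\facG$ nor the $\Z\facF$-coset of $\veca+\vecz$. Two facts are used repeatedly. First, fixing $\vecf^{*}\in\relint(\N\facF)$, localizing at $P_\facF$ amounts to inverting $\vart^{\vecf^{*}}$, so $(\module_\facF)_{\vecb}=\varinjlim_{n}\module_{\vecb+n\vecf^{*}}$; in particular $\deg(\module_\facF)=\deg(\module)+\Z\facF$ and $\vecb\in\deg(\module_\facF)$ implies $\vecb+n\vecf^{*}\in\deg(\module)$ for all $n\gg0$. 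Second, by \cref{lem:finiteness_of_degree_pairs}, $\deg(\module)$ and $\deg(\module_\facF)$ are finite unions of translated faces, every proper pair of such a module lies below a degree pair (an ascending chain argument), and the inclusion-maximal translated faces inside $\deg(\module_\facF)$ are exactly the (translated faces of the) degree pairs of $\module_\facF$.

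\textbf{Stage 1.} Since $(\veca,\facG\cup(-\facF))$ is a degree pair of $\module_\facF$ we have $\veca+\N\facG-\N\facF\subseteq\deg(\module_\facF)=\deg(\module)+\Z\facF$. Write $\deg(\module)=\bigcup_{i}(\vecc_{i}+\N\facH_{i})$ as a finite union of translated faces; then each $\vecc_{i}+\N\facH_{i}+\Z\facF$ lies in $\deg(\module_\facF)$, and those with $\facF\subseteq\facH_{i}$ are genuine translated faces of $\affinesemigp-\N\facF$ by \cref{lem:localizationFaceBijection}. The translated face $\veca+\N\facG-\N\facF$ is full-dimensional in the affine subspace $\veca+\Span_{\R}(\facG)$, hence is not covered by finitely many proper affine subspaces of it; a dimension count against the finitely many sets $\vecc_{i}+\N\facH_{i}+\Z\facF$ therefore singles out an index $i^{*}$ with $\facG\subseteq\facH_{i^{*}}$ (so also $\facF\subseteq\facH_{i^{*}}$) whose piece ``dominates'' $\veca+\N\facG-\N\facF$: for a Zariski-dense set of $\vecg\in\N\facG$ one has $\veca+\vecg+n\vecf^{*}\in\vecc_{i^{*}}+\N\facH_{i^{*}}$ for all $n\gg0$. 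From this I extract $\vecz\in\Z\facF$ for which $\veca+\vecz$ lies deep in the relative interior of $\vecc_{i^{*}}+\realize(\facH_{i^{*}})$, so that $\veca+\vecz+\N\facG\subseteq\vecc_{i^{*}}+\N\facH_{i^{*}}\subseteq\deg(\module)$ (using $\facG\subseteq\facH_{i^{*}}$). I expect this last extraction to be the main obstacle: when $\facF$ is a proper subface of $\facH_{i^{*}}$ the direction $\vecf^{*}$ runs only along a boundary face of $\realize(\facH_{i^{*}})$, so the ``deep'' witness has to be assembled from $\vecf^{*}$-translates together with interior directions coming from a generic $\vecg\in\N\facG$, and then brought back to the $\Z\facF$-coset of $\veca$ by cancelling inside the group $\Z\facH_{i^{*}}$; choosing $i^{*}$, $\vecg$, and the translate compatibly is the technical heart of the argument.

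\textbf{Stage 2.} Set $\veca''=\veca+\vecz$; then $(\veca'',\facG)$ is a proper pair of $\module$ with $\veca''-\veca\in\Z\facF$, so $(\veca'',\facG\cup(-\facF))=(\veca,\facG\cup(-\facF))$. By \cref{lem:finiteness_of_degree_pairs}, $(\veca'',\facG)$ lies below some degree pair $(\veca',\facG')$ of $\module$. Then $\N\facG\subseteq\N\facG'$, so $\facG\subseteq\facG'$ and hence $\facF\subseteq\facG\subseteq\facG'$; thus $(\veca',\facG'\cup(-\facF))$ is a proper pair of $\module_\facF$ lying weakly above $(\veca'',\facG\cup(-\facF))=(\veca,\facG\cup(-\facF))$. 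Since the latter is maximal in $\module_\facF$, equality holds: $\veca'+\N\facG'-\N\facF=\veca+\N\facG-\N\facF$. Comparing faces gives $\facG'=\facG$ by \cref{lem:localizationFaceBijection}, and then, $\facF$ being a face of $\facG$, comparing translates gives $\veca'-\veca\in\Z\facF$. Hence $(\veca',\facG)\in\degp(\module)$, $(\veca',\facG\cup(-\facF))=(\veca,\facG\cup(-\facF))$, and $\veca'\in\deg(\module)$ (take $\vecg=0$ in $\veca'+\N\facG\subseteq\deg(\module)$), which is exactly the claim.
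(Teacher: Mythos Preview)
Your two-stage plan mirrors the paper's overall structure, and Stage~2 is essentially the paper's argument: enlarge to a degree pair $(\veca',\facG')$ of $\module$, then use maximality of $(\veca,\facG\cup(-\facF))$ in $\module_\facF$ to force $\facG'=\facG$ and $\veca'-\veca\in\Z\facF$. That part is fine.

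The real issue is Stage~1, and you say so yourself. You never actually perform the ``extraction'' of $\vecz\in\Z\facF$; you only describe what would have to happen and flag it as ``the main obstacle'' and ``the technical heart of the argument.'' As written this is a plan, not a proof. The specific difficulty you identify is genuine: from knowing $\veca+\vecg+n\vecf^{*}\in\vecc_{i^{*}}+\N\facH_{i^{*}}$ for a Zariski-dense set of $\vecg\in\N\facG$ and $n\gg0$, you need a \emph{single} $\vecz\in\Z\facF$ with $\veca+\vecz\in\vecc_{i^{*}}+\N\facH_{i^{*}}$, and when $\facF\subsetneq\facH_{i^{*}}$ you cannot simply push far enough along $\vecf^{*}$. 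Nothing in your sketch closes this gap. (There is also a small error in your preamble: $\deg(\module_\facF)=\deg(\module)+\Z\facF$ fails whenever $\module$ has $\facF$-torsion; only the inclusion $\deg(\module_\facF)\subseteq\deg(\module)+\Z\facF$ holds in general. You only use the correct direction, so this is cosmetic.)

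The paper takes a different and more direct route for Stage~1. Rather than a covering/dimension argument, it fixes a minimal generating set $\{m_1,\dots,m_l\}$ of $\module$, picks $\vecf\in\N\facF$ with $\veca+\vecf\in\affinesemigp$, and builds an explicit homogeneous element $m=\sum_i\vart^{\vecc_i}m_i$ of degree $\veca+\vecf$ whose image $m/\vart^{\vecf}$ in $\module_\facF$ is nonzero in degree $\veca$. It then argues that $(\veca+\vecf,\facG)$ is a proper pair of $\module$ by tracking this specific element: for any $\vecg\in\N\facG$ one checks $\vart^{\vecg}m\neq 0$ using that $\veca+\vecg\in\deg(\module_\facF)$. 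The element-level bookkeeping with generators replaces your polyhedral covering argument entirely; this is what you are missing, and it sidesteps the ``bring it back to the $\Z\facF$-coset'' problem you anticipated.
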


By abuse of notation, let $\vart^{-\infty}=0 \in
\field[\affinesemigp]$.

\begin{proof}
Assume that $\{m_{1},m_{2},\cdots, m_{l}\}$ is a minimal generating set of $\module$ with $\deg(m_{i})=\veca_{i}$. Select an appropriate $\vecf \in \N\facF$ so that $\veca+\vecf \in \affinesemigp$. Let $\vecc_{i}=\veca+\vecf-\veca_{i}$ if $\vecc_{i} \in \deg(\affinesemigp)$ and $\vart^{\vecc_{i}}m_{i} \neq 0$ or $\vecc_{i}=-\infty$ otherwise. Set $m:= \left(\sum_{i=1}^{l}\vart^{\vecc_{i}}m_{i}\right) \in \module$. Then, $m/\vart^{\vecf} \in \module_{\facF}$ is a nonzero homogeneous element of degree $\veca$; otherwise no element of $\module_{\facF}$ with degree $\veca$ can be generated. Hence, $m$ is a nonzero homogeneous element of degree $\veca+\vecf$. Also, $(\deg(m),\facG)$ is a proper pair of $\module$, otherwise, if $\veca+\vecf+\vecg \not\in \deg(\module)$, then no element of $\module_{\facF}$ with degree $\veca+\vecf+\vecg$ exists. Thus, a degree pair $(\veca',\facG')$ exists that contains $(\deg(m),\facG)$ with $\facG' \supseteq \facG$. We may assume that $m'=\sum_{i=1}^{l}\vart^{\vecc_{i}'}m_{i}$ is of order $\veca'$ with $\vecc_{i} = \veca'-\veca_{i} \in \affinesemigp$ or $\vecc_{i}=-\infty$. Since $\deg(m) = \veca' +\vecg'$ for some $\vecg' \in \N\facG'$ and $\vecc_{i}' \neq -\infty$ if $\vecc_{i} \neq -\infty$, $\vart^{\vecg}m' = m$.

Furthermore, we claim $\facG' = \facG$. Suppose not, then we can have $\vecc \in \N\facG' \smallsetminus \N\facG$ such that $\veca+\vecc \not\in \deg(\module_{\facF})$ by the maximality of $(\veca,\facG \cup(-\facF))$. Thus, $\vart^{\vecc} \cdot m/\vart^{\vecf}=0$, which implies $\vart^{\vecc+\vecg'} \cdot m'=0$, contradicting the fact that $\vecc+\vecg'+\veca' \in \deg(\module)$. Hence, $\vecg' \in \N\facG$.

Finally, we assert $(\veca',\facG\cup(-\facF)) =
(\veca,\facG\cup(-\facF))$. Indeed $\veca= \veca' +\vecg'-\vecf$
indicates that $\veca \in \veca'+\N\left(\facG\cup(-\facF)\right)$,
implying $(\veca',\facG\cup(-\facF)) >
(\veca,\facG\cup(-\facF))$. Also, $(\veca',\facG\cup(-\facF))$ is a
proper pair; otherwise, we would not have an element whose degree is
in $\veca+\N\left(\facG\cup(-\facF)\right)$, contradiction. These two
degree pairs are same due to the maximality of $(\veca,\facG\cup(-\facF))$.
\end{proof}

The choice of $\veca'$ is not unique;
see~\cref{ex:std_localization}(\ref{enum:3d_localization}). Fortunately,
their overlap class is uniquely determined.

\begin{lemma}
\label{lem:lifting_of_overlap_is_unique}
Suppose $\facG \supseteq \facF \in \facelattice(\affinesemigp)$. Given
two overlapping degree pairs $(\veca,\facG \cup(-\facF))$ and
$(\vecb,\facG \cup(-\facF))$ of $\module_{\facF}$, let $\veca'$ and
$\vecb'$ be degrees of $\deg(M)$ chosen by~\cref{lem:stdPairEquivalentDefinition}. Then, $(\veca',\facG)$ and $(\vecb',\facG)$ overlap.
\end{lemma}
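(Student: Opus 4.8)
The plan is to reduce the claim to a statement about the lifted degrees $\veca'$ and $\vecb'$ that follows from the explicit construction in~\cref{lem:stdPairEquivalentDefinition}. Recall that $\veca'$ (resp. $\vecb'$) is obtained by first picking $\vecf \in \N\facF$ with $\veca + \vecf \in \affinesemigp$, forming the homogeneous element $m = \sum_i \vart^{\vecc_i}m_i$ of degree $\veca+\vecf$, and then enlarging $(\deg(m),\facG)$ to a degree pair; the proof of that lemma shows $\veca' + \vecg' - \vecf = \veca$ for some $\vecg' \in \N\facG$, so that $\veca \in \veca' + \N(\facG \cup (-\facF))$ and symmetrically $\vecb \in \vecb' + \N(\facG \cup (-\facF))$. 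First I would fix a point $\vecd$ in the nonempty overlap $(\veca + \N(\facG\cup(-\facF))) \cap (\vecb + \N(\facG\cup(-\facF)))$, which lies in $\deg(\module_\facF)$.

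Next I would transport $\vecd$ into the semigroup itself. Write $\vecd = \veca + \vecg_1 - \vecf_1 = \vecb + \vecg_2 - \vecf_2$ with $\vecg_1,\vecg_2 \in \N\facG$ and $\vecf_1,\vecf_2 \in \N\facF$; choosing a single $\vecf'' \in \N\facF$ dominating all the $\facF$-parts that appear (for $\veca'$, $\vecb'$, $\vecd$), the degree $\vecd + \vecf''$ can be written simultaneously as $\veca' + (\text{element of }\N\facG) + (\text{element of }\N\facF)$ — since $\facF \subseteq \facG$, the $\facF$-part is absorbed into $\N\facG$ — hence as $\veca' + \vecg_1'$ with $\vecg_1' \in \N\facG$, and likewise as $\vecb' + \vecg_2'$ with $\vecg_2' \in \N\facG$. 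The point is that $\veca' + \N\facG \subseteq \deg(\module)$ and $\vecb' + \N\facG \subseteq \deg(\module)$ because $(\veca',\facG)$ and $(\vecb',\facG)$ are proper pairs of $\module$, so $\vecd + \vecf'' \in (\veca' + \N\facG) \cap (\vecb' + \N\facG)$. This witnesses that $(\veca',\facG)$ and $(\vecb',\facG)$ overlap, which is exactly the assertion.

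The one point requiring care — and the main obstacle — is the bookkeeping that lets me absorb the $-\N\facF$ contributions into $\N\facG$ after translating by a suitable $\vecf'' \in \N\facF$: I must check that the element of $\N(\facG\cup(-\facF))$ expressing $\vecd$ in terms of $\veca'$, once one adds enough of $\N\facF$, genuinely lies in $\N\facG$ (not merely in $\Z\facG$), using that $\facF\subseteq\facG$ so $\N\facF\subseteq\N\facG$ and $\N\facG + \N\facF = \N\facG$. Concretely, if $\vecd = \veca' + \vech - \vecf'$ with $\vech \in \N\facG$, $\vecf' \in \N\facF$, then $\vecd + \vecf' = \veca' + (\vech + \text{nothing})$, but to get a \emph{common} $\vecf''$ working for both $\veca'$ and $\vecb'$ I take $\vecf'' = \vecf'_{\veca} + \vecf'_{\vecb}$ (sum of the two $\facF$-parts) and note $\vecd + \vecf'' = \veca' + (\vech_{\veca} + \vecf'_{\vecb})$ with $\vech_\veca + \vecf'_\vecb \in \N\facG + \N\facF \subseteq \N\facG$, and symmetrically for $\vecb'$; then $\vecd + \vecf''$ lies in both $\veca' + \N\facG$ and $\vecb' + \N\facG$. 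Everything else is immediate from~\cref{lem:stdPairEquivalentDefinition} and~\cref{lem:localizationFaceBijection}.
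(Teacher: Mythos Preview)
Your argument is correct and is essentially the same as the paper's: both pick a common point of the two $\N(\facG\cup(-\facF))$-translates (the paper uses that $(\veca',\facG\cup(-\facF))=(\veca,\facG\cup(-\facF))$ and $(\vecb',\facG\cup(-\facF))=(\vecb,\facG\cup(-\facF))$ to go straight to $\veca'+\vecg_a-\vecf_a=\vecb'+\vecg_b-\vecf_b$), then add the two $\N\facF$-parts to clear the negative contributions and land in $(\veca'+\N\facG)\cap(\vecb'+\N\facG)$, using $\N\facF\subseteq\N\facG$. Your more explicit bookkeeping with a common $\vecf''=\vecf'_\veca+\vecf'_\vecb$ is exactly the same maneuver, just spelled out in more detail.
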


\begin{proof}
From $(\veca,\facG \cup(-\facF))= (\veca',\facG \cup(-\facF))$ and
$(\vecb,\facG \cup(-\facF)) = (\vecb',\facG \cup(-\facF))$, there
exists $\vecg_{a},\vecg_{b} \in \N\facG, \vecf_{a},\vecf_{b} \in
\N\facF$ such that $\veca'+\vecg_{a}-\vecf_{a} =
\veca+\vecg_{b}-\vecf_{b}$. Hence, $\veca'+\vecg_{a}+\vecf_{b}=
\veca+\vecg_{b}+\vecf_{a} \in \deg(\module_{\facF})$. Again,
$\veca'+\vecg_{a}+\vecf_{b} \in \deg(\module)$ when a set of minimal
generators is fixed and a similar construction in the proof
of~\cref{lem:stdPairEquivalentDefinition} is used.  
\end{proof}

As a consequence of the lemma above, we obtain the desired injective map
between sets of overlap classes under localization. We provide a new notation to describe this map. Given an overlap class $[\veca,\facF] \in \degpo(\module)$, let $\bigcup[\veca,\facF]:=\bigcup_{(\vecb,\facF) \in [\veca,\facF]}\vecb + \N\facF$. In other words, $\bigcup[\veca,\facF]$ is the union of all translates of faces represented by degree pairs in $[\veca,\facF]$.

\begin{theorem}
\label{thm:map_between_sets_of_equiv_classes_of_degree_pairs}
Suppose $\facF \subseteq \facG \subseteq \facH$ are faces of $\affinesemigp$. Given an overlap class $\left[\veca,\facH \cup (-\facG)\right] \in \degpo(\module_{\facG})$, there exists a unique overlap class $\left[\veca', \facH \cup (-\facF)\right] \in\degpo(\module_{\facF})$ such that
\[
\bigcup[\veca',\facH\cup(-\facF)] =\left(\bigcup[\veca,\facH\cup(-\facG)]\right)\cap \deg(\module_{\facF}).
\]
We denote this injection $\left[\veca', \facH \cup (-\facF)\right] =
\rest_{\facG, \facF}\left(\left[\veca,\facH \cup
    (-\facG)\right]\right)$, and call it \emph{restriction}. These
restriction maps satisfy that for any $\facF \subseteq \facG,\facG'
\subseteq \facH$, $\rest_{\facH,\facG} \circ\rest_{\facG,\facF} =
\rest_{\facH,\facF}= \rest_{\facH,\facG'} \circ\rest_{\facG',\facF}$.  
\end{theorem}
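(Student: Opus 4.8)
The strategy is to derive all four assertions---existence, uniqueness, the union formula, and functoriality---from the two lifting results \cref{lem:stdPairEquivalentDefinition} and \cref{lem:lifting_of_overlap_is_unique}, applied not to $\module$ itself but to the localization $\module_\facF$, together with transitivity of localization. Since $\module$ is finitely generated and $\affinesemigp-\N\facF$ is again an affine semigroup, $\module_\facF$ is a finitely generated $\Z^\dimd$-graded $\field[\affinesemigp-\N\facF]$-module, and by transitivity of localization (at the ring level this is \cref{lem:localization_proposition} applied twice) $\module_\facG\cong(\module_\facF)_{\facG\cup(-\facF)}$ as graded modules, where $\facG\cup(-\facF)=\N\facG-\N\facF$ is the face of $\affinesemigp-\N\facF$ matched with $\facG$ under \cref{lem:localizationFaceBijection}. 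Applying \cref{lem:localizationFaceBijection} once more shows that the face $\facH\cup(-\facG)$ of $\affinesemigp-\N\facG$ and the face $\facH\cup(-\facF)$ of $\affinesemigp-\N\facF$ both correspond to the face $\facH$ of $\affinesemigp$; consequently \cref{lem:stdPairEquivalentDefinition} and \cref{lem:lifting_of_overlap_is_unique}, read with $(\module,\affinesemigp)$ replaced by $(\module_\facF,\affinesemigp-\N\facF)$ and with the roles of ``$\facF$'' and ``$\facG$'' played by $\facG\cup(-\facF)$ and $\facH\cup(-\facF)$, produce from the degree pair $(\veca,\facH\cup(-\facG))$ of $\module_\facG$ a degree $\veca'\in\deg(\module_\facF)$ with $(\veca',\facH\cup(-\facG))=(\veca,\facH\cup(-\facG))$ and with $(\veca',\facH\cup(-\facF))$ a degree pair of $\module_\facF$ whose overlap class $[\veca',\facH\cup(-\facF)]$ depends only on $[\veca,\facH\cup(-\facG)]$. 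This class is our candidate for $\rest_{\facG,\facF}([\veca,\facH\cup(-\facG)])$.

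Next I would prove the identity
\[
\bigcup[\veca',\facH\cup(-\facF)]=\Bigl(\bigcup[\veca,\facH\cup(-\facG)]\Bigr)\cap\deg(\module_\facF).
\]
For ``$\subseteq$'', let $(\vecb',\facH\cup(-\facF))$ lie in the class $[\veca',\facH\cup(-\facF)]$; it is a proper pair, so $\vecb'+\N(\facH\cup(-\facF))\subseteq\deg(\module_\facF)$, and localizing the containment $\vecb'+\N\facH-\N\facF\subseteq\deg(\module_\facF)$ at $\facG$ shows that $(\vecb',\facH\cup(-\facG))$ is a proper pair of $\module_\facG$; the maximality of $(\vecb',\facH\cup(-\facF))$ in $\module_\facF$ forbids any proper pair of $\module_\facG$ on a face strictly containing $\facH\cup(-\facG)$ through $\vecb'$, so $(\vecb',\facH\cup(-\facG))$ is in fact a degree pair of $\module_\facG$; it overlaps $(\veca',\facH\cup(-\facG))=(\veca,\facH\cup(-\facG))$ because localization preserves the overlap of $(\vecb',\facH\cup(-\facF))$ with $(\veca',\facH\cup(-\facF))$, hence $\vecb'+\N(\facH\cup(-\facF))\subseteq\vecb'+\N(\facH\cup(-\facG))\subseteq\bigcup[\veca,\facH\cup(-\facG)]$. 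For ``$\supseteq$'', take $\vecx\in\deg(\module_\facF)$ with $\vecx\in\vecc+\N(\facH\cup(-\facG))$ for some degree pair $(\vecc,\facH\cup(-\facG))\in[\veca,\facH\cup(-\facG)]$. Starting from a nonzero homogeneous element of $\module_\facF$ in degree $\vecx$ and re-running the element-level construction in the proof of \cref{lem:stdPairEquivalentDefinition} (choosing a suitable monomial and forming a corresponding combination $\sum_i\vart^{\vecc_i}m_i$ of a minimal generating set), one obtains a degree pair of $\module_\facF$ through $\vecx$; its face cannot strictly contain $\facH\cup(-\facF)$, for otherwise localizing it back at $\facG$ would contradict the maximality of $(\vecc,\facH\cup(-\facG))$ in $\module_\facG$; and \cref{lem:lifting_of_overlap_is_unique} places this degree pair in $[\veca',\facH\cup(-\facF)]$, so $\vecx\in\bigcup[\veca',\facH\cup(-\facF)]$. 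Uniqueness of the class satisfying the identity is then immediate (two overlap classes of $\module_\facF$ with the same union share a degree, hence overlap, hence coincide), and injectivity of $\rest_{\facG,\facF}$ follows because $\bigcup[\veca,\facH\cup(-\facG)]$ is recovered from $\rest_{\facG,\facF}([\veca,\facH\cup(-\facG)])$ by expanding each of its member translates to the face $\facH\cup(-\facG)$.

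For functoriality, the composite $\rest_{\facH,\facG}\circ\rest_{\facG,\facF}$ sends an overlap class $[\veca,\facE\cup(-\facH)]$ of $\module_\facH$ (for a face $\facE\supseteq\facH$ of $\affinesemigp$) to the unique overlap class of $\module_\facF$ with union $\bigl(\bigcup[\veca,\facE\cup(-\facH)]\cap\deg(\module_\facG)\bigr)\cap\deg(\module_\facF)$, whereas $\rest_{\facH,\facF}$ produces the one with union $\bigcup[\veca,\facE\cup(-\facH)]\cap\deg(\module_\facF)$. By the ``$\supseteq$''-analysis of the previous paragraph, $\bigcup[\veca,\facE\cup(-\facH)]\cap\deg(\module_\facF)$ is a finite union of translates of $\facE\cup(-\facF)$ inside $\deg(\module_\facF)$; since $\facG\subseteq\facH\subseteq\facE$, each such translate is saturated with respect to $\N\facG$ and hence survives in the further localization $\module_\facG$, so the two unions agree and $\rest_{\facH,\facG}\circ\rest_{\facG,\facF}=\rest_{\facH,\facF}$; the same argument with $\facG'$ in place of $\facG$ gives $\rest_{\facH,\facG'}\circ\rest_{\facG',\facF}=\rest_{\facH,\facF}$ as well.

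The step I expect to be the main obstacle is the ``$\supseteq$'' inclusion of the displayed identity: one must verify that intersecting a finite union of translates of the larger face $\facH\cup(-\facG)$ with $\deg(\module_\facF)$ again yields a finite union of translates of the smaller face $\facH\cup(-\facF)$---that is, that the intersection is saturated under $\N\facH$---while simultaneously controlling the maximality of the resulting pairs so that their common face does not enlarge. This is precisely the delicate bookkeeping the element-level construction in \cref{lem:stdPairEquivalentDefinition} is designed to handle, and carrying it out inside $\module_\facF$ while tracking which monomials annihilate a given homogeneous element is the technical core of the argument.
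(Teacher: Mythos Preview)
Your proposal follows essentially the same route as the paper: reduce to the case $\facF=\{0\}$ via transitivity of localization (the paper phrases this as ``it is sufficient to show that the map is defined in the case $\facF=0$''), construct $\rest_{\facG,\facF}$ from \cref{lem:stdPairEquivalentDefinition}, check well-definedness and injectivity with \cref{lem:lifting_of_overlap_is_unique}, and derive the union identity by rerunning the element-level construction of \cref{lem:stdPairEquivalentDefinition}. The paper's own argument is much terser---it asserts the union identity ``by analogy to the construction'' and declares associativity ``clear from the definition''---whereas you spell out both inclusions and deduce functoriality from the union formula; but the underlying strategy is the same.

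One point of caution in your ``$\subseteq$'' argument: you claim that maximality of $(\vecb',\facH\cup(-\facF))$ in $\module_\facF$ forces $(\vecb',\facH\cup(-\facG))$ to be a \emph{degree} pair (not merely a proper pair) of $\module_\facG$. This implication is not immediate---a proper pair of $\module_\facG$ on a strictly larger face $\facH'\supsetneq\facH$ through $\vecb'$ does not obviously restrict to a proper pair of $\module_\facF$ on $\facH'\cup(-\facF)$ through $\vecb'$, because passing from $\module_\facG$ back to $\module_\facF$ generally shifts the base point. For the inclusion you actually need, it suffices that $\vecb'+\N(\facH\cup(-\facF))$ lands in $\bigcup[\veca,\facH\cup(-\facG)]$, and for this it is enough that some degree pair of $\module_\facG$ in the class $[\veca,\facH\cup(-\facG)]$ contains $\vecb'$; that follows from the overlap you already established once you note that any degree pair of $\module_\facG$ dominating the proper pair $(\vecb',\facH\cup(-\facG))$ must have face exactly $\facH\cup(-\facG)$ (else \cref{lem:stdPairEquivalentDefinition} would lift it to a degree pair of $\module_\facF$ on a face strictly larger than $\facH\cup(-\facF)$, and the argument of that lemma shows the lifted pair still covers $\vecb'$, contradicting maximality). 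With this adjustment your outline goes through.
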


\begin{proof}[Proof of~\cref{thm:map_between_sets_of_equiv_classes_of_degree_pairs}]
It is sufficient to show that the map is defined in the case
$\facF=0$. For a given overlap class $[\veca,\facH] \in
\degpo(\module_{\facG})$, let $\rest_{\facG,0}([\veca,\facH])
:=[\veca',\facH]$ where $\veca'$ is determined
by~\cref{lem:stdPairEquivalentDefinition}. As demonstrated
in~\cref{lem:lifting_of_overlap_is_unique}, $\rest_{\facG,0}$ is
well-defined and injective. Moreover, by analogy to the construction
of elements of $\module_{\facF}$ with degrees in
$\bigcup[\veca,\facH]$ in the proof
of~\cref{lem:stdPairEquivalentDefinition}, $\bigcup[\veca,\facH] =
\left(\bigcup[\veca',\facH]\right) \cap \deg(\module)$. Associativity
is clear from the definition.
\end{proof}

Finally, we provide a statement about void pairs, which is used in~\cref{sec:Cohen--Macaulay}.

\begin{corollary}
  \label{cor:for_reproving_trung_hoa}
Let
$\{\facF_{i}\}_{i=1}^{m}$ be the set of all facets of a
(not-necessarily pointed) affine semigroup $\affinesemigp$.
Let
$\module:=\field[\affinesemigp_{\text{sat}}]/\field[\affinesemigp]$. If
$\affinesemigp\neq\bigcap_{i=1}^{m}\affinesemigp-\N\facF_{i}$, then
there exists a void pair $(\veca,\facF)$ such that $\facF$ is not a
facet. 
\end{corollary}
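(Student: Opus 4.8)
The plan is to extract a specific hole from the hypothesis and trace it through the intersection. Since $\affinesemigp\subseteq\affinesemigp-\N\facF_i$ for every $i$, the assumption $\affinesemigp\neq\bigcap_{i=1}^{m}(\affinesemigp-\N\facF_i)$ produces an element $\veca\in\bigcap_{i=1}^{m}(\affinesemigp-\N\facF_i)$ with $\veca\notin\affinesemigp$. First I would check that $\veca$ is actually a hole, that is, $\veca\in\affinesemigp_{\text{sat}}\smallsetminus\affinesemigp=\deg(\module)$. The inclusion $\veca\in\Z\affinesemigp$ is immediate from $\affinesemigp-\N\facF_i=\affinesemigp+\Z\facF_i\subseteq\Z\affinesemigp$. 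For $\veca\in\realize\affinesemigp$, fix a facet $\facF_i$ with defining normal $\vecc_i$ and write $\veca=q_i+z_i$ with $q_i\in\affinesemigp$ and $z_i\in\Z\facF_i$; since $\<\vecc_i,-\>$ vanishes on $\Z\facF_i$ and has constant sign on $\affinesemigp$, the value $\<\vecc_i,\veca\>=\<\vecc_i,q_i\>$ has that same sign, so $\veca$ obeys the $\facF_i$-facet inequality. Running over all facets shows $\veca\in\realize\affinesemigp$ (equivalently, one may invoke \cref{pro:localization_cone_structure} to get $\realize(\affinesemigp-\N\facF_i)=\overline{\hplane_i^{(+)}}$ and then intersect over $i$). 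Hence $\veca\in\deg(\module)$, and since the degree pairs cover the degree set, $\veca\in\vecb+\N\facF$ for some void pair $(\vecb,\facF)$.

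The heart of the proof is the following claim, valid for \emph{every} void pair $(\vecb,\facF)$, with no facet hypothesis needed: $(\vecb+\N\facF)\cap(\affinesemigp-\N\facF)=\varnothing$. I would argue by contradiction. If $\vecc$ lies in the intersection, write $\vecc=\vecb+\vecg=q+z$ with $\vecg\in\N\facF$, $q\in\affinesemigp$, $z\in\Z\facF$, and express $z=z^{+}-z^{-}$ with $z^{\pm}\in\N\facF$ (every element of the group $\Z\facF$ is a difference of two elements of the monoid $\N\facF$). Then $\vecc+z^{-}=q+z^{+}$ belongs to $\affinesemigp$ because $z^{+}\in\N\facF\subseteq\affinesemigp$; but also $\vecc+z^{-}\in\vecb+\N\facF\subseteq\deg(\module)=\affinesemigp_{\text{sat}}\smallsetminus\affinesemigp$, which is disjoint from $\affinesemigp$, a contradiction. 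I expect this to be the only step that requires a genuine idea; it uses nothing beyond the group-of-differences description of $\N\facF$ and the fact that the holes of $\affinesemigp$ constitute $\deg(\module)$ and avoid $\affinesemigp$ itself.

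To conclude, suppose toward a contradiction that the face $\facF$ of the void pair $(\vecb,\facF)$ containing $\veca$ were a facet, say $\facF=\facF_{j}$ with $j\in\{1,\dots,m\}$. Then $\veca\in\affinesemigp-\N\facF_{j}$ because $\affinesemigp-\N\facF_{j}$ is one of the factors of $\bigcap_{i=1}^{m}(\affinesemigp-\N\facF_i)$, while also $\veca\in\vecb+\N\facF_{j}$ by construction; together these contradict the claim. Therefore $\facF$ is not a facet, and $(\vecb,\facF)$ is the required void pair. The only loose end is the degenerate case $m=0$, in which $\realize\affinesemigp$ is a linear subspace, $\affinesemigp=\Z\affinesemigp$ has no holes, and both sides of the asserted identity equal $\affinesemigp$, so there is nothing to prove; this I would dispatch in a single sentence.
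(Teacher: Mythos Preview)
Your argument is correct and follows the same contrapositive route as the paper, whose own proof is a one-line appeal to ``the previous results'' without specifying which. Your explicit claim that $(\vecb+\N\facF)\cap(\affinesemigp-\N\facF)=\varnothing$ for every void pair $(\vecb,\facF)$, proved directly via the group-of-differences trick, is precisely the step the paper leaves implicit, and your treatment is cleaner and self-contained.
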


\begin{proof}
By the previous results, if the only void pairs $\affinesemigp$ arise
from facets, then $\affinesemigp
=\bigcap_{i=1}^{m}\affinesemigp-\N\facF_{i}$.
\end{proof}

\begin{example}[Continuation of \cref{ex:stdpairs}]
\label{ex:std_localization}
\
\begin{enumerate}[leftmargin=*]
\item \label{enum:frob_localization}
Given $\module=\field\left[\N\left[\begin{smallmatrix} 1 & 1 & 1 & 1
      \\ 0 & 1 & 3 &
      4\end{smallmatrix}\right]\right]/\left\<\vart^{\left[\begin{smallmatrix}
        1 \\ 1\end{smallmatrix}\right]}\right\>$, all overlap classes
of $\deg(M)$ are singletons. Indeed,

\begin{align*}
\degp(\module)&=\left\{\text{(green) }\textcolor{green!80!black}{\left( \left[\begin{smallmatrix} 2\\ 3\end{smallmatrix}\right], \varnothing \right)},\text{(blue) }\textcolor{blue}{\left( \left[\begin{smallmatrix} 0\\ 0\end{smallmatrix}\right], F_{1} \right)}, \text{(red) }\textcolor{red}{\left( \left[\begin{smallmatrix} 0\\ 0\end{smallmatrix}\right], F_{2} \right),\left( \left[\begin{smallmatrix} 1\\ 3\end{smallmatrix}\right], F_{2} \right), \left( \left[\begin{smallmatrix} 2\\ 6\end{smallmatrix}\right], F_{2} \right)} \right\}\\
\degp(\module_{\facF_{1}})&=\left\{ \text{(blue) }\textcolor{blue}{\left( \left[\begin{smallmatrix} 0\\ 0\end{smallmatrix}\right], \facF_{1} \cup (-\facF_{1}) \right)}\right\}\\
\degp(\module_{\facF_{2}})&=\left\{ \text{(red) }\textcolor{red}{\left( \left[\begin{smallmatrix} 0\\ 0\end{smallmatrix}\right], \facF_{2}\cup(-\facF_{2}) \right),\left( \left[\begin{smallmatrix} 1\\ 3\end{smallmatrix}\right], \facF_{2}\cup(-\facF_{2}) \right), \left( \left[\begin{smallmatrix} 2\\ 6\end{smallmatrix}\right], \facF_{2}\cup(-\facF_{2}) \right)}\right\}.
\end{align*}

This shows two injections $\degpo(\midI_{\facF_{1}})\hookrightarrow\degpo(\midI) \hookleftarrow \degpo(\midI_{\facF_{2}})$.
\item \label{enum:3d_localization}
Given $\module=\field\left[\N \left[\begin{smallmatrix}0 & 1 & 1 & 0 \\ 0 & 0 & 1 & 1 \\ 1 & 1 & 1 & 1 \end{smallmatrix}\right]\right]/\left\<\vart^{\left[\begin{smallmatrix} 2& 2 & 2 & 3 \\ 0 & 1 & 3 & 3 \\ 2 & 2 & 3 & 3 \end{smallmatrix}\right]}\right\>$, the set of degree pairs of ideals in each localizations are as follows. 
\begin{align*}
\degp(\module)&=\left\{ \text{(green) }\textcolor{green!80!black}{\left( \left[\begin{smallmatrix} 2\\ 2 \\ 2\end{smallmatrix}\right], \varnothing \right)}, \text{(blue) } \textcolor{blue}{\left( \left[\begin{smallmatrix} 0\\ 0 \\ 0\end{smallmatrix}\right], \facF_{4} \right)}, \text{(yellow) }\textcolor{yellow!80!black}{\left( \left[\begin{smallmatrix} 1\\ 0 \\ 1\end{smallmatrix}\right], \facF_{4} \right)}, \text{(red) }\textcolor{red}{\left( \left[\begin{smallmatrix} 1\\ 1 \\ 1\end{smallmatrix}\right], \facF_{4} \right)}\right\}\\
\degp(\module_{\facG})&=\left\{ \text{(blue) } \textcolor{blue}{\left( \left[\begin{smallmatrix} 0\\ 0 \\ 0\end{smallmatrix}\right], \facF_{4} \cup (-\facG) \right)}, \text{(orange) }\textcolor{orange}{\left( \left[\begin{smallmatrix} 1\\ 0 \\ 0\end{smallmatrix}\right], F_{4} \cup(-\facG) \right)} \right\}
\end{align*}
for any $\facG \in \{\veca_{1},\veca_{4},\facF_{4} \}.$ Indeed, 
\[
\left( \left[\begin{smallmatrix} 1\\ 0 \\ 0\end{smallmatrix}\right], F_{4} \cup(-\facF_{4}) \right)=\left( \left[\begin{smallmatrix} 1\\ 0 \\ 1\end{smallmatrix}\right], F_{4} \cup(-\facF_{4}) \right) =\left( \left[\begin{smallmatrix} 1\\ 1 \\ 1\end{smallmatrix}\right], F_{4} \cup(-\facF_{4}) \right).
\]
This is an example of how the selection of $\veca'$ in~\cref{lem:stdPairEquivalentDefinition} is not unique. Nonetheless, we have an injection $\degpo(\midI_{\facG}) \to \degpo(\midI)$ by sending the orange overlap class to the overlap class consisting of yellow and red standard pairs.
\end{enumerate}
\end{example}

\section{Degree space with degree pair topology}
\label{sec:stdpair_top}

We now construct the \emph{degree space} of $\module$,
$\degs(\module):=\bigcup_{\facF \in
  \facelattice(\affinesemigp)}\deg(\module_{\facF})$, a topological
space formed by gluing all degree pairs of
localizations of a module. This structure enables us to simultaneously record
all degrees resulting from localizations. Moreover, the minimal open
sets of $\degs(\module)$, called \emph{grains}, partition all degrees
belonging to a fixed collection of localizations. A grain's \emph{chaff} is the poset of
all localizations that contain the given grain. Together
with~\cref{subsec:Ishida}, these tools yield a Hochster-type
formula for the Hilbert series of the local cohomology of $\module$
in~\cref{sec:Hochster}.  

\begin{definition}[Degree space and degree pair topology]
\label{def:degp_topology}
The \emph{degree space} $\degs(\module)$ of a finitely generated $\field[\affinesemigp]$-module $\module$ is the union of $\deg(\module_{\facF})$ for all faces $\facF$ of $\facelattice(\affinesemigp)$. The \emph{degree pair topology} is the smallest topology on $\degs(\module)$ such that for any face $\facF \in \facelattice(\affinesemigp)$ and for an overlap class $[\veca,\facG \cup (-\facF)] \in \degpo(\module_{\facF})$, the set $\bigcup[\veca,\facG \cup (-\facF)]$ is both open and closed.
\end{definition}

\begin{definition}[Grain and chaff]
\label{def:grain_and_chaff}
In the degree pair topology, we refer to a minimal nonempty open set
as a \emph{grain} of $\degs(\module)$. Let $\grainset(\module)$ be the
set of all such grains. The \emph{chaff} of a grain $\grain$, $D_{\grain}$, is defined as the collection of all localizations of $\module$ containing $\grain$. 
\end{definition}

These names are inspired by the agricultural metaphors of
Grothendieck; we bundle degree pairs on $\degs(\module)$ and thresh
(topologize) them in order to obtain grains. Chaff is a layer of grain
that provides information about the grain's containment in certain
localizations. 

\begin{remark}
  The sectors and sector partition introduced in~\cite{MR2199183}
  are almost the same the grains and chaff used in this
  article. The main differences are the topological context, and that
  grains actually refine the sector
  partition.
\end{remark}

\begin{lemma}
\label{lem:finiteness_of_degp_top}
The degree pair topology has finitely many open sets. 
\end{lemma}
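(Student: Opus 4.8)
The plan is to show that the degree pair topology is generated by a \emph{finite} subbasis; once that is done, finiteness of the topology is a formal consequence, since a topology with a finite subbasis has only finitely many open sets.

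First I would identify the generating family. By~\cref{def:degp_topology}, the degree pair topology is the smallest topology on $\degs(\module)$ in which every set $\bigcup[\veca,\facG\cup(-\facF)]$, with $\facF\in\facelattice(\affinesemigp)$ and $[\veca,\facG\cup(-\facF)]\in\degpo(\module_{\facF})$, is both open and closed; equivalently, it is the topology generated by the subbasis $\mathcal{S}$ consisting of all these sets together with their complements in $\degs(\module)$. The face lattice $\facelattice(\affinesemigp)$ is finite, being in bijection with the face lattice of the polyhedral cone $\realize{\affinesemigp}$. For each fixed face $\facF$, the localization $\module_{\facF}$ is a finitely generated module over $\field[\affinesemigp]_{P_{\facF}}\cong\field[\affinesemigp-\N\facF]$, because a localization of a finitely generated module over the Noetherian ring $\field[\affinesemigp]$ is finitely generated, and~\cref{lem:localization_proposition} identifies the localized ring. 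Hence~\cref{lem:finiteness_of_degree_pairs} applies to $\module_{\facF}$ and yields finitely many degree pairs, so $\degpo(\module_{\facF})$ is finite. Since there are only finitely many faces $\facF$, the subbasis $\mathcal{S}$ is finite.

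To finish, I would invoke the standard point-set fact: the collection $\mathcal{B}$ of all finite intersections of members of $\mathcal{S}$ is a basis for the degree pair topology, and it is finite because $\mathcal{S}$ is; every open set is a union of members of $\mathcal{B}$, and a subfamily of the finite set $\mathcal{B}$ has at most $2^{|\mathcal{B}|}$ distinct unions, so the topology is finite. The only step that is not purely formal — and the main (minor) obstacle — is verifying that $\module_{\facF}$ is again finitely generated, so that~\cref{lem:finiteness_of_degree_pairs} may be applied to it; this rests on the Noetherianity of $\field[\affinesemigp]$ together with the identification of the localized ring provided by~\cref{lem:localization_proposition}. Everything else is formal topology.
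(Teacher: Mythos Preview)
Your proposal is correct and follows essentially the same approach as the paper: both argue that the subbasis consisting of the sets $\bigcup[\veca,\facG\cup(-\facF)]$ and their complements is finite (finitely many faces, finitely many overlap classes per localization via \cref{lem:finiteness_of_degree_pairs}), hence the generated topology is finite. You are a bit more careful than the paper in explicitly justifying why \cref{lem:finiteness_of_degree_pairs} applies to each $\module_{\facF}$ as a module over $\field[\affinesemigp-\N\facF]$, a point the paper leaves implicit.
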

\begin{proof}
From~\cref{lem:finiteness_of_degree_pairs}, $\degpo(\module_{\facF})$ is finite. Also, $\facelattice(\affinesemigp)$ is finite. Finally, a subbase including all overlap classes and their complements over all localizations is used to generate the topology. Hence the topology has finitely many open sets.
\end{proof}

Our next result is that the grain set $\grainset(\affinesemigp/\idI)$
partitions the degrees of a module.

\begin{proposition}
\label{pro:grain_partition}
$\grainset(\module)$ partitions $\degs(\module)$ and is therefore a basis of the degree pair topology.
\end{proposition}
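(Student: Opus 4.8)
The plan is to recognize the grains as the ``atoms'' cut out by the finite clopen subbase that generates the degree pair topology, and then to read off both assertions from that identification. First I fix notation: by \cref{def:degp_topology} the degree pair topology is generated by the family $\mathcal S$ consisting of the sets $\bigcup[\veca,\facG\cup(-\facF)]$ together with their complements, where $\facF$ runs over $\facelattice(\affinesemigp)$ and $[\veca,\facG\cup(-\facF)]$ over $\degpo(\module_{\facF})$. By \cref{lem:finiteness_of_degree_pairs} and finiteness of $\facelattice(\affinesemigp)$ — equivalently, by \cref{lem:finiteness_of_degp_top} — the family $\mathcal S$ is finite, and each of its members is open. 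Writing $B_{1},\dots,B_{N}$ for the sets $\bigcup[\veca,\facG\cup(-\facF)]$ that occur, so that $\mathcal S=\{B_{1},\dots,B_{N},B_{1}^{c},\dots,B_{N}^{c}\}$, I set $A(p):=\bigcap\{B\in\mathcal S\mid p\in B\}$ for each $p\in\degs(\module)$. Since $\mathcal S$ is finite and consists of open sets, $A(p)$ is an open set containing $p$; concretely $A(p)=\bigcap_{i=1}^{N}C_{i}(p)$, where $C_{i}(p)$ denotes whichever of $B_{i}$, $B_{i}^{c}$ contains $p$ (we may assume $\degs(\module)\neq\varnothing$, so this is well defined).

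Next I prove that the sets $A(p)$ partition $\degs(\module)$, the engine being that $\mathcal S$ is closed under complementation. For each index $i$, exactly one of $B_{i}$, $B_{i}^{c}$ contains a given point, so $\{B\in\mathcal S\mid r\in B\}=\{C_{1}(r),\dots,C_{N}(r)\}$ for every $r$. Hence if $r\in A(p)$ then $r\in C_{i}(p)$ for all $i$, which forces $C_{i}(r)=C_{i}(p)$ and therefore $A(r)=A(p)$. Thus two atoms $A(p)$, $A(q)$ are either equal or disjoint; since $p\in A(p)$, the distinct atoms cover $\degs(\module)$, and they give the desired partition into open sets.

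It remains to identify this partition with $\grainset(\module)$ and to see it is a basis. Given any open set $U$ and any $p\in U$, there is a finite intersection $V$ of members of $\mathcal S$ with $p\in V\subseteq U$; every member of $\mathcal S$ that contains $p$ also contains $A(p)$, so $A(p)\subseteq V\subseteq U$, whence $U=\bigcup_{p\in U}A(p)$. This already shows $\{A(p)\}$ is a basis. If $\varnothing\neq W\subseteq A(p)$ is open, then $W$ is a union of atoms, each equal to $A(p)$ by the preceding paragraph, so $W=A(p)$; thus every atom is a minimal nonempty open set. Conversely, any minimal nonempty open set $\grain$ contains some point $p$, hence contains $A(p)$ by the basis property, hence equals $A(p)$ by minimality. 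Therefore $\grainset(\module)=\{A(p)\mid p\in\degs(\module)\}$, which partitions $\degs(\module)$ and is a basis.

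There is no real obstacle here beyond the two structural inputs: that each generator $\bigcup[\veca,\facG\cup(-\facF)]$ is clopen, which is exactly the content of \cref{def:degp_topology}, and that there are only finitely many of them, which is \cref{lem:finiteness_of_degp_top} (ultimately \cref{lem:finiteness_of_degree_pairs}). Granting these, the statement is the standard fact that a space with a finite clopen subbase is generated by its atoms; the only bookkeeping I would take care over is the two-way matching in the last paragraph between atoms and minimal nonempty open sets, which is where the minimality hypothesis in \cref{def:grain_and_chaff} is actually used.
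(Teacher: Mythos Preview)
Your proof is correct and follows essentially the same approach as the paper. Both arguments construct, for each point $\veca$, the atom obtained by intersecting all subbase sets $\bigcup[\vecb,\facG\cup(-\facF)]$ containing $\veca$ with the complements of those that do not; the paper's set $\setS$ is literally your $A(\veca)$, and the minimality verification is the same in substance, with your presentation simply packaging it as the standard fact about finite clopen subbases.
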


\begin{proof}
It suffices to show that $\grainset(\module)$ partitions
$\degs(\module)$. First of all, for any two elements $\setS$ and
$\setS'$ of $\grainset(\module)$, $\setS \cap
\setS'=\varnothing$. Otherwise,  $\setS$ and $\setS'$ cannot be
minimal nonempty opens, a contradiction. To see that $\grainset(\module)$ covers $\degs(\module)$, suppose $\veca \in \deg(\module_{\facF})$ for some face $\facF$. Let 
\[
\setS := \left( \bigcap_{\substack{\veca \in [\vecb,\facG] \in
      \degpo(\module_{\facF'}) \\ \facF' \in
      \facelattice(\affinesemigp) }} \left(\bigcup[\vecb,\facG]
  \right)\right) \cap \left( \bigcap_{\substack{\veca \not\in
      [\vecb,\facG] \in \degpo(\module_{\facF'}) \\ \facF' \in
      \facelattice(\affinesemigp) }}
  \left(\bigcup[\vecb,\facG]\right)^{c} \right).
\]
This is a nonempty open set since $\veca \in \setS$. We claim that
$\setS \in \grainset(\module)$. Suppose not; then there exists an
open set $\setS' \subsetneq \setS$. By the property of the subbase, we
may let $\setS' \subseteq \setS \cap \left(\bigcup[\vecb,\facG]\right)
\subsetneq \setS$ or $\setS' \subseteq \setS \cap
\left(\bigcup[\vecb,\facG]\right)^{c}\subsetneq \setS$ for some
overlap class $[\vecb,\facG]$. If $\setS' \subseteq \setS
\cap\left(\bigcup[\vecb,\facG]\right) \subsetneq \setS$ holds, then
$\left(\bigcup[\vecb,\facG]\right)^{c}$ contains $\veca$, thus
$\left(\bigcup[\vecb,\facG]\right)^{c} \cap \setS =\setS$ by the
construction of $\setS$. This implies that $\setS'=\varnothing$. If
$\setS' \subseteq \setS \cap
\left(\bigcup[\vecb,\facG]\right)^{c}\subsetneq \setS$ holds, then
$\setS \cap \left(\bigcup[\vecb,\facG]\right) = \setS$ implies
$\setS'=\varnothing$. In both cases, $\setS'$ is empty, a
contradiction. 
\end{proof}

\begin{figure*}[t!]
\centering
\begin{subfigure}[c]{0.45\linewidth}
\centering
\begin{tikzpicture}[scale=0.6]
  \draw[step=1cm,gray,very thin] (-3,-6) grid (3,6);
  \draw [<->] (0,6.5) node (yaxis) [above] {$y$}
        |- (3.5,0) node (xaxis) [right] {$x,\facF_{1}$};
\node[above] at (1.625,6.5) {$\facF_{2}$};        
\node[below left] at (0,0) {$O$};
\draw(0,-6.5) -- (0,0);
\draw(-3.5,0) -- (0,0);

\draw[black,fill=stdmcolor] (0,0) circle (3pt);

\draw[black,fill=stdmcolor] (2,0) circle (3pt);
\draw[black,fill=stdmcolor] (3,0) circle (3pt);
\draw[black,fill=stdmcolor] (3,0) circle (3pt);
\draw[black,fill=stdmcolor] (2,6) circle (3pt);

\foreach \y in {0,3,4}
\draw[black,fill=stdmcolor] (1,\y) circle (3pt);

\draw[black,fill=stdmcolor] (2,3) circle (3pt);
\draw[black,fill=stdmcolor] (1,2) circle (3pt);

\draw[black,fill=stdmcolor] (0,-1) circle (3pt);
\draw[black,fill=stdmcolor] (0,-2) circle (3pt);
\draw[black,fill=stdmcolor] (-1,-4) circle (3pt);
\draw[black,fill=stdmcolor] (-1,-5) circle (3pt);
\draw[black,fill=stdmcolor] (-1,-6) circle (3pt);
\draw[black,fill=stdmcolor] (-1,0) circle (3pt);
\draw[black,fill=stdmcolor] (-2,0) circle (3pt);
\draw[black,fill=stdmcolor] (-3,0) circle (3pt);

\draw[red,ultra thick,->] (1,4) -- (1.625,6.5);
\draw[red,ultra thick,->] (1,3) -- (1.875,6.5);
\draw[red,ultra thick,->] (2,6) -- (2.125,6.5);
\draw[orange,ultra thick,->] (1,2) -- (-1,-6);
\draw[orange,ultra thick,->] (0,-1) -- (-1.25,-6);
\draw[orange,ultra thick,->] (-1,-4) -- (-1.5,-6);
\draw[blue,ultra thick,->] (1,0) -- (3.5,0);
\draw[cyan,ultra thick,->] (-1,0) -- (-3.5,0);
\draw[green!80!black,ultra thick] (2,3) circle (10 pt);
\draw[yellow!80!black,ultra thick] (0,0) circle (10 pt);

\end{tikzpicture}
\caption{$Q = \N \left[\begin{smallmatrix}1 & 1 & 1 & 1 \\ 0 & 1 & 3 & 4  \end{smallmatrix}\right]$ and $I=\left\< \left[\begin{smallmatrix} 1 \\ 1 \end{smallmatrix}\right]\right\>$}
\label{fig:frob_stdm}
\end{subfigure}
\begin{subfigure}[c]{0.45\linewidth}
\centering
\begin{tikzpicture}[scale=0.6]
  \draw[step=1cm,gray,very thin] (-3,-3) grid (3,2);
  \draw [<->] (0,2.5) node (yaxis) [above] {$z$}
        |- (3.5,0) node (xaxis) [right] {$y$};
\draw (0,0) -- (-3.5,0);
\draw (0,0) -- (0,-3.5);
\foreach \x in {-3,...,3}
\foreach \y in {-3,...,2}
\draw[black,fill=stdmcolor] (\x,\y) circle (3pt);
\node[below left] at (0,0) {$O$};
\draw[red!80!white,ultra thick,->] (0,0) -- (0,2.5);
\draw[red!80!white,ultra thick,->] (0,0) -- (2.5,2.5);
\fill[red!80!white,opacity =0.5] (0,2) -- (0,0) -- (2,2);
\draw[blue,ultra thick,->,opacity =0.7] (0,-1) -- (0,-3.5);
\draw[blue,ultra thick,->,opacity =0.7] (0,-1) -- (3,2);
\fill[blue, opacity=0.5] (0,-3) -- (0,-1) -- (3,2)  -- (3,-3) -- cycle;
\draw[cyan,ultra thick,->,opacity =0.7] (-1,-1) -- (-3.5,-3.5);
\draw[cyan,ultra thick,->,opacity =0.7] (-1,-1) -- (-1,2.5);
\fill[cyan,opacity =0.5] (-3,-3) -- (-3,2) -- (-1,2) -- (-1,-1) -- cycle;
\draw[orange,ultra thick,->,opacity =0.7] (-1,-2) -- (-2.5,-3.5);
\draw[orange,ultra thick,->,opacity =0.7] (-1,-2) -- (-1,-3.5);
\fill[orange,opacity =0.7] (-2,-3) -- (-1,-2) -- (-1,-3);
\end{tikzpicture}
\begin{tikzpicture}[scale=0.6]
  \draw[step=1cm,gray,very thin] (-3,-3) grid (3,2);
  \draw [<->] (0,2.5) node (yaxis) [above] {$z$}
        |- (3.5,0) node (xaxis) [right] {$y$};
\draw (0,0) -- (-3.5,0);
\draw (0,0) -- (0,-3.5);
\foreach \x in {-3,...,3}
\foreach \y in {-3,...,2}
\draw[black,fill=stdmcolor] (\x,\y) circle (3pt);
\draw[violet,ultra thick] (0,0) circle (10 pt);
\draw[red!80!white,ultra thick,->] (1,1) -- (0,1) -- (0,2.5);
\draw[red!80!white,ultra thick,->] (1,1) -- (2.5,2.5);
\fill[red!80!white,opacity =0.5]  (0,2)--(0,1) -- (1,1) -- (2,2);
\draw[blue,ultra thick,->,opacity =0.7] (0,-1) -- (0,-3.5);
\draw[blue,ultra thick,->,opacity =0.7] (0,-1) -- (3,2);
\fill[blue, opacity=0.5] (0,-3) -- (0,-1) -- (3,2)  -- (3,-3) -- cycle;
\draw[cyan,ultra thick,->,opacity =0.7] (-1,-1) -- (-3.5,-3.5);
\draw[cyan,ultra thick,->,opacity =0.7] (-1,-1) -- (-1,2.5);
\fill[cyan,opacity =0.5] (-3,-3) -- (-3,2) -- (-1,2) -- (-1,-1) -- cycle;
\draw[orange,ultra thick,->,opacity =0.7] (-1,-2) -- (-2.5,-3.5);
\draw[orange,ultra thick,->,opacity =0.7] (-1,-2) -- (-1,-3.5);
\fill[orange,opacity =0.7] (-2,-3) -- (-1,-2) -- (-1,-3);
\node[below left] at (0,0) {$(1,0,0)$};
\end{tikzpicture}
\caption{$Q = \N \left[\begin{smallmatrix}0&1&1&0\\0&0&1&1\\ 1&1&1&1 \end{smallmatrix}\right]$ and $I=\left\< \left[\begin{smallmatrix} 2& 2 & 2 & 3 \\ 0 & 1 & 3 & 3 \\ 2 & 2 & 3 & 3 \end{smallmatrix}\right]\right\>$}
\label{fig:3d_coin_stdm}
\end{subfigure}
\caption{$\degs(\affinesemigp/\idI)$ with grains}
\label{fig:topology}
\end{figure*}

\begin{example}[Continuation of \cref{ex:std_localization}]
\label{ex:topology}
\
\begin{enumerate}[leftmargin=*]
\item \label{enum:frob_topology}
Given $\module=\field\left[\N\left[\begin{smallmatrix} 1 & 1 & 1 & 1 \\ 0 & 1 & 3 & 4\end{smallmatrix}\right]\right]/\left\<xy\right\>$, $\degs(\module)$ is the union of integral points in $y=4x, y=4x-1, y=4x-2, y=0$ and $\{(2,2)\}$. Moreover, 10 grains
\begin{align*}
&\text{(red) }\textcolor{red}{\left[\begin{smallmatrix} 1 \\ 4\end{smallmatrix}\right]+\N\facF_{2}, \left[\begin{smallmatrix} 1 \\ 3\end{smallmatrix}\right]+\N\facF_{2},\left[\begin{smallmatrix} 2 \\ 6\end{smallmatrix}\right]+\N\facF_{1}}, \text{(blue) }\textcolor{blue}{\left[\begin{smallmatrix} 1 \\ 0\end{smallmatrix}\right]+\N\facF_{1},} \text{(cyan) } \textcolor{cyan}{\left[\begin{smallmatrix} -1 \\ 0\end{smallmatrix}\right]+ \N\left(-\facF_{1}\right),} \\
&\text{(orange) }\textcolor{orange}{\left[\begin{smallmatrix} -1 \\ 4\end{smallmatrix}\right]+ \N\left(-\facF_{2} \right),\left[\begin{smallmatrix} 0 \\ -1\end{smallmatrix}\right]+\N\left(-\facF_{2} \right),\left[\begin{smallmatrix} 1 \\ 2\end{smallmatrix}\right]+\N \left(-\facF_{2} \right),}
\text{(yellow) }\textcolor{yellow!80!black}{\left[\begin{smallmatrix} 0 \\ 0\end{smallmatrix}\right]},\text{(green) } \textcolor{green!80!black}{\left[\begin{smallmatrix} 2 \\ 3\end{smallmatrix}\right]}
\end{align*}
are depicted in~\cref{fig:frob_stdm}. Two grains with the same color have the same chaff. Indeed, for the given grain $\grain$ with color from~\cref{fig:frob_stdm},
\begin{align*}
\text{(red) }\textcolor{red}{D_{\grain}}&:=\{ 0,\facF_{2} \},&  \text{(blue) }\textcolor{blue}{D_{\grain}}&:=\{ 0,\facF_{1} \}, &\text{(cyan) }\textcolor{cyan}{D_{\grain}}&:=\{ \facF_{1} \}, \\
\text{(orange) }\textcolor{orange}{D_{\grain}}&:=\{ \facF_{2} \}, &\text{(green) }\textcolor{green!80!black}{D_{\grain}}&:=\{ 0 \},& \text{(yellow) }\textcolor{yellow!80!black}{D_{\grain}}&:=\{ 0,\facF_{1},\facF_{2} \}.
\end{align*}
Note that $\left[\begin{smallmatrix} 1 \\ 2\end{smallmatrix}\right]$
is a hole filled by the localization with respect to $\facF_{2}$, and
therefore lies only in the degree pair of $\module_{\facF_{2}}$. 
\item \label{enum:3d_topology}
Given $\module=\field\left[\N \left[\begin{smallmatrix}0 & 1 & 1 & 0
      \\ 0 & 0 & 1 & 1 \\ 1 & 1 & 1 &
      1 \end{smallmatrix}\right]\right]/\left\<x^{2}z^{2},x^{2}yz^{2},x^{2}y^{3}z^{3},x^{3}y^{3}z^{3}\right\>$,
$\degs(\module)$ consists of the  $yz$-plane $(x=0)$, its translation $x=1$, and the point $(2,2,2)$. 10 grains
\begin{align*}
&\text{(red) }\textcolor{red}{\left[\begin{smallmatrix} 0 \\ 0 \\ 0\end{smallmatrix}\right]+\N\facF_{4}, \left(\left\{\left[\begin{smallmatrix} 1 \\ 0 \\ 1\end{smallmatrix}\right],\left[\begin{smallmatrix} 1 \\ 1 \\ 1 \end{smallmatrix}\right]\right\}+\N\facF_{4}\right)},\text{(blue) }\textcolor{blue}{\left[\begin{smallmatrix} 0 \\ 0 \\-1\end{smallmatrix}\right]+\N\left[\begin{smallmatrix} -\veca_{1} \\ \veca_{4}\end{smallmatrix}\right]^{t},\left[\begin{smallmatrix} 1 \\ 0 \\-1\end{smallmatrix}\right]+\N\left[\begin{smallmatrix} -\veca_{1}\\ \veca_{4}\end{smallmatrix}\right]^{t}}, \text{(violet) }\textcolor{violet}{\left[\begin{smallmatrix} 1 \\ 0 \\0\end{smallmatrix}\right]} \\
&\text{(cyan) }\textcolor{cyan}{\left[\begin{smallmatrix} 0 \\ -1 \\-1\end{smallmatrix}\right]+\N\left[\begin{smallmatrix} \veca_{1} \\ -\veca_{4}\end{smallmatrix}\right]^{t},\left[\begin{smallmatrix} 1 \\ -1 \\-1\end{smallmatrix}\right]+\N\left[\begin{smallmatrix} \veca_{1} \\ -\veca_{4}\end{smallmatrix}\right]^{t}}, \text{(orange) }\textcolor{orange}{\left[\begin{smallmatrix} 0 \\ -1 \\-2\end{smallmatrix}\right]-\N\facF_{4},\left[\begin{smallmatrix} 1 \\ -1 \\-2\end{smallmatrix}\right]-\N\facF_{4}},\text{(green) }\textcolor{green!80!black}{\left[\begin{smallmatrix} 2 \\ 2 \\2 \end{smallmatrix}\right] }
\end{align*}
are depicted in $x=1$ and $x=0$ planes of~\cref{fig:3d_coin_stdm}
except $\left[\begin{smallmatrix} 2 \\ 2
    \\2 \end{smallmatrix}\right]$. Note that
$\left[\begin{smallmatrix} 1 \\ 0 \\0\end{smallmatrix}\right]$ is not
a monomial of $\affinesemigp$ but that of $\affinesemigp-\N\veca_{1}$
or $\affinesemigp-\N\veca_{4}$, and therefore it lies in the
intersection of two degree pairs that came from $\module_{\veca_{1}}$
and $\module_{\veca_{4}}$ respectively. For the given grain $\grain$
with color as in the figures, the chaffs are as follows.
\begin{align*}
\text{(red) }\textcolor{red}{D_{\grain}}&:=\{ 0,\veca_{1},\veca_{4},\facF_{4} \}&\text{(blue) }\textcolor{blue}{D_{\grain}}&:=\{ \veca_{1},\facF_{4} \}, &\text{(cyan) }\textcolor{cyan}{D_{\grain}}&:=\{ \veca_{4},\facF_{4}  \}, \\
\text{(orange) }\textcolor{orange}{D_{\grain}}&:=\{ \facF_{4} \}, &\text{(green) }\textcolor{green!80!black}{D_{\grain}}&:=\{ 0 \},&\text{(violet) }\textcolor{violet}{D_{\grain}}&:=\{ \veca_{1},\veca_{4},\facF_{4} \}.
\end{align*}
\end{enumerate}
\end{example}

\section{Hilbert series via grains}
\label{sec:Hochster}

We derive a Hochster-type formula for the Hilbert series of the local
cohomology of a finitely generated $\Z^{\dimd}$-graded
$\field[\affinesemigp]$-module $\module$. Quotients of affine semigroup
rings by monomial ideals are an important example. 

\begin{definition}[Degree $\veca$-piece of the transverse section and its reduced chain complex]
  \label{def:chain_cpx_of_T}
Let $\module$ be a finitely generated graded $\field[\affinesemigp]$-module.
Given an element $\veca \in \Z^{\dimd}$,
let $\module_{\veca}$ be the degree-$\veca$ graded piece of
$\module$. Let $\crosssection$ be the transverse section of $\affinesemigp$ from~\cref{subsec:Ishida}. The
degree-$\veca$ \emph{graded piece of $\crosssection$ over $\module$}
is the subset of the face lattice of $\crosssection$ given by

\[
\crosssection_{\veca}:= \{\facF \in \facelattice(\crosssection) \mid
\veca \in \deg(\module_{\facF}) \}.
\] 

Likewise, denote $\virtual{\crosssection_{\veca}}:=
\{\virtual{\facF}\mid \facF \in \crosssection_{\veca}\}$ the
degree-$\veca$ \emph{graded piece of the affine semigroup $\affinesemigp$ over
  $\module$}. To align with the Ishida complex, the homological degree
of the reduced chain
complex of $\crosssection_{\veca} $ must be shifted as follows:
\[
\begin{tikzcd}
\rchaincpx{\crosssection_{\veca}}: 0\arrow[r] & C^{d} \arrow[r,"\partial"] &C^{d-1} \arrow[r,"\partial"] & \cdots \arrow[r,"\partial"] & C^{0} \arrow[r,"\partial"] & 0, \quad C^{k} :=\bigoplus_{\substack{\facF \in \crosssection_{\veca} \\ \dim\facF=k-1}}\field \facF
\end{tikzcd}
\]
with the differential 
\[
\partial(\facF) := \sum_{\substack{\facG  \in \crosssection_{\veca} \\
    \dim\facG=k-1}}\incidence(\facF,\facG) \facG \text{ for }
\dim\facF=k,
\]
where $\incidence$ is an incidence function inherited from
$\crosssection$ and $\field\facF$ is a 1-dimensional $\field$-vector
space having $\{\facF\}$ as a basis. 
\end{definition}

\begin{lemma}
\label{lem:graded_part_is_crosssection}
$\rchaincpx{\crosssection_{\veca}}$ is well-defined and $\left(\ishida^{\bullet} \tensor{\field[\affinesemigp]}\module\right)=\Hom_{\field}(\rchaincpx{\crosssection_{\veca}}, \field).$
\end{lemma}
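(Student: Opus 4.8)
The plan is to compute the degree-$\veca$ graded piece of $\ishida^{\bullet}\tensor{\field[\affinesemigp]}\module$ summand by summand and identify it, both as a graded $\field$-vector space and together with its differential, with the $\field$-dual of $\rchaincpx{\crosssection_{\veca}}$. Since $\field[\affinesemigp-\N\virtual{\facF}]$ is the localization of $\field[\affinesemigp]$ at the monomial prime $P_{\virtual{\facF}}$ by \cref{lem:localization_proposition}, tensoring with it over $\field[\affinesemigp]$ is nothing but localization, so $\field[\affinesemigp-\N\virtual{\facF}]\tensor{\field[\affinesemigp]}\module\cong\module_{\virtual{\facF}}$ as $\Z^{\dimd}$-graded modules. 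Hence $\bigl(\ishida^{\bullet}\tensor{\field[\affinesemigp]}\module\bigr)^{k}=\bigoplus_{\facF\in\facelattice(\crosssection)^{k-1}}\module_{\virtual{\facF}}$, and passing to the degree-$\veca$ piece gives $\bigoplus_{\facF\in\facelattice(\crosssection)^{k-1}}(\module_{\virtual{\facF}})_{\veca}$. By the definition of $\crosssection_{\veca}$, the summand $(\module_{\virtual{\facF}})_{\veca}$ is nonzero exactly when $\facF\in\crosssection_{\veca}$; and (with $\module$ finely graded, as it is here, e.g.\ $\module=\field[\affinesemigp]/\midI$) it is then one-dimensional over $\field$, because a localization of a finely graded module is again finely graded: its degree-$\veca$ part is a filtered colimit of the at most one-dimensional spaces $\module_{\veca+\vecf}$, $\vecf\in\N\virtual{\facF}$, and such a colimit is at most one-dimensional. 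Thus in homological degree $k$ we obtain $\bigl(\ishida^{\bullet}\tensor{\field[\affinesemigp]}\module\bigr)^{k}_{\veca}\cong\bigoplus_{\facF\in\crosssection_{\veca},\ \dim\facF=k-1}\field$, which is precisely $\Hom_{\field}(C^{k},\field)$ for the term $C^{k}=\bigoplus_{\facF\in\crosssection_{\veca},\ \dim\facF=k-1}\field\facF$ of $\rchaincpx{\crosssection_{\veca}}$.

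Next I would match the differentials. For $\facF\subset\facG$ the component $\partial_{\facF,\facG}$ of the Ishida differential is $\incidence(\facF,\facG)\cdot\nat$, where $\nat\colon\module_{\virtual{\facF}}\to\module_{\virtual{\facG}}$ is the canonical localization map, and it is $0$ when $\facF\not\subset\facG$. For $\module=\field[\affinesemigp]/\midI$ this is entirely transparent: each nonzero graded piece $(\module_{\virtual{\facF}})_{\veca}$ is spanned by the class of $\vart^{\veca}$, and $\nat$ carries this class to the class of $\vart^{\veca}$, so whenever $\facF,\facG\in\crosssection_{\veca}$ the map $\nat$ restricts to the identity $\field\to\field$ in these canonical bases. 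Consequently, after identifying each nonzero $(\module_{\virtual{\facF}})_{\veca}$ with $\field$ via $\vart^{\veca}$, the Ishida differential $\bigl(\ishida^{\bullet}\tensor{\field[\affinesemigp]}\module\bigr)^{k}_{\veca}\to\bigl(\ishida^{\bullet}\tensor{\field[\affinesemigp]}\module\bigr)^{k+1}_{\veca}$ is literally the incidence matrix of $\crosssection_{\veca}$, which is the transpose of the boundary map of $\rchaincpx{\crosssection_{\veca}}$, i.e.\ the coboundary of $\Hom_{\field}(\rchaincpx{\crosssection_{\veca}},\field)$ (here one uses the standard convention that $\incidence(\facF,\facG)$ denotes the incidence number between the faces $\facF$ and $\facG$). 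This yields the asserted equality $\bigl(\ishida^{\bullet}\tensor{\field[\affinesemigp]}\module\bigr)_{\veca}=\Hom_{\field}(\rchaincpx{\crosssection_{\veca}},\field)$; for a general finely graded $\module$ the same bookkeeping gives a canonical isomorphism.

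Finally, the ``well-defined'' clause, namely $\partial\circ\partial=0$ for $\rchaincpx{\crosssection_{\veca}}$, then comes for free, since the $\field$-dual of a genuine cochain complex (the degree-$\veca$ part of $\ishida^{\bullet}\tensor{\field[\affinesemigp]}\module$, which is a complex because $\ishida^{\bullet}$ is) is a genuine chain complex. I expect the main obstacle to be the combinatorics of $\crosssection_{\veca}$ inside $\facelattice(\crosssection)$: in general $\crosssection_{\veca}$ is neither a subcomplex nor an up-set, so one should not expect $\partial\circ\partial=0$ for an arbitrary collection of faces, and what makes it work is that $\crosssection_{\veca}$ is \emph{interval-closed} --- if $\facF\subseteq\facG\subseteq\facH$ with $\facF,\facH\in\crosssection_{\veca}$ then $\facG\in\crosssection_{\veca}$. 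For $\module=\field[\affinesemigp]/\midI$ this is immediate, since $\facF\in\crosssection_{\veca}$ is the conjunction of $\veca\in\affinesemigp-\N\virtual{\facF}$ (a condition that only grows as the face grows, as $\affinesemigp-\N\virtual{\facF}\subseteq\affinesemigp-\N\virtual{\facG}$) and $(\veca+\N\virtual{\facF})\cap\idI=\varnothing$ (a condition that only contracts as the face grows, as $\N\virtual{\facG}\subseteq\N\virtual{\facH}$), so $\facG$ inherits the first half from $\facF$ and the second from $\facH$; running the usual diamond-property computation for $\crosssection$ while restricting to faces of $\crosssection_{\veca}$ then gives $\partial\circ\partial=0$ directly. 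For an arbitrary finely graded $\module$ the same up-then-down dichotomy persists and can be formalized using the restriction maps of \cref{thm:map_between_sets_of_equiv_classes_of_degree_pairs}; making that precise is the one genuinely technical point of the proof.
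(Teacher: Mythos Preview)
Your approach is essentially the same as the paper's: compute $(\ishida^{k}\tensor{\field[\affinesemigp]}\module)_{\veca}$ summand by summand, identify each nonzero summand with $\field$, and recognize the result as $\Hom_{\field}(C^{k},\field)$; the paper then simply observes that the $\field$-linear differentials dualize correctly, which is your second paragraph. Your final paragraph on interval-closedness is extra work the paper does not do and does not need---as you yourself note first, $\partial\circ\partial=0$ for $\rchaincpx{\crosssection_{\veca}}$ is automatic once you know its $\field$-dual is the degree-$\veca$ piece of an honest complex, so the direct combinatorial argument is a pleasant sanity check but not ``the one genuinely technical point.'' One useful clarification you add that the paper leaves implicit is the finely-graded hypothesis on $\module$, without which the identification $(\module_{\virtual{\facF}})_{\veca}\cong\field$ would fail.
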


\begin{proof}
For any $k \in \N$,
\[
\left(\ishida^{k} \tensor{\field[\affinesemigp]}
  \module\right)_{\veca} = \left(\bigoplus_{\facF \in
    \facelattice(\crosssection)^{k-1}}
  \module_{\virtual{\facF}}\right)_{\veca} =
\bigoplus_{\substack{\facF \in \facelattice(\crosssection)^{k-1} \\
    \module_{\virtual{\facF}} \neq 0}}
\left(\module_{\virtual{\facF}}\right)_{\veca} \cong
\bigoplus_{\substack{\facF \in \facelattice(\crosssection)^{k-1} \\
    \veca \in \deg(\module_{\virtual{\facF}})  }} \field \facF
=\bigoplus_{\substack{\facF \in \crosssection_{\veca} \\
    \dim\facF=k-1}}\field \facF,
\]
which is equal to $C^{k}$. Apply the functor $\Hom_{\field}(-,\field)$
to obtain $\rchaincpx{\crosssection_{\veca}}$. Since differentials in
$\ishida^{\bullet} \tensor{\field[\affinesemigp]} \module$ are
$\field$-linear, their images under $\Hom_{\field}(-,\field)$ agree
with differentials in $\rchaincpx{\crosssection_{\veca}}$. 
\end{proof}

Furhtermore, if two elements of $\Z^{\dimd}$ are in the same grain, their graded pieces of the Ishida complex coincide.

\begin{lemma}
\label{lem:classify_crosssection}
For any $\veca \in \grain \in \grainset\left(\module\right)$,
$\virtual{\crosssection_{\veca}}= D_{\grain}$. Thus,
$\rchaincpx{\crosssection_{\veca}}= \rchaincpx{\crosssection_{\vecb}}$
if $\veca,\vecb \in \grain$. If there is no grain containing $\veca$,
then $\rchaincpx{\crosssection_{\veca}}=0$. 
\end{lemma}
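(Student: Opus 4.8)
The plan is to convert both sides into subsets of $\facelattice(\affinesemigp)$ and compare them. Transporting $\crosssection_{\veca}$ along the bijection $\virtual{-}\colon\facelattice(\crosssection)\to\facelattice(\affinesemigp)$ of~\cref{subsec:Ishida}, we have $\virtual{\crosssection_{\veca}}=\{\facG\in\facelattice(\affinesemigp)\mid\veca\in\deg(\module_{\facG})\}$, whereas by~\cref{def:grain_and_chaff} the chaff is $D_{\grain}=\{\facG\in\facelattice(\affinesemigp)\mid\grain\subseteq\deg(\module_{\facG})\}$. So the content of the first assertion is the set equality $\virtual{\crosssection_{\veca}}=D_{\grain}$ for $\veca\in\grain$, and one inclusion, $D_{\grain}\subseteq\virtual{\crosssection_{\veca}}$, is immediate because $\veca\in\grain$.

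For the reverse inclusion I would fix a face $\facG$ with $\veca\in\deg(\module_{\facG})$ and show $\grain\subseteq\deg(\module_{\facG})$. Since $\module_{\facG}$ is a finitely generated graded $\field[\affinesemigp-\N\facG]$-module, the pair-cover property established in the proof of~\cref{lem:finiteness_of_degree_pairs} (following~\cite{STDPAIR}*{Theorem 3.16}) gives $\deg(\module_{\facG})=\bigcup_{[\vecb,\facH]\in\degpo(\module_{\facG})}\bigcup[\vecb,\facH]$; each $\bigcup[\vecb,\facH]$ is a subbasic open set of the degree pair topology (\cref{def:degp_topology}), so $\deg(\module_{\facG})$ is open. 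By~\cref{pro:grain_partition} the grains form a basis, so $\grain$ is the unique grain containing $\veca$ and is contained in every open set that contains $\veca$; hence $\grain\subseteq\deg(\module_{\facG})$, i.e.\ $\facG\in D_{\grain}$. This yields $\virtual{\crosssection_{\veca}}=D_{\grain}$, an expression that depends only on $\grain$; since $\virtual{-}$ is a bijection, $\crosssection_{\veca}$ depends only on $\grain$ as a subset of $\facelattice(\crosssection)$, and because the incidence function in~\cref{def:chain_cpx_of_T} is the one inherited from the fixed complex $\crosssection$, we get $\rchaincpx{\crosssection_{\veca}}=\rchaincpx{\crosssection_{\vecb}}$ for $\veca,\vecb\in\grain$. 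If no grain contains $\veca$, then $\veca\notin\degs(\module)=\bigcup_{\facF\in\facelattice(\affinesemigp)}\deg(\module_{\facF})$ by~\cref{pro:grain_partition}, so $\crosssection_{\veca}=\varnothing$ and every term $C^{k}$ of $\rchaincpx{\crosssection_{\veca}}$ vanishes, whence $\rchaincpx{\crosssection_{\veca}}=0$.

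The main obstacle I anticipate is justifying that $\deg(\module_{\facG})$ is open in the degree pair topology, which hinges on the degree pairs of a localization actually covering its degree set; once that is in hand, the rest is a formal consequence of the bijection $\virtual{-}$ and the basis property in~\cref{pro:grain_partition}. A secondary point requiring attention is the bookkeeping around the $(-1)$-dimensional face $\varnothing$ of $\crosssection$ and the minimal face of $\affinesemigp$ it corresponds to, but phrasing the argument entirely in terms of $\facelattice(\affinesemigp)$ via $\virtual{-}$ avoids any special-casing.
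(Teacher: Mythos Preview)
Your argument is correct and follows the same approach the paper takes; you have simply unpacked what the paper dismisses in one line as ``clear from the definition of chaff.'' The substantive step you isolate---that $\deg(\module_{\facG})$ is open in the degree pair topology, so the minimal open set containing $\veca$ must sit inside it---is exactly the content hidden in that sentence, and your justification via the pair-cover of $\deg(\module_{\facG})$ and the minimality of grains is the natural way to make it precise.
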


\begin{proof}
By~\cref{pro:grain_partition}, if there is no grain containing
$\veca$, then $\veca \not\in \deg(\module_{\facF})$ for any $\facF \in
\facelattice(\affinesemigp)$, so
$\rchaincpx{\crosssection_{\veca}}=0$. $\virtual{\crosssection_{\veca}}=
D_{\grain}$ is clear from the definition of chaff. 
\end{proof}

As a consequence of the previous result, we may use the notation $\rchaincpx{\crosssection_{\grain}}:= \rchaincpx{\crosssection_{\veca}}$ for the grain $\grain$ containing $\veca$. $\rchaincpx{\crosssection_{\grain}}$ coincides with the chain complex of $\crosssection_{\grain}=D_{\grain}$. Since $\grainset(\module)$ is finite, according to~\cref{lem:finiteness_of_degp_top}, the Hilbert series of the local cohomology of $\module$ is a finite sum over cohomologies of chaffs as follows.

\begin{theorem}[Hochster-type formula for the Ishida complex]
\label{thm:Hochster}
The multi-graded Hilbert series for the local cohomology of a graded
module $\module$ with support at the maximal ideal $\maxid$ is
\[
\hilb{\localcoho{\maxid}{i}{\module}}{\hilbt} = \sum_{\grain
  \in\grainset(\module) }
\dim_{\field}H^{i}(\Hom_{\field}(\rchaincpx{\crosssection_{\grain}},\field)
) \sum_{\veca  \in \grain }\hilbt^{\veca}.
\]
\end{theorem}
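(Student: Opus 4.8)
The plan is to unwind the definition of the Hilbert series and splice together the structural results already established. Since $\localcoho{\maxid}{i}{\module}$ is $\Z^{\dimd}$-graded, its multi-graded Hilbert series is by definition $\sum_{\veca \in \Z^{\dimd}}\dim_{\field}\bigl(\localcoho{\maxid}{i}{\module}\bigr)_{\veca}\,\hilbt^{\veca}$, so it suffices to compute each graded piece $\bigl(\localcoho{\maxid}{i}{\module}\bigr)_{\veca}$ separately. By~\cref{thm:Ishida}, $\localcoho{\maxid}{i}{\module} \cong H^{i}(\ishida^{\bullet}\tensor{\field[\affinesemigp]}\module)$. Because the functor ``degree-$\veca$ graded piece'' is exact on the category of $\Z^{\dimd}$-graded $\field$-vector spaces and the differentials of the Ishida complex are homogeneous of degree $0$, taking cohomology commutes with passing to degree $\veca$; hence $\bigl(\localcoho{\maxid}{i}{\module}\bigr)_{\veca} \cong H^{i}\bigl((\ishida^{\bullet}\tensor{\field[\affinesemigp]}\module)_{\veca}\bigr)$.

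Next I would invoke~\cref{lem:graded_part_is_crosssection} to rewrite $(\ishida^{\bullet}\tensor{\field[\affinesemigp]}\module)_{\veca}$ as $\Hom_{\field}(\rchaincpx{\crosssection_{\veca}},\field)$, giving $\bigl(\localcoho{\maxid}{i}{\module}\bigr)_{\veca} \cong H^{i}(\Hom_{\field}(\rchaincpx{\crosssection_{\veca}},\field))$. Now~\cref{lem:classify_crosssection} governs how this varies with $\veca$: if $\veca$ lies in a grain $\grain \in \grainset(\module)$ then $\rchaincpx{\crosssection_{\veca}} = \rchaincpx{\crosssection_{\grain}}$, so $\dim_{\field}\bigl(\localcoho{\maxid}{i}{\module}\bigr)_{\veca}$ equals $\dim_{\field}H^{i}(\Hom_{\field}(\rchaincpx{\crosssection_{\grain}},\field))$, which depends only on $\grain$; and if $\veca$ lies in no grain then $\rchaincpx{\crosssection_{\veca}} = 0$, so that graded piece vanishes.

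Finally I would reassemble the series. By~\cref{pro:grain_partition} the grains form a partition of $\degs(\module)$, while by the previous paragraph every $\veca \notin \degs(\module)$ contributes zero; splitting the sum over $\Z^{\dimd}$ accordingly and pulling the grain-constant factor $\dim_{\field}H^{i}(\Hom_{\field}(\rchaincpx{\crosssection_{\grain}},\field))$ out of the inner sum yields
\[
\hilb{\localcoho{\maxid}{i}{\module}}{\hilbt} = \sum_{\grain \in \grainset(\module)}\dim_{\field}H^{i}(\Hom_{\field}(\rchaincpx{\crosssection_{\grain}},\field))\sum_{\veca \in \grain}\hilbt^{\veca},
\]
which is the asserted formula; finiteness of the outer sum is~\cref{lem:finiteness_of_degp_top}. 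I anticipate no serious obstacle here: the only steps deserving a sentence of justification are the commutation of cohomology with the exact degree-$\veca$ functor and the observation that degrees outside $\degs(\module)$ drop out of the sum, both of which follow immediately from the cited results.
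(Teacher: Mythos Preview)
Your proposal is correct and follows essentially the same approach as the paper: unwind the Hilbert series, pass to graded pieces via \cref{thm:Ishida}, identify each piece with $H^{i}(\Hom_{\field}(\rchaincpx{\crosssection_{\veca}},\field))$ by \cref{lem:graded_part_is_crosssection}, use \cref{lem:classify_crosssection} to see that this depends only on the grain, and regroup. You are slightly more explicit about the exactness of the degree-$\veca$ functor and the use of \cref{pro:grain_partition} and \cref{lem:finiteness_of_degp_top}, but the argument is the same.
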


\begin{proof}
\begin{align*}
\hilb{\localcoho{\maxid}{i}{\module}}{\hilbt} &= \sum_{\veca \in \Z^{\dimd}}\dim_{\field}\left(\localcoho{\maxid}{i}{\module}\right)_{\veca}\hilbt^{\veca} = \sum_{\veca  \in \Z^{\dimd}} \dim_{\field}\left(H^{i}(\ishida^{\bullet} \tensor{\field[\affinesemigp]} \module)\right)_{\veca} \hilbt^{\veca} \\
&=\sum_{\veca  \in \degs(\module)} \dim_{\field}H^{i}(\Hom_{\field}(\rchaincpx{\crosssection_{\veca}},\field) ) \hilbt^{\veca} \quad\quad\;\,\; \text{(\cref{lem:graded_part_is_crosssection})} \\
& =\sum_{\grain \in\grainset(\module) }\sum_{\veca  \in \grain } \dim_{\field}H^{i}(\Hom_{\field}(\rchaincpx{\crosssection_{\grain}},\field) ) \hilbt^{\veca} \quad \text{(\cref{lem:classify_crosssection})}\\
&=\sum_{\grain \in\grainset(\module) } \dim_{\field}H^{i}(\Hom_{\field}(\rchaincpx{\crosssection_{\grain}},\field) ) \left(\sum_{\veca  \in \grain }\hilbt^{\veca}\right)
\end{align*}
\end{proof}

The Hilbert series in Theorem~\ref{thm:Hochster} is a finite sum
involving generating functions of lattice points in polyhedra. To conclude these
generating functions are rational, the underlying cone must be
pointed~\cites{Barvinok99,Barvinok03}.

\begin{corollary}
\label{cor:Barvinok_algorithm}
If $\affinesemigp$ is pointed, the Hilbert series in
Theorem~\ref{thm:Hochster} can be expressed as a (formal) sum of
rational functions.  \qed
\end{corollary}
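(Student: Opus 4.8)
The plan is to read the statement straight off Theorem~\ref{thm:Hochster}. That theorem expresses $\hilb{\localcoho{\maxid}{i}{\module}}{\hilbt}$ as a sum over $\grain\in\grainset(\module)$ of the nonnegative integer $\dim_\field H^i(\Hom_\field(\rchaincpx{\crosssection_\grain},\field))$ times the lattice-point generating function $\sum_{\veca\in\grain}\hilbt^\veca$; the sum is finite because $\grainset(\module)$ is finite by Lemma~\ref{lem:finiteness_of_degp_top}. So it is enough to show that for each grain $\grain$ the series $\sum_{\veca\in\grain}\hilbt^\veca$ is (a formal expression equal to) a rational function in $\hilbt$.

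First I would record the polyhedral nature of a grain. By Definition~\ref{def:degp_topology} the degree pair topology is generated by the subbasic sets $\bigcup[\veca,\facG\cup(-\facF)]$ together with their complements, and by the definition of $\bigcup[\,\cdot\,]$ and Lemma~\ref{lem:localizationFaceBijection} each such subbasic set is a finite union of translates $\vecb+\N(\facG\cup(-\facF))$ of rational polyhedral cones. Hence, via Proposition~\ref{pro:grain_partition}, every grain is obtained from finitely many translated rational cones by finitely many intersections, unions, and complements, so its indicator function $\chi_\grain$ lies in the $\Z$-algebra of indicator functions of lattice points in rational polyhedra in $\Z^{\dimd}$. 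The conclusion then follows from the generating-function valuation of Lawrence and of Khovanskii--Pukhlikov, used algorithmically by Barvinok \cites{Barvinok99,Barvinok03}: the assignment $[P]\mapsto\sum_{\veca\in P\cap\Z^{\dimd}}\hilbt^{\veca}$ extends to a valuation from that $\Z$-algebra to $\Q(\hilbt)$, so $\chi_\grain$ is sent to a rational function; summing over $\grain$ gives the stated finite formal sum of rational functions.

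Pointedness of $\affinesemigp$ is what makes this meaningful rather than merely formal. It is already the hypothesis under which the transverse section $\crosssection$, the Ishida complex, and hence Theorem~\ref{thm:Hochster} exist; in addition it forces $\realize\affinesemigp$ to contain no line, and using that $\deg(\module_{\facF})$ is monotone in $\facF$ (further localization only enlarges the degree set) and is a finite union of translated faces of $\affinesemigp$, one checks that no grain can contain an affine line. Consequently the polyhedra produced above have pointed recession cones and the rational function attached to $\chi_\grain$ by the valuation is the honest convergent series $\sum_{\veca\in\grain}\hilbt^{\veca}$.

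The main obstacle is the passage in the second paragraph from the Boolean description of a grain to a usable polyhedral one: complements of cones are not polyhedra, so one must invoke the standard fact that the Boolean algebra generated by rational polyhedra is spanned by indicator functions of rational polyhedra (equivalently, by finite disjoint unions of relatively open rational polyhedra), and then verify that the pieces arising from a grain have pointed recession cones when $\affinesemigp$ is pointed. That verification, sketched above, is the only place the hypothesis is genuinely used; the remainder is bookkeeping on top of Theorem~\ref{thm:Hochster}.
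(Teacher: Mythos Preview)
Your strategy matches the paper's: the corollary carries only a bare \qed, with the preceding sentence asserting that grains consist of lattice points in polyhedra and citing Barvinok for rationality. You go further by isolating the point that actually needs justification, namely that each grain has pointed recession cone. However, both facts you invoke in your third paragraph to verify this fail for general finitely generated graded $\module$. First, $\deg(\module_{\facF})$ is \emph{not} monotone in $\facF$: with $\affinesemigp=\N^2$, $\module=\field[\varx,\vary]/(\varx\vary)$, and $\facF$ the ray through $(1,0)$, one has $(0,5)\in\deg(\module)$ while $(0,5)\notin\deg(\module_{\facF})$, since inverting $\varx$ kills $\vary$; thus chaffs need not be upward closed in $\facelattice(\affinesemigp)$. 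Second, $\deg(\module_{\facF})$ is not a finite union of translates of faces of $\affinesemigp$: its degree pairs produce translates of $\N\facH+\Z\facF$ with $\facH\supseteq\facF$, and for $\facF\neq\{0\}$ these already contain every line parallel to $\facF$. So your sketch, as written, does not rule out lines in grains.

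The conclusion is still plausible and the argument can likely be repaired using ingredients you already have: containment of $\grain$ in an overlap class of $\module_{\facF}$ forces the direction $\vecv$ of any line into the span of $\facF$ for each $\facF\in D_{\grain}$, and when $\{0\}\in D_{\grain}$ this already gives $\vecv=0$; otherwise one must use disjointness of $\grain$ from $\deg(\module)$ together with $\deg(\module_{\facF})\subseteq\deg(\module)-\N\facF$ to cut off the positive direction along $\vecv$. The paper does not attempt this step at all, so your treatment is already an improvement; it only remains to replace the two incorrect claims with a genuine argument.
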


In the non-pointed case, Hochster-type formulas are
not necessarily given by rational functions. For example, the Hilbert series of the Laurent
polynomial ring $\field[\varx,\varx^{-1}]$ cannot be expressed as a
rational function, since $\frac{1}{1-\varx} +
\frac{\varx^{-1}}{1-\varx^{-1}} = 0$, which is different from the
formal sum $\sum_{i \in \Z}\varx^{i}$.  

Vanishing of local cohomology is a standard way to detect whether a ring is Cohen--Macaulay.

\begin{theorem}[Combinatorial Cohen--Macaulay criterion]
\label{thm:Cohen--Macaulayness}
Given a pointed affine semigroup ring $\field[\affinesemigp]$ and a
monomial ideal $\midI$, $\field[\affinesemigp]/\midI$ is
Cohen--Macaulay ring if and only if every chaff of grains in $
\grainset(\field[\affinesemigp]/\midI)$ is either acyclic or
(-1)-dimensional in homological index $\diml:= \dim
\field[\affinesemigp]/\midI$. \qed
\end{theorem}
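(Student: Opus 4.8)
The plan is to obtain the statement as a bookkeeping consequence of the Hochster-type formula in \cref{thm:Hochster} together with the standard cohomological characterization of the Cohen--Macaulay property. Write $\module := \field[\affinesemigp]/\midI$ and $\diml := \dim\module$. Since $\affinesemigp$ is pointed, $\maxid$ is the graded maximal ideal of $\field[\affinesemigp]$ and the usual graded facts apply: $\localcoho{\maxid}{i}{\module} = 0$ for $i > \diml$ (Grothendieck vanishing), $\localcoho{\maxid}{\diml}{\module} \neq 0$ (Grothendieck non-vanishing, as $\diml = \dim\module$), and $\operatorname{depth}_{\maxid}\module = \min\{\, i : \localcoho{\maxid}{i}{\module} \neq 0 \,\}$. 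Hence $\module$ is Cohen--Macaulay, i.e.\ $\operatorname{depth}_{\maxid}\module = \diml$, if and only if $\localcoho{\maxid}{i}{\module} = 0$ for every $i \neq \diml$.

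Next I would feed this into \cref{thm:Hochster}, which gives for each $i$
\[
\hilb{\localcoho{\maxid}{i}{\module}}{\hilbt}
= \sum_{\grain \in \grainset(\module)}
\dim_{\field} H^{i}\!\bigl(\Hom_{\field}(\rchaincpx{\crosssection_{\grain}},\field)\bigr)
\sum_{\veca \in \grain}\hilbt^{\veca}.
\]
By \cref{pro:grain_partition} the grains partition $\degs(\module)$, so for distinct grains the formal sums $\sum_{\veca \in \grain}\hilbt^{\veca}$ have disjoint and nonempty monomial supports. Therefore the right-hand side is the zero series precisely when every coefficient $\dim_{\field} H^{i}(\Hom_{\field}(\rchaincpx{\crosssection_{\grain}},\field))$ vanishes; equivalently, $\localcoho{\maxid}{i}{\module} = 0$ if and only if $H^{i}(\Hom_{\field}(\rchaincpx{\crosssection_{\grain}},\field)) = 0$ for every grain $\grain$. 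By \cref{lem:classify_crosssection} the complex $\rchaincpx{\crosssection_{\grain}}$ is the (shifted) reduced chain complex of the chaff $\crosssection_{\grain} = D_{\grain}$, so these are genuinely invariants of the chaffs.

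Combining the two steps, $\module$ is Cohen--Macaulay if and only if, for every grain $\grain \in \grainset(\module)$, the cohomology of $\Hom_{\field}(\rchaincpx{\crosssection_{\grain}},\field)$ vanishes in every homological index different from $\diml$ --- that is, the chaff $\crosssection_{\grain}$ is either acyclic or has cohomology concentrated in homological index $\diml$, which is the asserted condition. I do not expect a serious obstacle here, since the argument merely assembles results already in hand; the two points requiring care are (i) translating the displayed combinatorial phrase ``acyclic or $(-1)$-dimensional in homological index $\diml$'' into ``$H^{i}(\Hom_{\field}(\rchaincpx{\crosssection_{\grain}},\field)) = 0$ for all $i \neq \diml$'' and accounting for the degenerate chaffs (for instance those equal to the single $(-1)$-dimensional face, which by the argument above force $\diml = 0$), and (ii) recording that $\localcoho{\maxid}{\diml}{\module} \neq 0$ automatically, so that ``depth equals dimension'' really does follow from vanishing off index $\diml$ and need not be checked separately.
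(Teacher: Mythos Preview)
Your proposal is correct and is exactly the argument the paper has in mind: the theorem is stated with a terminal \qed\ and no proof, because it is intended as an immediate consequence of \cref{thm:Hochster} together with the standard local-cohomology characterization of Cohen--Macaulayness, which is precisely what you have written out. Your care in (i) and (ii) is appropriate; the paper's phrasing ``acyclic or $(-1)$-dimensional in homological index~$\diml$'' is indeed just shorthand for the condition you unpack, and nothing further is needed.
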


\begin{example}[Continuation of \cref{ex:ishida}]
\label{ex:hochster}
\
\begin{enumerate}[leftmargin=*]
\item \label{enum:frob_hochster}
As illustrated in \cref{ex:topology}(\ref{enum:frob_topology}),
$\affinesemigp/I = \N\left[\begin{smallmatrix} 1 & 1 & 1 & 1 \\ 0 & 1
    & 3 & 4\end{smallmatrix}\right]/\left\<\left[\begin{smallmatrix} 1
      \\ 1\end{smallmatrix}\right]\right\>$ has six distinct colored
chaffs. We take the unions of grains of the same color and color-code these unions. The following table summarizes their rational generating functions.
\begin{align*}
\text{(red) }\textcolor{red}{f_{\text{r}}}&:= \frac{xy^4 + xy^3 + x^2y^6}{1-xy^4} &\text{(blue) }\textcolor{blue}{f_{\text{b}}}&:=\frac{x}{1-x}, &\text{(cyan) } \textcolor{cyan}{f_{\text{c}}}&:= \frac{1}{x-1},\\
\text{(orange) } \textcolor{orange}{f_{\text{o}}}&:=\frac{1 + xy^{3}+x^2y^{6}}{xy^4-1},  &\text{(green) }\textcolor{green!80!black}{f_{\text{g}}}&:=x^2 y^3,  & \text{(yellow) }\textcolor{yellow!80!black}{f_{\text{y}}}&:=1
\end{align*}
where $x:=\hilbt^{\left[\begin{smallmatrix} 1 \\ 0\end{smallmatrix}\right]}$ and $y:=\hilbt^{\left[\begin{smallmatrix} 0 \\ 1\end{smallmatrix}\right]}$. Thus, $\rchaincpx{\crosssection_{G}}$ is a member of one of three chain complexes below.
\begin{align*}
\rchaincpx{\crosssection_{\textcolor{yellow!80!black}{y}}}: & 0 \to \field \to \field^2 \to 0 & \rchaincpx{\crosssection_{\textcolor{red}{r}}},\rchaincpx{\crosssection_{\textcolor{blue}{b}}}: & 0 \to \field \to \field \to 0 \\ 
\rchaincpx{\crosssection_{\textcolor{green!80!black}{g}}}:  & 0 \to \field \to 0 \to 0  & \rchaincpx{\crosssection_{\textcolor{cyan}{c}}},\rchaincpx{\crosssection_{\textcolor{orange}{o}}}: & 0 \to 0 \to \field\to 0
\end{align*}
As a result,
\begin{align*}
\hilb{\localcoho{\maxid}{0}{\setS}}{\{x,y\}} &=\textcolor{green!80!black}{f_{g}} & \hilb{\localcoho{\maxid}{1}{\setS}}{\{x,y\}}&=\textcolor{yellow!80!black}{f_{y}}+\textcolor{cyan}{f_{c}}+\textcolor{orange}{f_{o}}.
\end{align*}
\item\label{enum:3d_hochster} 
As illustrated in \cref{ex:topology}(\ref{enum:3d_topology}),
$\affinesemigp/I =\N \left[\begin{smallmatrix}0 & 1 & 1 & 0 \\ 0 & 0 &
    1 & 1 \\ 1 & 1 & 1 &
    1 \end{smallmatrix}\right]/\left\<\left[\begin{smallmatrix} 2& 2 &
      2 & 3 \\ 0 & 1 & 3 & 3 \\ 2 & 2 & 3 &
      3 \end{smallmatrix}\right]\right\>$ has six distinct colored
chaffs. As before, we take unions of grains of the same color and
index these unions according to their color. Their rational generating
functions are as follows:
\begin{align*}
\text{(red) }\textcolor{red}{f_{r}}&:= \frac{1+x}{(1-z)(1-yz)} -x,  &\text{(blue) }\textcolor{blue}{f_{b}}&:=\frac{1+x}{(z-1)(1-yz)}, & \text{(green) }\textcolor{green!80!black}{f_{g}}&:=(xyz)^{2},\\
\text{(orange) } \textcolor{orange}{f_{o}}&:=\frac{1+x}{(z-1)(yz-1)},  &\text{(cyan) }\textcolor{cyan}{f_{c}}&:= \frac{1+x}{(1-z)(yz-1)},  & \text{(violet) }\textcolor{violet}{f_{v}}&:=x
\end{align*}
where $x:=\hilbt^{\left[\begin{smallmatrix} 1 \\ 0 \\ 0\end{smallmatrix}\right]}$, $y:=\hilbt^{\left[\begin{smallmatrix} 0 \\ 1 \\ 0\end{smallmatrix}\right]}$, and $z:=\hilbt^{\left[\begin{smallmatrix} 0 \\ 0 \\ 1\end{smallmatrix}\right]}$. We may classify $\rchaincpx{\crosssection_{G}}$ as follows:
\begin{align*}
\rchaincpx{\crosssection_{\textcolor{red}{r}}}:  & 0 \to \field \to \field^2 \to \field \to 0& \rchaincpx{\crosssection_{\textcolor{blue}{b}}},\rchaincpx{\crosssection_{\textcolor{cyan}{c}}}: &0 \to 0 \to \field \to \field \to 0 \\ 
\rchaincpx{\crosssection_{\textcolor{green!80!black}{g}}}: & 0 \to \field \to 0 \to 0 \to 0 & \rchaincpx{\crosssection_{\textcolor{violet}{v}}}: &0 \to 0 \to \field^{2}\to \field \to 0  \\
\rchaincpx{\crosssection_{\textcolor{orange}{o}}}: & 0 \to 0 \to 0\to \field \to 0
\end{align*}
Hence, 
\begin{align*}
\hilb{\localcoho{\maxid}{0}{\setS}}{\{x,y\}} &=\textcolor{green!80!black}{f_{g}} & \hilb{\localcoho{\maxid}{1}{\setS}}{\{x,y\}}&=\textcolor{violet}{f_{v}} & \hilb{\localcoho{\maxid}{2}{\setS}}{\{x,y\}}&=\textcolor{orange}{f_{o}}.
\end{align*}
\end{enumerate}
\end{example}

\section{Revisiting Cohen--Macaulay affine semigroup rings}
\label{sec:Cohen--Macaulay}

In this section we concentrate on the special case when $\midI=0$. We
give a new criterion using grains to detect whether
$\field[\affinesemigp]$ is Cohen--Macaulay and give an alternative
proof of the Cohen--Macaulay condition in~\cite{MR857437}. To begin,
we recall a celebrated theorem of Hochster~\cite{MR304376} when
$\affinesemigp$ is normal, and prove it yet again with our methods.

\begin{theorem}[\cite{MR304376}]
\label{thm:case_of_normal_affine_semigroup}
If $\affinesemigp$ is normal, $\field[\affinesemigp]$ is Cohen--Macaulay. 
\end{theorem}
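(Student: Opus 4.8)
The plan is to feed $\module = \field[\affinesemigp]$ (the case $\midI = 0$) into the Hochster-type formula of~\cref{thm:Hochster} and show that the chain complex $\rchaincpx{\crosssection_\grain}$ attached to \emph{every} grain is either acyclic or has its cohomology concentrated in homological index $\dimd := \dim\field[\affinesemigp]$; vanishing of $\localcoho{\maxid}{i}{\field[\affinesemigp]}$ for $i\neq\dimd$, together with Grothendieck non-vanishing at $i=\dimd$, then forces $\operatorname{depth}\field[\affinesemigp] = \dimd = \dim\field[\affinesemigp]$, i.e.\ Cohen--Macaulayness. First I would reduce to the pointed case: $\field[\affinesemigp]$ is always a Laurent polynomial extension $\field[\affinesemigp_0][\vart_1^{\pm},\dots,\vart_r^{\pm}]$ of $\field[\affinesemigp_0]$ for a pointed normal semigroup $\affinesemigp_0$, and such extensions preserve both normality and the Cohen--Macaulay property. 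So assume $\affinesemigp$ is pointed; after an automorphism of $\Z^{\dimd}$ we may also assume $\Z\affinesemigp = \Z^{\dimd}$ and that $\realize\affinesemigp$ is full-dimensional, with inner facet normals $\vecc_1,\dots,\vecc_n$ (so $\realize\affinesemigp = \bigcap_i \hplane_i^{(+)}$) and transverse section $\crosssection$.

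The first substantive step is that normality propagates to every localization: combining $\affinesemigp = \Z\affinesemigp\cap\realize\affinesemigp$ with~\cref{pro:localization_cone_structure} gives $\affinesemigp-\N\facF = \Z^{\dimd}\cap\regR_{\setS} = \Z^{\dimd}\cap\realize(\affinesemigp-\N\facF)$ for every face $\facF$ (given an integer point of $\realize(\affinesemigp-\N\facF)$, push it far into $\relint(\N\facF)$ to land in $\affinesemigp$, exactly as in the proof of~\cref{pro:localization_cone_structure}). Consequently $\degs(\field[\affinesemigp]) = \Z^{\dimd}$, and for $\veca\in\Z^{\dimd}$ and $\facF\in\facelattice(\crosssection)$, the definition $\affinesemigp-\N\virtual\facF = \bigcap_{i:\,\virtual\facF\subseteq\hplane_i}\hplane_i^{(+)}$ shows that $\facF\in\crosssection_\veca$ if and only if $\langle\vecc_i,\veca\rangle\geq 0$ for every $i$ with $\facF$ contained in the facet $\crosssection_i := \crosssection\cap\hplane_i$ of $\crosssection$. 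Thus, writing $N(\veca) := \{i : \langle\vecc_i,\veca\rangle < 0\}$ and $Z_\veca := \bigcup_{i\in N(\veca)}\crosssection_i \subseteq \partial\crosssection$, the set $\crosssection_\veca$ consists precisely of the faces of $\crosssection$ \emph{not} contained in $Z_\veca$.

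Hence $\rchaincpx{\crosssection_\veca}$ is (a regrading of) the relative chain complex $C_\bullet(\crosssection, Z_\veca)$: its degree-$k$ term is spanned by the $(k-1)$-faces of $\crosssection$ avoiding $Z_\veca$, with the polytopal boundary as differential, so $H^k(\rchaincpx{\crosssection_\veca}) \cong H_{k-1}(\crosssection, Z_\veca)$. Since $\crosssection$ is a $(\dimd-1)$-ball, the long exact sequence of the pair gives $H^k(\rchaincpx{\crosssection_\veca}) \cong \tilde H_{k-2}(Z_\veca)$ whenever $Z_\veca\neq\varnothing$, while $\rchaincpx{\crosssection_\veca}$ is the (acyclic) augmented chain complex of the ball $\crosssection$ when $Z_\veca=\varnothing$ (that is, when $\veca\in\affinesemigp$). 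It remains to control the topology of $Z_\veca$, which is the geometric heart of the argument: $Z_\veca$ is the union of the facets of $\crosssection$ lying \emph{beneath} the point, or sweep direction, cut out by $\veca$ in the affine chart of $\crosssection$. By the classical beneath--beyond / Bruggesser--Mani analysis of polytope boundaries (see~\cite{Ziegler95}), the ``beneath'' facets and the ``beyond'' facets each constitute a shellable $(\dimd-2)$-ball, and their union is all of $\partial\crosssection$; therefore $Z_\veca$ is either contractible or all of $\partial\crosssection\cong\sphere^{\dimd-2}$, the latter occurring exactly when every facet inequality is strictly violated, i.e.\ when $-\veca\in\relint\realize\affinesemigp = \relint\affinesemigp$. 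In the first case $\rchaincpx{\crosssection_\veca}$ is acyclic; in the second, $H^k(\rchaincpx{\crosssection_\veca})\cong\tilde H_{k-2}(\sphere^{\dimd-2})$ is $\field$ for $k=\dimd$ and $0$ otherwise.

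Applying $\Hom_\field(-,\field)$ does not change which cohomological degrees are nonzero, so~\cref{thm:Hochster} yields $\localcoho{\maxid}{i}{\field[\affinesemigp]} = 0$ for all $i\neq\dimd$, while $\localcoho{\maxid}{\dimd}{\field[\affinesemigp]}\neq 0$ (its Hilbert series is $\sum_{-\veca\in\relint\affinesemigp}\hilbt^{\veca}$, and $\relint\affinesemigp$ contains lattice points). Since the graded maximal ideal is $\maxid$ and $\dim\field[\affinesemigp]=\dimd$, this gives $\operatorname{depth}\field[\affinesemigp]=\dimd=\dim\field[\affinesemigp]$, so $\field[\affinesemigp]$ is Cohen--Macaulay; equivalently, every chaff is acyclic or is the single top face of $\crosssection$, so one may instead quote~\cref{thm:Cohen--Macaulayness}. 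The step I expect to be the main obstacle is the identification of $Z_\veca$ with a ``beneath'' facet set and the invocation of beneath--beyond: the naive claim ``any union of facets of a polytope is contractible or everything'' is false (a union of two opposite facets of a prism is disconnected), so the argument genuinely exploits that $Z_\veca$ is the set of facets separated from $\crosssection$ by a single hyperplane, and transferring this from the cone $\realize\affinesemigp$ to the bounded transverse section $\crosssection$ requires tracking the three positions of $\veca$ relative to the defining affine chart (a genuine point of the chart, a point at infinity, or a point ``behind'' it). Everything else is formal bookkeeping with the machinery of~\cref{sec:stdpairs,sec:stdpair_top,sec:Hochster}.
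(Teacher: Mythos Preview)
Your proposal is correct and lands on the same topological core as the paper---the graded piece $\crosssection_{\veca}$ is the complement of the ``visible'' subcomplex of the transverse section, and visibility/beneath--beyond gives contractibility---but the packaging differs in two ways worth noting. First, the paper spends most of its proof establishing that for normal $\affinesemigp$ the grains coincide exactly with the regions $\regr_{\setS}\cap\Z\affinesemigp$ of the facet hyperplane arrangement (the equality~\eqref{eqn:visible}), via an induction on $|\setS|$; this is what ties Hochster's theorem into the paper's grain/chaff framework and is later reused for the non-normal case. You bypass this entirely by working degree-by-degree, which is more economical here but does not produce the structural statement the paper wants. Second, the paper handles the visibility argument by choosing, for each $\veca$, a transverse hyperplane \emph{through} $\veca$, so that $\veca$ lies in the affine hull of the resulting polytope and Bruns--Herzog's visibility result applies directly; you instead keep a fixed transverse section and must then argue that the violated-facet set $Z_{\veca}$ is still a beneath/beyond complex, which (as you correctly flag) requires the case split on the sign of the ``height'' coordinate of $\veca$. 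Both are valid; the paper's trick is slicker for this step, while your relative-chain-complex identification $H^{k}(\rchaincpx{\crosssection_{\veca}})\cong H_{k-1}(\crosssection,Z_{\veca})\cong\tilde H_{k-2}(Z_{\veca})$ is more explicit about where the cohomology lives.
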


To prove this, we recall concepts of~\cite{BH_CMrings}. Given a
polyhedron $\polyhedron$ in $\R$-vector space $V$, let $\veca, \vecb
\in V$ two distinct points. If $[\veca,\vecb]$ does not contain a
point $\vecb' \in \polyhedron$ with $\vecb' \neq \vecb$, we say that
$\vecb$ is \emph{visible} from $\veca$. A subset $\setS$ is
\emph{visible} if every $\vecb \in \setS$ is visible. Given a polytope
$\polytope'$, a contractible polyhedral subcomplex is formed by the
set of all visible points from $\veca \in V\smallsetminus \polytope'$~\cite{BH_CMrings}*{Proposition 6.3.1}. We refer to this polyhedral subcomplex as the \emph{$\veca$-visible subpolytope} of $\polyhedron'$.

\begin{proof}
Let $\arngmt$ be the hyperplane arrangement generated by hyperplanes in the $H$-representation of $\realize{\affinesemigp}$. We claim that 
\begin{equation}
  \label{eqn:visible}
\grainset(\field[\affinesemigp]) = \{\regr_{\setS} \cap
\Z\affinesemigp\mid \regr_{\setS} \in \regionr{\arngmt}\}.
\end{equation}
If the equality~\eqref{eqn:visible} holds, for the given nonempty
$\setS$ and a point $\veca \in \regr_{\setS} \cap \Z\affinesemigp$,
construct a hyperplane $\hplane$ containing $\veca$ and transversally
intersecting $\realize\affinesemigp$. The transverse section
$\crosssection$ is then realized as a poytope $\hplane \cap
\realize\affinesemigp$. Thus, the chaff of a grain can be defined as a
subset of $\facelattice(\crosssection)$ that is not visible from
$\veca$. Due to the contractibility of both $\crosssection$ and
$\veca$-visible subpolytope of $\crosssection$, the chain complex over
the chaff is contractible via the long exact sequence of
cohomology. Thus, except the top dimension, the chaff of any grain has
vanishing homology. This argument essentially paraphrases the proof
of~\cite{BH_CMrings}*{Theorem 6.3.4}. 

To prove~\eqref{eqn:visible}
recall that the poset of regions $\regionr{\arngmt}$ partitions
$\degs(\field[\affinesemigp]) = \Z\affinesemigp$. We may use induction
over the cardinality of $\setS$ to determine that each grain is of the
form $\regr_{\setS} \cap \Z\affinesemigp$. Assume that the hyperplane
arrangement $\arngmt$ consists of the elements
$\hplane_{1},\hplane_{2},\cdots, \hplane_{m}$, and that
$\facF_{\smalli}$  is the facet supported by $\hplane_{\smalli}$ for $i \leq m$. Start with $|\setS|=m$; $\regr_{\setS} \cap \Z\affinesemigp$ is a grain since $(0,\affinesemigp)$ is the unique degree pair of $\affinesemigp$ and $0+\affinesemigp \subset 0+\left(\affinesemigp-\N\facF\right)$ for any face $\facF$.

To use induction, suppose we showed that $\regr_{\setS} \cap \Z\affinesemigp$ with $|\setS| \leq m-\smalli$ is a grain. Then, for any $\setS$ with cardinality $m-(\smalli+1)$, we claim $\regr_{\setS} \cap \Z\affinesemigp = (\bigcap_{\smalli \in \setS}(\affinesemigp-\N\facF_{\smalli})) \minus \left( \bigcup_{\setT \supsetneq \setS}\regr_{\setT} \right) \cap \Z\affinesemigp$. Indeed, $(\bigcap_{\smalli \in \setS}(\affinesemigp-\N\facF_{\smalli})) = \regR_{\setS} \cap \Z\affinesemigp$ by the definition of the cumulative regions and the normality of $\affinesemigp$. Then, the righthand side $(\bigcap_{\smalli \in \setS}(\affinesemigp-\N\facF_{\smalli})) \minus \left( \bigcup_{\setT \supsetneq \setS}\regr_{\setT} \right) \cap \Z\affinesemigp$ is nothing more than the construction of $\regr_{\setS}$ from $\regR_{\setS}$. Furthermore, for each $\setT \supsetneq \setS$,$\regr_{\setT} \cap \Z\affinesemigp$ is a grain by inductive hypothesis. This shows the proposed one-to-one correspondence between grains and regions in $\regionr{\arngmt}$.

Now pick a grain $\regr_{\setS} \cap \Z\affinesemigp$. Then $\regr_{\setS} \subseteq \regR_{\setT} \cap \Z\affinesemigp$ if and only if $\setT \subseteq \setS$. Hence, its chaff can be identified as a subset of faces in $\facelattice(\affinesemigp)$ whose corresponding localizations contain $\regr_{\setS} \cap \Z\affinesemigp$.
\end{proof}

When $\affinesemigp$ is not normal, we need the chaffs and grains of
the module $\field[\affinesemigp_{\text{sat}}]/\field[\affinesemigp]$
to determine the chaffs of grains of $\field[\affinesemigp]$. To
distinguish two chaffs and grains from different modules, we refer to
the grains and chaffs of the module
$\field[\affinesemigp_{\text{sat}}]/\field[\affinesemigp]$ as
\emph{void grains and void chaffs}, respectively, in accordance with
the conventions in~\cref{ex:std_void}(\ref{enum:voidpairs}). 

\begin{theorem}
\label{thm:case_of_non_normal_affine_semigroup}
$\field[\affinesemigp]$ is Cohen--Macaulay if and only if every grain
consisting of void grains has vanishing homology except in top
dimension. 
\end{theorem}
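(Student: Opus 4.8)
The plan is to combine the normalization short exact sequence with the Hochster-type formula of~\cref{thm:Hochster}. As elsewhere in this section we take $\affinesemigp$ to be pointed, so that the Ishida complex is available; when $\affinesemigp$ is already normal there are no void grains and both sides of the equivalence hold trivially, so assume $\affinesemigp$ is not normal. Write $\diml=\dim\field[\affinesemigp]=\dim\field[\affinesemigp_{\text{sat}}]$ and $\module=\field[\affinesemigp_{\text{sat}}]/\field[\affinesemigp]$. First I would start from
\[
0\longrightarrow \field[\affinesemigp]\longrightarrow \field[\affinesemigp_{\text{sat}}]\longrightarrow \module\longrightarrow 0
\]
and record two facts. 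Since $\affinesemigp_{\text{sat}}$ is normal, $\field[\affinesemigp_{\text{sat}}]$ is Cohen--Macaulay by~\cref{thm:case_of_normal_affine_semigroup}, so $\localcoho{\maxid}{i}{\field[\affinesemigp_{\text{sat}}]}=0$ for every $i\neq\diml$. Moreover $\field[\affinesemigp_{\text{sat}}]$ is the normalization of $\field[\affinesemigp]$ (see~\cref{ex:std_void}), hence module-finite over $\field[\affinesemigp]$ with the same fraction field, so $\module$ is a torsion $\field[\affinesemigp]$-module; in particular $\dim\module\leq\diml-1$ and $\localcoho{\maxid}{i}{\module}=0$ for all $i\geq\diml$. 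Finally $\localcoho{\maxid}{0}{\field[\affinesemigp]}=0$ because $\field[\affinesemigp]$ is a domain.

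Next I would feed this sequence into the long exact sequence of local cohomology supported at $\maxid$. The first fact turns the connecting homomorphisms into isomorphisms $\localcoho{\maxid}{i}{\field[\affinesemigp]}\cong\localcoho{\maxid}{i-1}{\module}$ for $1\leq i\leq\diml-1$. Since $\field[\affinesemigp]$ is Cohen--Macaulay precisely when $\localcoho{\maxid}{i}{\field[\affinesemigp]}=0$ for all $i<\diml$, and $\localcoho{\maxid}{0}{\field[\affinesemigp]}=0$ already, this is equivalent to $\localcoho{\maxid}{j}{\module}=0$ for every $j$ with $0\leq j\leq\diml-2$. (The index $j=\diml-1$ is harmless: by the same long exact sequence and the first fact, $\localcoho{\maxid}{\diml-1}{\module}$ only injects into the always-nonzero top module $\localcoho{\maxid}{\diml}{\field[\affinesemigp]}$, so it places no obstruction.)

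Then I would apply~\cref{thm:Hochster} to $\module$. By~\cref{pro:grain_partition} the void grains partition $\degs(\module)$, so the lattice-point generating functions $\sum_{\veca\in\grain}\hilbt^{\veca}$ of distinct void grains have disjoint monomial support; hence $\localcoho{\maxid}{j}{\module}=0$ if and only if $H^{j}\!\bigl(\Hom_{\field}(\rchaincpx{\crosssection_{\grain}},\field)\bigr)=0$ for every void grain $\grain$. Combining with the previous paragraph, $\field[\affinesemigp]$ is Cohen--Macaulay if and only if every void chaff $\crosssection_{\grain}=D_{\grain}$ has vanishing cohomology in homological degrees $0,\dots,\diml-2$. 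To see that this is exactly ``vanishing except in top dimension'', note that $\affinesemigp-\N\affinesemigp=\Z\affinesemigp=\affinesemigp_{\text{sat}}-\N\affinesemigp$ gives $\module_{\affinesemigp}=0$; since $\virtual{\crosssection}=\affinesemigp$, the top cell $\crosssection$ therefore never lies in a void chaff. Hence each $\crosssection_{\grain}$ is a subcomplex of $\facelattice(\crosssection)$ of dimension at most $\diml-2$ and $\rchaincpx{\crosssection_{\grain}}$ is supported in homological degrees $0,\dots,\diml-1$; its top degree is $\diml-1$, so ``vanishing for $j\leq\diml-2$'' is precisely ``vanishing off the top degree''.

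I expect the main obstacle to be the combination of the index bookkeeping in the long exact sequence---pinning down the exact range $0\leq j\leq\diml-2$---with the observation that void chaffs omit the top cell of $\crosssection$; the latter is what upgrades ``vanishing off the top two degrees'' to the sharp statement ``vanishing off the top degree''. Everything else is a formal consequence of~\cref{thm:case_of_normal_affine_semigroup} and~\cref{thm:Hochster}.
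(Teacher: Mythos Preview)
Your argument is correct and takes a genuinely different route from the paper. The paper's proof applies~\cref{thm:Cohen--Macaulayness} directly to $\field[\affinesemigp]$ and then observes that for any grain of $\field[\affinesemigp]$ not contained in the hole locus, the chaff coincides with the one arising in the saturated case, so the visibility argument inside the proof of~\cref{thm:case_of_normal_affine_semigroup} already forces the required vanishing; only grains made up entirely of holes remain to be checked. You instead pass to the normalization exact sequence, use Hochster's theorem to kill $\localcoho{\maxid}{i}{\field[\affinesemigp_{\text{sat}}]}$ for $i<\diml$, and then invoke~\cref{thm:Hochster} on $\module$. Your approach gives a clean homological reduction and avoids re-running the geometric visibility argument; the paper's approach stays entirely within the grain language and is more self-contained.

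Two small points of caution. First, the paper's phrase ``grain consisting of void grains'' refers to a grain of $\field[\affinesemigp]$ whose points are holes, and the relevant chaff is computed in $\grainset(\field[\affinesemigp])$; such chaffs \emph{do} contain the top cell $\affinesemigp$ (see $D_{\holes_{1,2}}=\{\facF_1,\facF_2,\affinesemigp\}$ in~\cref{ex:nonnormal_cmness}). You instead work with void chaffs of $\module$, which never contain $\affinesemigp$. The two vanishing conditions are equivalent---precisely by the degree-$\veca$ piece of your own long exact sequence---but they are not literally the same statement, so your conclusion matches the theorem only up to that identification. Second, your assertion that every void chaff has top homological degree exactly $\diml-1$ can fail (a hole of $\affinesemigp$ filled in every nontrivial localization gives void chaff $\{\varnothing\}$, sitting in degree $0$); what your argument actually establishes, and what suffices, is the uniform condition $H^{j}=0$ for all $j\leq\diml-2$.
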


\begin{proof}
If a grain $\grain$ has a degree which exists in $\bigcup\deg(\affinesemigp_{\text{sat}})$, then we may apply the same argument of~\cref{thm:case_of_normal_affine_semigroup} to show that the homology of $D_{\grain}$ vanishes except for the top dimension. Hence, the only grains we need to investigate the homology of their chaff are a grain consisting of holes.
\end{proof}

Now we are prepared to give an alternative proof of the main result
of~\cite{MR857437}. Let $\facF_{1}, \cdots, \facF_{m}$ be facets of a
pointed affine semigroup $\affinesemigp$. Let
$\widetilde{\affinesemigp}:=
\bigcap_{i=1}^{m}(\affinesemigp-\N\facF_{i})$. For any nonempty subset
$\setS$ of $\{ 1,2,\cdots, m\}$, let $\grain_{\setS}:=
\bigcap_{i\not\in \setS}(\affinesemigp-\N\facF_{i})  \minus
\bigcup_{j\in \setS}(\affinesemigp-\N\facF_{j})$. Let $\pi_{\setS}$ be
the simplicial complex of nonempty subsets $I$ of $\setS$ such that
$\bigcap_{i \in I}F_{i}$ is a nonempty face of $\affinesemigp$. By
abuse of notation, we identify the face lattice
$\facelattice(\pi_{\setS})$ of $\pi_{\setS}$ as a subset $\{\bigcap_{i
  \in I}F_{i} \in \facelattice(\affinesemigp)\smallsetminus \{
\varnothing\} \mid I \in \pi_{\setS}  \}\cup\{\varnothing\}$ of
$\facelattice(\affinesemigp)$. We say $\pi_{\setS}$ is \emph{acyclic}
if its reduced homology group is zero for all indices. 

\begin{theorem}[Main theorem in~\cite{MR857437}]
\label{thm:TH_MainThm}
$\field[\affinesemigp]$ is a Cohen--Macaulay ring if and only if (1) $\widetilde{\affinesemigp} = \affinesemigp$ and (2) for every $\setS \subseteq \{ 1,2, \cdots, m\}$ with $\regR_{\setS}\in \regionr{\arngmt}$, $\pi_{\setS}$ is acyclic.
\end{theorem}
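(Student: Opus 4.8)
The plan is to derive \cref{thm:TH_MainThm} from \cref{thm:case_of_non_normal_affine_semigroup}. By that result, $\field[\affinesemigp]$ is Cohen--Macaulay if and only if every grain of $\grainset(\field[\affinesemigp])$ contained in the hole set $\holes(\affinesemigp)$ -- call these the \emph{void grains} $\grain$ -- has $H^{i}\bigl(\Hom_{\field}(\rchaincpx{\crosssection_{\grain}},\field)\bigr)=0$ for all $i\neq\diml$, where $\diml=\dimd=\dim\field[\affinesemigp]$. The whole task is then to translate this vanishing into conditions (1) and (2).

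The first step is to build a dictionary between void grains and the pairs $(\setS,\pi_{\setS})$ in condition (2). For a void grain $\grain$, set $\setS(\grain):=\{\,i\in\{1,\dots,m\}:\facF_{i}\notin D_{\grain}\,\}$; since $D_{\grain}=\{\facF:\grain\subseteq\deg((\field[\affinesemigp])_{\facF})\}$, this is the set of facets through whose localization no degree of $\grain$ is pushed into $\affinesemigp$, and it is independent of the choice of $\veca\in\grain$. I would establish: (a) any $\veca\in\grain$ lies in the open region $\regr_{\setS(\grain)}$ of $\arngmt$, so $\regR_{\setS(\grain)}\in\regionR{\arngmt}\cong\regionr{\arngmt}$ by \cref{pro:localization_cone_structure} and the commutative diagram following it; (b) under the canonical identification $\facelattice(\crosssection)\cong\facelattice(\affinesemigp)$, the chaff $D_{\grain}$ is the order filter generated by the faces that are nonempty intersections of the facets \emph{not} in $\setS(\grain)$ -- it is here that the maximality built into a void pair, combined with the restriction maps of \cref{thm:map_between_sets_of_equiv_classes_of_degree_pairs}, forces $D_{\grain}$ to contain every face lying strictly above the void-pair face attached to $\veca$; and (c) since $\crosssection$ is contractible and $\partial\crosssection$ is a sphere, the long exact sequence of the pair $\bigl(\crosssection,\facelattice(\crosssection)\smallsetminus D_{\grain}\bigr)$ identifies $H^{i}\bigl(\Hom_{\field}(\rchaincpx{\crosssection_{\grain}},\field)\bigr)$, up to a fixed homological shift, with the reduced homology of $\pi_{\setS(\grain)}$ (with $\pi_{\setS}$ identified inside $\facelattice(\affinesemigp)$ exactly as in the statement of the theorem, so that $\pi_{\varnothing}$ is the irrelevant complex $\{\varnothing\}$, whose only nonzero reduced homology sits in degree $-1$). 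Consequently, the chaff cohomology of $\grain$ is concentrated in degree $\diml$ if and only if $\pi_{\setS(\grain)}$ is acyclic. Proving (b) and (c) -- in particular pinning down the shift through the Alexander-type duality on $\partial\crosssection$, and checking that the void-pair filtration of $\holes(\affinesemigp)$ is compatible with this bookkeeping -- is the technical heart of the argument and the step I expect to be hardest; it is the non-normal refinement of the visibility argument in the proof of \cref{thm:case_of_normal_affine_semigroup}.

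Granting the dictionary, both implications follow quickly. By \cref{cor:for_reproving_trung_hoa} and the converse recorded in its proof, condition (1) ($\widetilde{\affinesemigp}=\affinesemigp$) is equivalent to the statement that every void pair has a facet for its face, i.e.\ that $\bigcap_{i=1}^{m}(\affinesemigp-\N\facF_{i})\smallsetminus\affinesemigp=\varnothing$, i.e.\ that $\setS(\grain)\neq\varnothing$ for every void grain $\grain$. Assume (1) and (2): then each void grain $\grain$ has $\setS(\grain)\neq\varnothing$ with $\regR_{\setS(\grain)}\in\regionr{\arngmt}$, so (2) makes $\pi_{\setS(\grain)}$ acyclic, and by (c) the chaff cohomology of $\grain$ is concentrated in degree $\diml$; \cref{thm:case_of_non_normal_affine_semigroup} then gives that $\field[\affinesemigp]$ is Cohen--Macaulay. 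Conversely, suppose $\field[\affinesemigp]$ is Cohen--Macaulay. If (1) failed, there would be a hole in $\bigcap_{i}(\affinesemigp-\N\facF_{i})\smallsetminus\affinesemigp$, hence a void grain $\grain$ with $\setS(\grain)=\varnothing$; its chaff computes the reduced homology of $\pi_{\varnothing}=\{\varnothing\}$, nonzero below degree $\diml$, contradicting \cref{thm:case_of_non_normal_affine_semigroup}, so (1) holds. If (2) failed, there would be $\setS$ with $\regR_{\setS}\in\regionr{\arngmt}$ and $\pi_{\setS}$ non-acyclic; if $\regR_{\setS}=\realize(\affinesemigp-\N\facG)$ for a face $\facG$ then the visibility argument of \cref{thm:case_of_normal_affine_semigroup} makes $\pi_{\setS}$ acyclic, so $\regR_{\setS}$ is not a localization of $\affinesemigp$, and since the regions of $\arngmt$ not of this form are exactly those separated from $\affinesemigp$ by holes -- all regions being localizations of the normalization $\field[\affinesemigp_{\text{sat}}]$ and $\affinesemigp_{\text{sat}}\smallsetminus\affinesemigp=\holes(\affinesemigp)$ -- there is a void grain $\grain$ with $\setS(\grain)=\setS$, whose chaff cohomology is nonzero below degree $\diml$ by (c), again contradicting \cref{thm:case_of_non_normal_affine_semigroup}. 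Thus (1) and (2) both hold.
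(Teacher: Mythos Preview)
Your plan to route everything through \cref{thm:case_of_non_normal_affine_semigroup} (void grains only) is a genuine departure from the paper's argument, and the shared ingredient is exactly your step~(c): both proofs identify the graded piece $\crosssection_{\veca}$ with $\facelattice(\pi_{S})^{c}$ inside $\facelattice(\crosssection)$ and then invoke Alexander duality on the transverse section to trade acyclicity of $\pi_{S}$ for the desired vanishing. The paper, however, does \emph{not} restrict to holes. It works directly with the sets $G_{S}=\bigcap_{i\notin S}(\affinesemigp-\N F_{i})\setminus\bigcup_{j\in S}(\affinesemigp-\N F_{j})$, shows $\crosssection_{\veca}=\facelattice(\pi_{S})^{c}$ for \emph{every} $\veca\in G_{S}$ (hole or not), notes that degrees outside $\bigcup_{i}(\affinesemigp-\N F_{i})$ see only $\crosssection_{\veca}=\{\affinesemigp\}$, and concludes via \cref{thm:Cohen--Macaulayness} rather than \cref{thm:case_of_non_normal_affine_semigroup}. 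For the necessity of~(1) it uses \cref{cor:for_reproving_trung_hoa} to exhibit a degree whose $\crosssection_{\veca}$ is the open interval $\{G:\affinesemigp\supseteq G\supsetneq F\}$ above a non-facet $F$, which carries nonzero lower cohomology.

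Your converse for~(2) has a real gap. You argue by dichotomy: either $\regR_{S}=\realize(\affinesemigp-\N G)$ is a localization cone (whence visibility forces $\pi_{S}$ acyclic), or else $\regR_{S}$ is ``separated from $\affinesemigp$ by holes'' and hence there is a void grain $\grain$ with $S(\grain)=S$. But this dichotomy already fails for normal semigroups: in \cref{ex:hplane_arngmt}(\ref{enum:3d_hplane_arngmt}) the semigroup is normal (so $\holes(\affinesemigp)=\varnothing$ and there are no void grains whatsoever), yet $\regionR{\arngmt}\supsetneq\realize(\catname{Cat}_{\affinesemigp})$, so there exist cumulative regions $\regR_{S}$ that are not localization cones. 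More to the point, once you funnel the argument through \cref{thm:case_of_non_normal_affine_semigroup}, the only index sets you can say anything about are those of the form $S(\grain)$ for a void grain $\grain$; nothing in your outline produces, from a non-acyclic $\pi_{S}$, a \emph{hole} $\veca$ whose chaff realizes $\facelattice(\pi_{S})^{c}$. The paper avoids the issue entirely by choosing $\veca\in G_{S}$, which need not be a hole, so that both directions of the equivalence run through the same identification $\crosssection_{\veca}=\facelattice(\pi_{S})^{c}$.
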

\begin{proof}
Note that for any $\veca \in \degs(\field[\affinesemigp])
\smallsetminus \bigcup_{i=1}^{m}\deg\left(\affinesemigp
  -\N\facF_{i}\right)$, $\crosssection_{\veca}=\{Q\}$, which therefore
only contributes to the $(\dim Q)$-th local cohomology. Thus, to prove the conditions above imply Cohen--Macaulayness, it suffices to show that for any $\veca \in \bigcup_{i=1}^{m}\affinesemigp -\N\facF_{i}$, $\rchaincpx{\crosssection_{\veca}}$ is exact. Since $G_{\setS}$ partitions $\bigcup\deg(\field[\affinesemigp])$, assume $\veca \in G_{\setS}$ for some proper subset of $\{ 1,2,\cdots, m\}$. Then, for any $\facF \in \facelattice(\pi_{\setS})^{c}$, $\facF \not\subset \facF_{i}$ for any $i \in \setS$, since $\facF_{i} \in \pi_{\setS}$. Thus, $\facF = \bigcap_{i \in J}F_{i}$ for some $J\subset \{1,2,\cdots, m\}\minus \setS$ implies that $\affinesemigp-\N\facF$ contains $\veca$. Conversely, for any face $\facG$ of $\bigcap_{i \in \setS}\facF_{i}$, $\veca \not\in \affinesemigp-\N\facG$. Hence $\crosssection_{\veca}=\facelattice(\pi_{\setS})^{c}$ by identifying $\facelattice(\pi_{\setS})$ as a subset of $\facelattice(\affinesemigp).$ In this identification, $\pi_{\setS}$ is isomorphic to a polyhedral subcomplex of the transverse section $\crosssection$ of $\realize\affinesemigp$. Hence, the complements $\facelattice(\pi_{\setS})^{c}$ form a polyhedral subcomplex of the dual polytope $\crosssection^{\text{dual}}$. Apply Alexander duality to conclude that $\crosssection_{\veca}$ is acyclic. This proves that $\field[\affinesemigp]$ is Cohen--Macaulay in accordance with~\cref{thm:Cohen--Macaulayness}.

Conversely, suppose $\pi_{S}$ is not acyclic. Pick $\veca \in
\grain_{\setS}$ such that $\crosssection_{\veca}=
\facelattice(\pi_{\setS})^{c}$. Now Alexander duality ensures that
$\rchaincpx{\facelattice(\pi_{\setS})^{c}}$ has nontrivial cohomology
at $i$-th index which is less than $(\dim \Q)$. Also, if $Q' \neq
Q$,~\cref{cor:for_reproving_trung_hoa} gives a void pair
$(\vecb,\facF)$ with $\dim\facF\leq \dim Q-2$. Let $\setS$ be a set of
indices of hyperplanes containing
$\facF$.By~\cref{pro:localization_cone_structure} there exists $\veca
\in \left(\vecb+\N(\facF\cup(-\facF))\right) \cap
\regr_{\setS}$. Hence, $\crosssection_{\veca}=(Q/\facF) \smallsetminus
\facF := \{\facG \in \facelattice(\affinesemigp)\mid Q \supseteq \facG
\supsetneq \facF \}$ is combinatorially equivalent to the polytope
$(Q/\facF)^{\operatorname{dual}}$ without its relative
interior. Hence, $(\dim\facF)$-th homology of $\crosssection_{\veca}$
is nonzero, as is the $(\dim Q-\dim\facF)$-th local cohomology. Thus,
the $\veca$-graded part of the Ishida complex admits nonzero local
cohomology with an index less than $\dim Q$, indicating that the
semigroup ring is not Cohen--Macaulay.
\end{proof}

\begin{example}[Continuation of \cref{ex:hochster}]
\label{ex:cmness}
\
Both cases are not Cohen--Macaulay due to the presence of nonzero 0-th local cohomology.
\end{example}
\begin{example}[Examples of non-normal affine semigroup rings]
\label{ex:nonnormal_cmness}
\
\begin{enumerate}[leftmargin=*]
\item (A 3-dimensional non-Cohen--Macaulay affine semigroup ring)
\label{enum:3d_nonnormal_ex_degp_top}
Let $\affinesemigp:=\N\left[\begin{smallmatrix} 1& 1 & 1 & 1 & 1 \\ 0
    & 0 & 0 & 1 & 1 \\ 0 & 1 & -2 & 0 & 1 \end{smallmatrix}\right].$
Index the rays, facets, and hyperplanes as follows
\[
\resizebox{\linewidth}{!}{
\begin{tabular}{cccc}
$\langle\veca_{1}\rangle := \left\langle\left[\begin{smallmatrix} 1 \\ 0 \\ 1\end{smallmatrix}\right]\right\rangle $ & 
$\langle\veca_{2}\rangle:= \left\langle\left[\begin{smallmatrix} 1 \\ 0 \\ -2\end{smallmatrix}\right]\right\rangle $ &
$\langle\veca_{3}\rangle:= \left\langle\left[\begin{smallmatrix} 1 \\ 1 \\ 0\end{smallmatrix}\right]\right\rangle$ & 
$\langle\veca_{4}\rangle:= \left\langle\left[\begin{smallmatrix} 1 \\ 1 \\ 1\end{smallmatrix}\right]\right\rangle$ \\
$\facF_{1}:= \langle \veca_{1},\veca_{2} \rangle$ &
$\facF_{2}:= \langle \veca_{2},\veca_{3} \rangle$ &
$\facF_{3}:= \langle \veca_{3},\veca_{4} \rangle$ & 
$\facF_{4}:= \langle \veca_{4},\veca_{1} \rangle $\\
$\hplane_{1} := \{y=0 \}$ & 
$\hplane_{2}:= \{ 2x-2y+z=0 \} $ &
$\hplane_{3} := \{x-y=0\} $ & 
$\hplane_{4} := \{x-z=0\}$
\end{tabular}}
\]
The Hasse diagram for the region of posets is identical to that
in~\cref{fig:3d_hasse_diagram}. Moreover, the set of holes of the
affine semigroup, $\holes(\affinesemigp)$, is
$\left[\begin{smallmatrix} 1 \\ 0 \\
    -1 \end{smallmatrix}\right]+\N\left[\begin{smallmatrix} 1 \\ 0 \\
    -2 \end{smallmatrix}\right]$ which lies in the $xz$-plane. This is
because $\left[\begin{smallmatrix} 2 \\ 1 \\
    -1 \end{smallmatrix}\right] = \left[\begin{smallmatrix} 1 \\ 1 \\
    1 \end{smallmatrix}\right]+ \left[\begin{smallmatrix} 1 \\ 0 \\
    -2 \end{smallmatrix}\right]$ acts as a barrier to the spread of
holes in the relative interior of $\affinesemigp$. According
to~\cref{thm:map_between_sets_of_equiv_classes_of_degree_pairs},
$\affinesemigp$ and $\affinesemigp -\N\veca_{2}$ are the only
non-normal affine semigroups that arise as a result of
localization. Thus, the space of holes $\voidm(\affinesemigp)$ equals
$ \left[\begin{smallmatrix} 1 \\ 1 \\ 1 \end{smallmatrix}\right]+
\Z\left[\begin{smallmatrix} 1 \\ 0 \\ -2 \end{smallmatrix}\right]$,
which is consistent with the set of holes of
$\affinesemigp-\N\veca_{2}$. Using~\cref{fig:3d_hasse_diagram},
$\voidm(\affinesemigp)$ is decomposed into void grains below. 
\[
\begin{tabular}{|c|c|c|}
\toprule
$\holes_{1,2,3,4}$ & $\holes_{1,2,3}$ & $\holes_{1,2}$ \\
\midrule
$\left[\begin{smallmatrix} 1 \\ 1 \\ 1 \end{smallmatrix}\right]+ \N\left[\begin{smallmatrix} 1 \\ 0 \\ -2 \end{smallmatrix}\right]$ & $\left[\begin{smallmatrix} 0 \\ 1 \\ -1 \end{smallmatrix}\right]$ & $\left[\begin{smallmatrix} 1 \\ 1 \\ 1 \end{smallmatrix}\right]+ \Z_{\leq -2}\left[\begin{smallmatrix} 1 \\ 0 \\ -2 \end{smallmatrix}\right]$ \\
\bottomrule
\end{tabular}
\]
$\holes_{1,2,3,4}= \bigcap_{\smalli=1,3,4}(\affinesemigp-\N\veca_{\smalli}) \smallsetminus \affinesemigp$ and $\holes_{1,2} = (\affinesemigp-\N\facF_{1}) \cap( \affinesemigp-\N\facF_{2}) \smallsetminus (\affinesemigp-\N\veca_{2})$, $\holes_{1,2,3,4}$ and $\holes_{1,2}$ form grains. On the other hand,
\[
\holes_{1,2,3} \subsetneq \grain:=(\affinesemigp-\N\veca_{3}) \smallsetminus \left( \affinesemigp \cup \left(\affinesemigp-\N\veca_{2} \right)\cup\holes_{1,2,3,4}\right)
\]
shows that $\holes_{1,2,3}$ is a part of $\grain$, whereas the remaining elements of $\grain$ come from the region $\regr_{2,3}$. Thus,
\begin{align*}
\grainset(\affinesemigp/\idI) &= \{\holes_{1,2,3,4}, \holes_{1,2}, \affinesemigp, (\affinesemigp-\N\veca_{2})\cap\regr_{1,2}, \left(\regr_{2,3}\cap\left(\affinesemigp-\N\veca_{3}\right)\right)\cup\holes_{1,2,3} \} \\
&  \cup \{\Z^{3}\cap \regr_{\setS} \mid \setS \in \text{index}(\regionr{\arngmt})  \text{ such that }\setS \neq \{1,2\}, \{2,3\}, \{1,2,3,4\} \},
\end{align*}
where $\text{index}(\regionr{\arngmt})$ denotes the set of all indices of elements of $\regionr{\arngmt}$. Hence, it suffices to check whether chaffs 
\[
D_{\holes_{1,2,3,4}}=\{ \veca_{i}, \facF_{j},\affinesemigp
\}_{\substack{i=1,3,4 \\ j=1,2,3,4}} \text{ and }D_{\holes_{1,2}}=\{
\facF_{1},\facF_{2},\affinesemigp \}
\]
have vanishing homology. Since $D_{\holes_{1,2}}$ produces a non-zero second homology, the affine semigroup ring $\field[\affinesemigp]$ is not Cohen--Macaulay.
\item (4-dimensional non-normal Cohen--Macaulay affine semigroup
  ring~\cite{MR2508056}*{Exercise 6.4}) Assume $P$ is a simplex with
  the vertices $(0,0,0),(2,0,0),(0,3,0),$ and $(0,0,5)$. $\Z^{3}$ is
  the smallest lattice that contains vertices of $P$. The
  \emph{polytopal affine monoid} $M(P)$~\cite{MR2508056}*{Definition
    2.18} associated $P$ is the affine semigroup
  $\affinesemigp:=\N\genset$ where $\genset =\{(1,\veca): \veca \in
  \Z^{3}\cap P\}$. In this example,
\[
  \genset =\left[ \begin{smallmatrix}\veca_{1}& \veca_{2}&\cdots &
      \veca_{18} \end{smallmatrix}\right]
  =\left[\begin{smallmatrix}1&1&1&1&1&1&1&1&1&1&1&1&1&1&1&1&1&1 \\
      0&0&0&0&0&0&0&0&0&0&0&0&0&1&1&1&1&2 \\
      0&0&0&0&0&0&1&1&1&1&2&2&3&0&0&0&1&0 \\
      0&1&2&3&4&5&0&1&2&3&0&1&0&0&1&2&0&0 \end{smallmatrix}\right].
\]
$\realize\affinesemigp$ is a simplicial polyhedron with tetrahedral
transverse section. We index its facets and hyperplanes as follows
\[
\begin{tabular}{cccc}
$\facF_{1}:= \langle \veca_{6},\veca_{13}, \veca_{18} \rangle$ &
$\facF_{2}:= \langle \veca_{1},\veca_{7}, \veca_{11},\veca_{13},\veca_{14},\veca_{17},\veca_{18} \rangle$ \\
$\facF_{3}:= \langle \veca_{1}, \cdots, \veca_{6}, \veca_{14},\cdots ,\veca_{16},\veca_{18} \rangle$ & 
$\facF_{4}:= \langle \veca_{1},\cdots,\veca_{13} \rangle $\\
$\hplane_{1} := \{(30,-15,-10,-6) \cdot \vect = 0 \}$ & 
$\hplane_{2}:= \{ (0,0,0,1) \cdot \vect = 0 \} $ \\
$\hplane_{3} := \{(0,0,1,0) \cdot \vect = 0\} $ &
$\hplane_{4} := \{(0,1,0,0) \cdot \vect = 0\}$
\end{tabular}
\]
Since the transverse section $\crosssection$ is a tetrahedron, we can
index faces as intersections of facets uniquely. For example, each of the rays can be denoted as follows.
\begin{align*}
\facF_{2,3,4} &:= \left\langle\veca_{1}\right\rangle  & \facF_{1,3,4}& := \left\langle\veca_{6}\right\rangle   & \facF_{1,2,4} &:= \left\langle\veca_{13}\right\rangle  & \facF_{1,2,3}& := \left\langle\veca_{18}\right\rangle.
\end{align*}
According to the HASE package~\cite{KLRY18}, the affine semigroup $\affinesemigp=\N\genset$ contains holes $\holes(\affinesemigp):=\left[\begin{smallmatrix} 2 & 1 & 2& 4 \end{smallmatrix}\right]^{t}+\N\facF_{1}$. Thus, the space of holes $\voidm(\affinesemigp)$ is $\left[\begin{smallmatrix} 2 & 1 & 2& 4 \end{smallmatrix}\right]^{t}+\Z\facF_{1}$. We can decompose $\voidm(\affinesemigp)$ into void grains using the hyperplane arrangement as follows:
\[
\holes_{\setS}:=\left\{\left[\begin{smallmatrix} 2+a+b+c \\ 1+2a \\ 2+3b \\ 4+5c \end{smallmatrix}\right]\mid a \in\text{sgn}_{4}(S), b \in \text{sgn}_{3}(S), c \in \text{sgn}_{2}(S)\right\} 
\]
where $\text{sgn}_{i}(S):= \begin{cases} \N & \text{ if } i \in \setS \\ \Z \smallsetminus \N & \text{ if } i \not\in \setS \end{cases}$ for all $\{1\} \subseteq \setS \subseteq \{ 1,2,3,4\}$. There are two types of grains indexed by $2^{\{1,2,3,4\}}$ that emerge from iterative intersections of affine semigroups. For every $\setS$ with $\{1\} \subseteq \setS \subseteq \{ 1,2,3,4\}$, the union $\grain_{\setS \smallsetminus \{1\}}:=\holes_{\setS} \cup\left(\regr_{\setS \smallsetminus \{ 1\}} \cap \left(\affinesemigp - \N\facF_{\setS\smallsetminus \{1\}}\right)\right)$ generates a grain of the first type, whereas $\grain_{\setS}:=\regr_{\setS}\cap\left(\affinesemigp-\N\facF_{\setS}\right)$ generates a grain of the second type. Since there is no grain composed entirely of holes, all chaffs have vanishing homology except the top dimension. Therefore, $\field[\affinesemigp]$ is Cohen--Macaulay.
\end{enumerate}
\end{example}

\bibliographystyle{plain}
\bibliography{CMness}
\end{document}